\newcommand{\Ric}{\mathrm{Ric}}
\newcommand{\Vol}{\mathrm{Vol}}
\newcommand{\inj}{\mathrm{inj}}
\newcommand{\sq}{\backslash}
\newcommand{\eps}{\epsilon}
\newcommand{\Id}{\text{Id}}
\newcommand{\dist}{\mathrm{dist}}
\newcommand{\spheren}{{\mathbb{S}^n}}
\newcommand{\sphere}{{\mathbb{S}}}
\newcommand{\inte}{\mathrm{int}}
\newtheorem{thm}{Theorem}[section]
\newtheorem{fact}{Fact}[section]
\newtheorem{prop}{Proposition}[section]
\newtheorem{coro}{Corollary}[section]
\newtheorem{lmm}{Lemma}[section]
\newtheorem{pbm}{Problem}[section]
\newtheorem*{inductionhyp}{Inductive Hypothesis}
\theoremstyle{remark} 
\newtheorem*{rmk}{Remark}
\title{Examples of Ricci limit spaces with infinite holes}
\author{Shengxuan Zhou}
\address{Beijing International Center for Mathematical Research\\
Peking University\\
Beijing\\ 100871\\ China}
\email{zhoushx19@pku.edu.cn}
\thanks{}
\keywords{}
\date{}
\dedicatory{}
\begin{document}

\begin{abstract}
Let $n\geq 3$, $\lambda \in \mathbb{R} $, and $(X,h)$ be an $n$-dimensional smooth complete Riemannian manifold with $\Ric_h > \lambda $. 

In this paper, we construct, for each given $\epsilon >0$, a sequence of $(n+2)$-dimensional manifolds $(M_{i} ,g_i ) \stackrel{GH}{\longrightarrow} (X_\epsilon ,d_\epsilon ) $ with $\Ric_{g_i} > \lambda $, such that $d_{GH} (X,X_\epsilon ) \leq \epsilon $, and $X_\epsilon $ is homeomorphic to the space obtained by removing an infinite number of balls from $X$. Hence $X_\epsilon $ has dense boundary with an infinite number of connected components. Moreover, $X_\epsilon $ has no open subset which is topologically a manifold. This generalizes Hupp-Naber-Wang's result \cite{hnw1} from $4$-dimensional case to the case of dimension $n\geq 3$.

Our construction differs from that of Hupp-Naber-Wang. In their approach, Hupp-Naber-Wang considered doing an infinite number of blow-ups on the local complex surface structure of $X$, thus relying on the $4$-dimensional condition. However, our approach avoids the use of local complex surface structure of $X$, allowing us to construct in the general case of dimensions greater than or equal to $3$. As a corollary, we provide a solution to an open problem posed by Naber \cite[Open Problem 3.4]{naber1} in the $3$-dimensional case.
\end{abstract}
\maketitle
\tableofcontents

\section{Introduction}
\label{introduction}

In the theory of Gromov-Hausdorff limits, a significant focus lies in describing the geometry and topology of measured Gromov-Hausdorff limit spaces as following:
\begin{eqnarray}
    (M_{i} ,g_i ,\nu_i ,p_i ) \stackrel{GH}{\longrightarrow} (X^n ,d ,\nu ,p ) ,\;\; \Ric_{g_i} \geq -\lambda ,\;\; \nu_i = \frac{1}{\Vol (B_1 (p_i)) } d\nu_{g_i },
\end{eqnarray}
where $n\in\mathbb{N}$ is the rectifiable dimension of $(X ,d ,\nu )$, and all $ M_i $ have the same dimension. Note that Colding-Naber \cite{coldingnaber1} proved that the limit $(X ,d ,\nu )$ is $n$-rectifiable for a unique $n $.

In the non-collapsed case, which is to say $\Vol (B_1 (p_i)) \geq v >0 $, the works of Colding \cite[Theorem 0.1]{colding1} and Cheeger-Colding \cite[Theorem 5.9]{chco3} show that the measure $ \nu = c \mathcal{H}^n $ for some $c>0$, and the set of $n$-regular points $R_n (X) \subset X$ is a subset of full-$\nu $ measure, where $\mathcal{H}^n $ is the $n$-dimensional Hausdorff measure on $(X^n ,d )$, and a point $x\in X$ is called a $k$-regular point if and only if every tangent cone of $X$ at $x$ is isometric to $\mathbb{R}^k$. Then a Reifenberg type result of Cheeger-Colding \cite[Theorem 5.14]{chco3} shows that the limit space has a manifold structure away from a codimension $2$ set, which is the complement of an open neighborhood of $R_n (X)$. More recently, the work of Cheeger-Jiang-Naber \cite[Theorem 1.14]{jcwsjan1} was able to prove that in this case the codimension $2$ Hausdorff measure of the subset we removed can be finite. 

In the collapsed case, which is to say $\lim\limits_{i\to\infty} \Vol (B_1 (p_i)) = 0 $, Colding-Naber \cite{coldingnaber1} proved that the set of $n$-regular points $R_n (X) \subset X$ is a subset of full-$\nu $ measure. However, Pan-Wei's example \cite{panwei1} has shown that the Hausdorff dimension of $X\sq R_n (X) $ can be greater than the Hausdorff dimension of $ R_n (X) $. By using their almost $n$-splitting map, Cheeger-Colding \cite[Theorem 3.26]{chco5} proved that there exists a subset $U$ of $R_n (X)$ such that $\nu ( U ) >0 $ and $U$ is bi-Lipschitz to a subset of $\mathbb{R}^n$. However, in general, $U$ is not an open subset of $X$.

A natural question arises: if $x\in R_n (X)$, does there exist an open neighborhood of $x$ homeomorphic to an open subset of $\mathbb{R}^n$, similar to the non-collapsed case? This leads to the following famous problem:

\begin{pbm}[{\cite[Open Problem 3.4]{naber1}}]
\label{naberopenpbm}
Let $(M_{i} ,g_i ,\nu_i ,p_i ) \stackrel{GH}{\longrightarrow} (X^n ,d ,\nu ,p ) $ with $\Ric_{g_i} \geq -\lambda $ and $\Vol (B_1 (p_i)) \to 0 $, then is there an open subset of full-$\nu $ measure $R(X)\subset X$ which is homeomorphic to an $n$-manifold?
\end{pbm}

It's worth noting that for $n=1$, the answer of Problem \ref{naberopenpbm} is affirmative \cite{honda1, lina1}.

Recently, Hupp-Naber-Wang \cite{hnw1} demonstrated that for any $4$-dimensional manifold with Ricci lower bound, there exists a sequence of $6$-dimensional manifolds with a uniform Ricci lower bound such that the Gromov-Hausdorff limit is close to the given $4$-dimensional manifold, but the limit space has no open subset which is topologically a manifold. In topological sense, the Ricci limit space they constructed is the blowing up of a dense subset in the given $4$-dimensional manifold. As a corollary, by considering the product of a $4$-dimensional manifold and an $(n-4)$-dimensional manifold, they provided a negative answer of Problem \ref{naberopenpbm} when $n\geq 4$.

However, Problem \ref{naberopenpbm} is still open when $n=2$ or $3$. 

On one hand, the blow-up surgery used by Hupp-Naber-Wang requires local complex surface structures and cannot be applied to $2$ or $3$-manifolds. On the other hand, Simon-Topping's result \cite{st1}, about the manifold structure of $3$-dimensional non-collapsing Ricci limit spaces indicates that there may be some additional topological regularity in the $3$-dimensional case compared to higher dimensions. Here we pay special attention to the $3$-dimensional case, because in many cases people tend to focus on the geometry of $6$-manifolds collapse to a $3$-space, such as the famous metric Strominger-Yau-Zaslow conjecture \cite{yl1}.

In this paper, we construct a sequence of $(n+2)$-dimensional manifolds for any given $n$-dimensional manifolds $(n\geq 3)$ with Ricci lower bound such that the limit space has no open subset which is a topological manifold. The following theorem is our main theorem in this paper.

\begin{thm}\label{thmnonmfd}
Let $(X , h)$ be an $n$-dimensional smooth complete boundary free Riemannian manifold with $n\geq 3$ and $\Ric_h > \lambda \in\mathbb{R} $. Then for each $\epsilon >0$, there exists a metric space $(X_\epsilon ,d_\epsilon )$ such that
\begin{enumerate}[(a).]
\item $d_{GH} (X,X_\epsilon ) \leq \epsilon $,  \\[-3mm]
\item $(X_\epsilon ,d_\epsilon ) $ is $n$-rectifiable, \\[-3mm]
\item There exists a sequence of $(n+2)$-dimensional boundary free complete manifolds $(M_{i} ,g_i ) \stackrel{GH}{\longrightarrow} (X_\epsilon ,d_\epsilon )$ with $\Ric_{g_i} > \lambda $,  \\[-3mm]
\item For any open subset $U$ of $X_\epsilon $, the homology group $H_{n-1} (U;\mathbb{Z})$ is infinitely generated. Consequently, every open subset of $X_\epsilon $ is not a topological manifold.
\end{enumerate}
\end{thm}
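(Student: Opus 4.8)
The plan is to realise $X_\epsilon$, up to homeomorphism, as $X$ with a countable dense family of disjoint small geodesic balls removed, and to obtain the $M_i$ by performing, on $X\times\mathbb{S}^2$, a ``drilling'' surgery near each removed ball while letting the $\mathbb{S}^2$-factor collapse. Concretely, fix a countable dense set $\{x_j\}_{j\ge1}\subset X$ and radii $\rho_j>0$ so small that the closed balls $\overline{B_{2\rho_j}(x_j)}$ are pairwise disjoint, each $B_{2\rho_j}(x_j)$ is $C^2$-close to a Euclidean ball, and the total effect of the modifications below is $\le\epsilon$ in the Gromov--Hausdorff sense. Topologically $X_\epsilon$ will be $X\setminus\bigsqcup_j B_{\rho_j}(x_j)$, with collars attached (which do not change the homeomorphism type). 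Since $\mathbb{S}^{n-1}\times\mathbb{S}^2$ bounds $\mathbb{S}^{n-1}\times\mathbb{D}^{3}$, I define
\[
M_i \;=\; \Big(\big(X\setminus{\textstyle\bigsqcup_j}B_{\rho_j}(x_j)\big)\times\mathbb{S}^{2}\Big)\ \bigcup\ {\textstyle\bigsqcup_j}\,\big(\mathbb{S}^{n-1}\times\mathbb{D}^{3}\big),
\]
glued along the common boundaries $\partial B_{\rho_j}(x_j)\times\mathbb{S}^2\cong\mathbb{S}^{n-1}\times\mathbb{S}^2$; i.e.\ $M_i$ is the surgery on $X\times\mathbb{S}^2$ along the spheres $\{x_j\}\times\mathbb{S}^2$. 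Each $M_i$ is a smooth boundary-free complete $(n+2)$-manifold.

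\textbf{The metrics and the main obstacle.} On $M_i$ let the metric be $h\oplus\epsilon_i^2 g_{\mathbb{S}^2}$ outside $\bigcup_j B_{2\rho_j}(x_j)\times\mathbb{S}^2$, with $\epsilon_i\to0$. Inside each $B_{2\rho_j}(x_j)\times\mathbb{S}^2$ together with the adjacent plug I use a rotationally symmetric ``neck'' metric $dr^2+a_j(r)^2 g_{\mathbb{S}^{n-1}}+\phi_{j,i}(r)^2 g_{\mathbb{S}^2}$, interpolating in the annular collar to the honest geometry of the geodesic spheres $\partial B_r(x_j)$ of $(X,h)$. The conditions $\phi_{j,i}(0)=0,\ \phi_{j,i}'(0)=1$ and $a_j(0)=\delta_j,\ a_j'(0)=0$ make $g_i$ smooth along the core sphere $\{r=0\}\times\mathbb{S}^{n-1}$; I take $a_j$ independent of $i$ and $\phi_{j,i}\le C\epsilon_i\to0$, so that the plug and collar collapse as $i\to\infty$ onto a collar $[0,c_j]\times\mathbb{S}^{n-1}$ whose boundary is the sphere of size $\delta_j$. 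The heart of the construction --- and the step I expect to be the main obstacle --- is to choose the profiles $a_j,\phi_{j,i}$ so that $\Ric_{g_i}>\lambda$ holds on \emph{all} of $M_i$, with the \emph{same} $\lambda$ as in the hypothesis and uniformly as $i\to\infty$. The difficulty is structural: ``closing up'' a hole forces $a_j$ to be convex ($a_j''>0$) somewhere, which contributes negative radial Ricci $-(n-1)a_j''/a_j$; one must swamp this using the large positive contributions $-2\phi_{j,i}''/\phi_{j,i}$ from a suitably concave, fast-collapsing fiber profile and $(n-2)/\delta_j^{\,2}$ from the thin core sphere, matching the various scales ($\delta_j$, the collar length $c_j$, the decay rate of $\phi_{j,i}$, and the ``room'' $\Ric_h-\lambda$) so that every eigenvalue of the Ricci form of the doubly-warped neck, and of its $C^\infty$ gluing to $(X,h)\times\mathbb{S}^2$, stays above $\lambda$; when $\lambda>0$ one also uses compactness of $X$, hence $\inf_X\Ric_h-\lambda>0$. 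Granting this, the standard Ricci formulas for doubly-warped products reduce everything to elementary ODE estimates.

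\textbf{Convergence and properties (a)--(c).} With the $g_i$ in hand, $(M_i,g_i)\stackrel{GH}{\longrightarrow}(X_\epsilon,d_\epsilon)$, where $X_\epsilon$ is the $i$-independent limit base: $(X,h)$ outside the balls, glued to the collars $[0,c_j]\times\mathbb{S}^{n-1}$; the only collapsing direction is the $\mathbb{S}^2$ of radius $\to0$. This proves (c); taking $\rho_j,\delta_j,c_j$ small yields $d_{GH}(X,X_\epsilon)\le\epsilon$, proving (a). Finally $X_\epsilon$ is, up to a set of measure zero, the disjoint union of the smooth Riemannian manifold-with-boundary $X\setminus\bigcup_j B_{2\rho_j}(x_j)$ and countably many collars, hence $n$-rectifiable; uniqueness of the rectifiable dimension (Colding--Naber) together with the existence of a tangent $\mathbb{R}^n$ at a.e.\ point pins it at $n$, giving (b).

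\textbf{Topology (d).} Up to homeomorphism $X_\epsilon=X\setminus\bigsqcup_j B_{\rho_j}(x_j)$; write $\Sigma_j\cong\mathbb{S}^{n-1}$ for the boundary sphere of the $j$-th hole. Let $U\subseteq X_\epsilon$ be nonempty and open, say $U=\widetilde U\cap X_\epsilon$ with $\widetilde U\subseteq X$ open; then $\widetilde U$ contains a coordinate ball $V$, which by density of $\{x_j\}$ contains $x_j$ for infinitely many $j$, and for those $j$ with $x_j$ deep enough in $V$ one has $\Sigma_j\subseteq U$; call this infinite index set $J$. I claim the classes $\{[\Sigma_j]\}_{j\in J}$ are linearly independent in $H_{n-1}(U;\mathbb{Z})$: mapping a neighbourhood $V\cap X_\epsilon\subseteq U$ into $V$ with only finitely many chosen holes $j_1,\dots,j_m$ removed identifies the corresponding $[\Sigma_{j_l}]$ with a basis of $H_{n-1}\big(\mathbb{R}^n\setminus\bigsqcup_{l=1}^m B_{j_l}\big)\cong\mathbb{Z}^m$, and a Mayer--Vietoris argument (filling the holes of $U$ in one at a time; each $[\Sigma_j]$ has infinite order and the subgroups $\mathbb{Z}[\Sigma_j]$ form a direct sum) propagates this independence back to $H_{n-1}(U;\mathbb{Z})$. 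Hence $H_{n-1}(U;\mathbb{Z})$ contains $\bigoplus_{j\in J}\mathbb{Z}$ and is infinitely generated. In particular $H_{n-1}(U;\mathbb{Z})\ne0$ for every open $U$. If some open $U$ were a topological manifold it would (necessarily being $n$-dimensional) contain an open set homeomorphic to $\mathbb{R}^n$ or to $\mathbb{R}^{n-1}\times[0,\infty)$, both of which are contractible and so have vanishing $H_{n-1}$; such a set is open in $X_\epsilon$, contradicting the previous sentence. Therefore no open subset of $X_\epsilon$ is a topological manifold, completing (d).
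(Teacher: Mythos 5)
Your topological surgery — replace each $B^n_{\rho_j}(x_j)\times\mathbb{S}^2$ by $\mathbb{S}^{n-1}\times\mathbb{D}^3$ and let the $\mathbb{S}^2$-fibers collapse — is exactly the surgery the paper uses (modulo the paper's loose $\mathbb{S}^n$-vs-$\mathbb{S}^{n-1}$ bookkeeping). The homology argument for (d) is also the same Mayer--Vietoris-plus-map-to-a-finite-hole-space idea that the paper carries out. But there are two genuine gaps, the first of which is fatal as written.

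\textbf{The manifolds $M_i$ are not manifolds.} You fix a countable \emph{dense} family $\{x_j\}$ and then set $M_i$ equal to $X\times\mathbb{S}^2$ surgered simultaneously along \emph{all} of the $\{x_j\}\times\mathbb{S}^2$, changing only the metric with $i$. Because $\{x_j\}$ is dense, the removed balls accumulate at every point of $X$; at an accumulation point $x\notin\bigcup_j\overline{B_{\rho_j}(x_j)}$ the resulting space has no Euclidean chart, since every neighborhood contains infinitely many plugs. So ``each $M_i$ is a smooth boundary-free complete $(n+2)$-manifold'' is false, and (c) fails. The same problem breaks the decomposition in your argument for (b): $X\setminus\bigcup_jB_{2\rho_j}(x_j)$ is \emph{not} a smooth manifold with boundary. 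The paper resolves this in the only way it can be resolved: it constructs a \emph{sequence} $(M_i,g_i)$ in which $M_i$ is obtained from $M_{i-1}$ by only \emph{finitely} many additional surgeries (see Proposition~\ref{propnonmfd}, where $\mathcal{W}_i$ has $N_i<\infty$ components), with $N_i\to\infty$ and the $\mathbb{S}^2$-fiber shrinking on the way. Each $M_i$ is then an honest smooth manifold, and the density of holes appears only in the Gromov--Hausdorff limit. You must therefore reorganize your construction into an induction that adds finitely many holes at a time.

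\textbf{The Ricci bound is asserted, not established, and the crucial mechanism is missing.} You isolate ``$\Ric_{g_i}>\lambda$ on all of $M_i$'' as the heart of the matter and then say ``Granting this\dots'' The missing ingredient is precisely what makes the gluing possible: before a plug can be attached one must first deform the base metric $g_B$ in a $(1+2\epsilon)$-bi-Lipschitz way so that near the puncture it looks like a cone $dr^2+(1-\epsilon)^2 r^2 g_{\mathbb{S}^{n-1}}$ together with a fiber-collapse profile $\delta r^\alpha$; this is Hupp--Naber--Wang's Lemma~\ref{huppnaberwanglemma}, and the small cone deficit $\epsilon$ and the power-law $r^\alpha$ collapse of the $\mathbb{S}^2$-factor are what supply the positive Ricci that beats the $-(n-1)a''/a$ term in the cap, uniformly in the shrinking parameters. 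Your direct interpolation from $h\oplus\epsilon_i^2 g_{\mathbb{S}^2}$ to a rotationally symmetric plug, with no conical deformation, does not obviously give any room in the Ricci tensor, and there is no argument that the transition region preserves $\Ric>\lambda$ with the same $\lambda$ as in the hypothesis. Without Lemma~\ref{huppnaberwanglemma} (or an equivalent), step (c) is unproved even after the finiteness issue above is repaired.

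\textbf{A smaller point on (d).} The claim that $\{[\Sigma_j]\}_{j\in J}$ are linearly independent in $H_{n-1}(U;\mathbb{Z})$ needs an explicit map from $U$ to a \emph{finite}-hole model that is defined on all of $U$; this is exactly where the paper uses the controlled inclusion maps $I_{i,i-1}$ and passes to a quotient $X/\!\sim$ to control $\mathrm{rank}\,H_n$. Your ``filling the holes of $U$ in one at a time'' hand-waves past the fact that the holes accumulate in $U$, so the map cannot be built hole by hole. Once you adopt the paper's sequential construction, however, the map $X_\epsilon\to X\setminus\mathcal{W}_{i_N}$ is available and your Mayer--Vietoris argument goes through essentially as written.

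In summary: the underlying surgery and the homological obstruction are the same as in the paper, but replacing the paper's inductive sequence of finitely-surgered manifolds by a single infinitely-surgered ``manifold'' breaks statement (c) and taints (b), and the proof that $\Ric_{g_i}>\lambda$ can actually be arranged — the real content of the theorem — is left entirely as a claim.
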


Topologically, the main difference between our construction and that of Hupp-Naber-Wang is that we use the plumbing gluing surgery to puncture holes on the manifold the given manifold $X$, instead using the blow-up surgery on the local complex structure of $X$. By using this surgery and careful analysis near the boundaries of the balls we removed, one can construct for the general case of dimension $n\geq 3$. Thus for manifolds of dimension greater or equal to $5$, our method can also construct Ricci limit spaces that approximate any given manifold with dense homology, without necessitating a product structure.

\begin{rmk}
Now we can refine Hupp-Naber-Wang's answer to Problem \ref{naberopenpbm}. Specifically, the following facts are now known:
\begin{itemize}
\item When $n=1$, the Ricci limit space of rectifiable dimension $n$ is always a topological manifold with boundary. For more details, see Honda \cite{honda1} or Chen \cite{honda1}.
\item When $n\geq 3$, Theorem \ref{thmnonmfd} implies that one can approximate any $n$-dimensional Riemannian manifold with lower Ricci bound by $n$-rectifiable Ricci limit spaces with bad topology.
\end{itemize}
\end{rmk}

Now we consider the boundary of Ricci limit spaces. Let $(X^n ,d ,\nu ,p )$ be a Ricci limit space. Then for $0\leq k\leq n-1$, one can define
\begin{eqnarray}
\mathcal{S}^{k} = \bigg\lbrace x\in X: \textrm{ no tangent cone at $x$ splits off $\mathbb{R}^{k+1}$ } \bigg\rbrace.
\end{eqnarray}
Now we can define the reduced boundary of $X$ by
\begin{eqnarray}
\partial^* X = \mathcal{S}^{n-1} \sq \mathcal{S}^{n-2}.
\end{eqnarray}
There are two definitions of boundary. The definition of boundary given by De Philippis-Gigli \cite{dephi1}, $\partial_{\rm DPG} X$, is just the closure of the reduced boundary. The other definition of a boundary, provided by Kapovitch-Mondino \cite{km1}, $\partial_{\rm KM} X$, is defined by induction on tangent cones. See also \cite{bruenabersemola1} for more details about boundaries.

The motivation for studying the geometry of boundaries comes from the research about $\mathrm{RCD}$ spaces.  Note that Cheeger-Colding \cite[Theorem 6.1]{chco3} proved that the boundary of non-collapsed Gromov-Hausdorff limits of boundary free manifolds is always empty. As a corollary of Theorem \ref{thmnonmfd}, our construction also gives examples where boundaries are both dense and contain an infinite number of connected components.

\begin{coro}\label{coroboundary}
Let $n\geq 3$. Then there exists a sequence of $(n+2)$-dimensional boundary free complete manifolds $(M_{i} ,g_i ) \stackrel{GH}{\longrightarrow} (X^n_\infty ,d_\infty )$ with $\Ric_{g_i} \geq -1 $, such that $\partial_{KM} X_\infty $ is both dense and contains an infinite number of connected components. Moreover, $\partial_{\rm DPG} X_\infty = X_\infty $.
\end{coro}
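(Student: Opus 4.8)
The plan is to deduce Corollary \ref{coroboundary} from Theorem \ref{thmnonmfd}, applied to a fixed ambient manifold, together with a tangent-cone analysis of the holes produced by the plumbing surgery. For the input, take $(X,h)=\mathbb{R}^n$ with its flat metric, so $\Ric_h=0>-1$, and apply Theorem \ref{thmnonmfd} with $\lambda=-1$ and $\epsilon=1$; set $(X_\infty^n,d_\infty):=(X_\epsilon,d_\epsilon)$. By parts (a)--(c), $X_\infty$ is $n$-rectifiable and there is a sequence of $(n+2)$-dimensional boundary free complete manifolds $(M_i,g_i)\stackrel{GH}{\longrightarrow}X_\infty$ with $\Ric_{g_i}>-1$, hence $\Ric_{g_i}\geq -1$. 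So the approximating sequence required by the corollary is already in place, and only the two boundary assertions remain.

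The next step is to read off the relevant geometry from the construction behind Theorem \ref{thmnonmfd}. By that construction $X_\infty$ is homeomorphic to $X$ with a countable dense family of topological balls removed; let $\{S_k\}_{k\geq 1}$ denote the boundary hypersurfaces so created (each an open piece of a round $(n-1)$-sphere), so that $\bigcup_k S_k$ is dense in $X_\infty$ and carries its topological boundary. The crucial local claim is that each $S_k$ is a one-sided limit of smooth hypersurfaces: near $S_k$ the metrics $g_i$ collapse a $2$-dimensional fibre direction while the transverse geometry converges to that of a one-sided collar of a smooth hypersurface in $X$, so the tangent cone of $X_\infty$ at every $x\in S_k$ is the half-space $\mathbb{R}^{n-1}\times[0,\infty)$. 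Granting this, every tangent cone at such an $x$ splits off $\mathbb{R}^{n-1}$ but no tangent cone splits off $\mathbb{R}^n$; thus $x\in\mathcal{S}^{n-1}(X_\infty)\sq\mathcal{S}^{n-2}(X_\infty)=\partial^*X_\infty$, and since the tangent cone at $x$ is a half-space, $x\in\partial_{\rm KM}X_\infty$ in the inductive sense of Kapovitch--Mondino \cite{km1}. Hence $\bigcup_k S_k\subseteq\partial^*X_\infty\cap\partial_{\rm KM}X_\infty$.

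Assembling the conclusion: since $\bigcup_k S_k\subseteq\partial^*X_\infty$ is dense, the definition of De Philippis--Gigli \cite{dephi1} gives $\partial_{\rm DPG}X_\infty=\overline{\partial^*X_\infty}=X_\infty$, and likewise $\partial_{\rm KM}X_\infty\supseteq\bigcup_k S_k$ is dense in $X_\infty$. Finally the hypersurfaces $S_k$ come from distinct removed balls, and the construction places these balls so that infinitely many of the $S_k$ lie in pairwise distinct connected components of $\partial_{\rm KM}X_\infty$; consequently $\partial_{\rm KM}X_\infty$ has infinitely many connected components, which is exactly the assertion of the corollary under the normalization $\Ric_{g_i}\geq -1$.

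The step I expect to be the main obstacle is the local tangent-cone claim: one must verify that the topological holes created by the plumbing surgery are genuine metric-measure boundary points whose blow-ups are half-spaces, uniformly enough along the sequence $(M_i,g_i)$ that the singular stratification of $X_\infty$ behaves as stated --- in particular ruling out degenerate tangent cones such as metric cones over $(n-1)$-spheres of diameter $<\pi$. This ought to follow from the same neck-and-collar estimates already used to establish parts (a)--(d) of Theorem \ref{thmnonmfd}. A secondary, more bookkeeping-type point is to guarantee that infinitely many $S_k$ lie in different components of $\partial_{\rm KM}X_\infty$ (e.g.\ by checking $\partial_{\rm KM}X_\infty=\overline{\bigcup_k S_k}$ and that this set is disconnected), which is where the precise placement of the removed balls in the construction enters.
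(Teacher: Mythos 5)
Your overall strategy matches the paper's: apply Theorem \ref{thmnonmfd} (the paper uses Proposition \ref{propnonmfd}), argue that tangent cones at boundary points are half-spaces $\mathbb{R}^{n-1}\times\mathbb{R}_{\geq 0}$, and conclude density of the boundary, hence $\partial_{\rm DPG}X_\infty=X_\infty$. The choice $(X,h)=\mathbb{R}^n$, $\epsilon=1$ is a fine instantiation.

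However, your argument for infinitely many connected components of $\partial_{\rm KM}X_\infty$ has a genuine gap, and your proposed remedy does not work. You suggest checking that $\partial_{\rm KM}X_\infty=\overline{\bigcup_k S_k}$ and that this set is disconnected, but $\bigcup_k S_k$ is dense in $X_\infty$, so its closure equals $X_\infty$, which is connected (a GH limit of connected manifolds). The confusion is that $\partial_{\rm KM}$ is \emph{not} a closure --- it is defined inductively on tangent cones and here coincides (away from closure) with the union of the boundary hypersurfaces themselves, a dense but highly disconnected set. The mechanism the paper uses, which your write-up never identifies, is that the inductive construction deliberately leaves the concentric spheres $\partial B^h_{2^{-i}}(p_0)$ untouched: by the inductive hypothesis $g_{X,i}=h$ there, so every point of $\bigcup_i\partial B^h_{2^{-i}}(p_0)$ is an $n$-regular point and hence \emph{not} in $\partial_{\rm KM}X_\infty$. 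These interior spheres separate $X_\infty$ into countably many open annular regions, each containing boundary pieces, so $\partial_{\rm KM}X_\infty$ has at least one connected component in each annulus --- infinitely many in total. Without invoking this separating family of regular spheres (or some replacement for it), the mere fact that the $S_k$ arise from distinct removed balls does not rule out their all lying in one connected component of a dense boundary set.
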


\begin{rmk}
We thank Professor Aaron Naber for pointing out this interesting corollary.
\end{rmk}

This paper is organized as follows. At first, we will give an outline of our construction in Section \ref{sectionoutline}. In Section \ref{sectionpreliminary}, we collect some preliminary results. Then we will construct the local model in Section \ref{sectionlocalmodel}. The construction of $(M_i ,g_i)$ is presented in Section \ref{holessection}. Finally, the proofs of Theorem \ref{thmnonmfd} and Corollary \ref{coroboundary} are contained in Section \ref{proofsection}.

\textbf{Acknowledgement.} The author wants to express his deep gratitude to Professor Gang Tian for constant encouragement. The author would also be grateful to Professor Aaron Naber, Professor Wenshuai Jiang and Professor Jianchun Chu for valuable conversations. Additionally, the author thanks Professor Xiaochun Rong and Professor Guofang Wei for their interests in this note. Furthermore, the author expresses gratitude to Professor Xin Fu and Professor Jiyuan Han for their warm hospitality during his visit to Westlake University. In particular, the author would like to thank Daniele Semola for pointing out some errors in Introduction of the first edition.

\section{Outline of the construction}
\label{sectionoutline}

At first, we describe our construction in the topological sense. For the moment, let's set aside considerations of Ricci curvature and smoothness.

Our approach to puncture holes in the manifold $X$ is to consider plumbing gluing surgery on higher dimensional manifolds. Although the manifolds in the statement of Theorem \ref{thmnonmfd} are manifolds without boundary, the new boundaries will appear after we puncture holes on the manifold $X$. Thus we assume here that $X$ is a manifold with boundary in the following description.

Let $n,k\geq 1$, $X$ be an $n$-dimensional manifold with boundary, $M$ be an $(n+k-1)$-dimensional manifold without boundary, and $F:M\to X$ be a continuous map such that:
\begin{itemize}
\item The restriction of $F$ on $F^{-1} ( \inte (X) ) \subset M $ gives a $\sphere^{k-1}$-bundle structure on $ F^{-1} ( \inte (X) )$.
\item The pre-image $F^{-1}(x) $ is a single point if $x\in \partial X$.
\end{itemize}

Choosing a pre-compact open subset $U$ of $\inte (X) $ such that $\bar{U}$ is homeomorphic to $\bar{B}^{n}_1 (0) \subset \mathbb{R}^n $. Then $\partial F^{-1} ( \bar{U} ) $ is homeomorphic to $\sphere^{n-1} \times \sphere^{k-1} $. Clearly, $\partial \left( \sphere^{n-1} \times \bar{B}^{k}_1 (0) \right)$ is also homeomorphic to $\sphere^{n-1} \times \sphere^{k-1} $. Hence we can get a new $(n+k-1)$-dimensional manifold $M'$ by gluing $\sphere^{n-1} \times \bar{B}^{k}_1 (0) $ to $M\sq F^{-1} ( U )$.

Now we consider the Euclidean distance function $d (y)= d_{\rm Euc} (y,0) $ on $\bar{B}^{k}_1 (0)$. Clearly, $d: \bar{B}^{k}_1 (0) \to [0,1]$ gives a $\sphere^{k-1}$-bundle structure on $ d^{-1} ( (0,1] ) = \bar{B}^{k}_1 (0) \sq \{ 0 \} $, and $d^{-1} (0) $ is a single point $\{ 0 \}$. Then we can get a map $\pi_d : \sphere^{n-1} \times \bar{B}^{k}_1 (0) \to \sphere^{n-1} \times [0,1] $ such that $\pi_d^{-1} $ gives a $\sphere^{k-1}$-bundle structure on $ \pi_d^{-1} ( \sphere^{n-1} \times (0,1] ) = \sphere^{n-1} \times \left( \bar{B}^{k}_1 (0) \sq \{ 0 \} \right) $, and $\pi_d^{-1} (y) $ is a single point if $y\in \sphere^{n-1} \times \{ 0 \} $. Then we can obtain an $n$-dimensional manifold $X'$ with boundary $\partial X' =\partial X \bigsqcup \sphere^{n-1} $ by gluing $\sphere^{n-1} \times [0,1] $ to $X\sq U$, and get a continuous map $F' : M' \to X'$ by gluing $F$ and $\pi_d$.

Actually, the metric on $\bar{B}^k_{1} (0)$ we used is isometric to the cone metric on $C(\sphere^{k-1}_\theta )$ for some $\theta\in (0,1)$. We then shrink the restriction of the metric on the $\sphere^{k-1}$-fibers. As the constant $\theta$ decreases, $C(\sphere^{k-1}_\theta )$ converges to the interval $[0,1]$. 

It is easy to see that the data $(X', M', F')$ also satisfies the conditions above, allowing us to replace $(X,M,F)$ by $ (X',M',F') $. By repeating this process, we can puncture an infinite number of holes in $X$, and the topology of the Ricci limit space that we have constructed is the space obtained by removing an infinite number of balls from $X$.

To make our construction easier to understand, we give here a graph for the case where $n=2$, $k=1$, $X_0=\bar{\mathbb{D}}_1 (0)$, $M_0 =\sphere^2 $ and $F_0$ is the standard projection. Here, $M_i$ is getting squashed, causing the Gromov-Hausdorff distance between $M_i $ and $X_i$ to decrease.

\smallskip

\begin{tikzpicture}
\draw (0,0) circle (1);
\draw (5,0) ellipse (1 and 0.7);
\draw (10,0) ellipse (1 and 0.5);
\draw (0,-3) circle (1);
\draw (5,-3) circle (1);
\draw (10,-3) circle (1);
\draw (-1,0) arc (180:360:1 and 0.3); 
\draw[densely dotted] (1,0) arc (0:180:1 and 0.3);
\draw (5,0) ellipse (0.5 and 0.2);
\draw (5,-0.7) arc (-90:90:0.1 and 0.25); 
\draw[densely dotted] (5,-0.2) arc (90:270:0.1 and 0.25);
\draw (10,0) ellipse (0.5 and 0.1);
\draw (10,-0.3) ellipse (0.1 and 0.05);
\draw[densely dotted] (10.5,0) arc (180:360:0.25 and 0.1); 
\draw (11,0) arc (0:180:0.25 and 0.1);
\draw (10,-0.5) arc (-90:90:0.03 and 0.075); 
\draw[densely dotted] (10,-0.35) arc (90:270:0.03 and 0.075);
\draw (10,-0.25) arc (-90:90:0.02 and 0.075); 
\draw[densely dotted] (10,-0.1) arc (90:270:0.02 and 0.075);
\draw (5,-3) circle (0.3);
\draw (10,-3) circle (0.3);
\draw (10,-3.7) circle (0.1);
\draw[->] (0,-1.2)--(0,-1.8);
\draw[->] (5,-1.2)--(5,-1.8);
\draw[->] (10,-1.2)--(10,-1.8);
\draw[->] (3.5,-3)--(1.5,-3);
\draw[->] (8.5,-3)--(6.5,-3);
\node at(0,1.3) {$M_0$};
\node at(5,1.3) {$M_1$};
\node at(10,1.3) {$M_2$};
\node at(0,-4.3) {$X_0$};
\node at(5,-4.3) {$X_1$};
\node at(10,-4.3) {$X_2$};
\node at(0.5,-1.5) {$F_0$};
\node at(5.5,-1.5) {$F_1$};
\node at(10.5,-1.5) {$F_2$};
\node at(2.5,-2.5) {inclusion};
\node at(7.5,-2.5) {inclusion};
\draw [very thick, loosely dotted] (12,0)--(13,0);
\draw [very thick, loosely dotted] (12,-3)--(13,-3);
\end{tikzpicture}

\smallskip

The above graph is to provide some intuition. Now we give an outline of our construction.

\begin{enumerate}[(I).]
    \item Write $(X_0,h_0) =(X,h) $, $(M_0 ,g_0 ) = \left( X\times \sphere^2  ,h+ (1+|\lambda|)^{-10} g_{\sphere^2} \right) $, $f_0 = (1+|\lambda|)^{-5} $, and $W_0 =\emptyset \subset M_0 $. Then we have $\Ric_{g_0} >\lambda $.
    \item Let $n\geq 3$, $(X_i,h_i)$ be an $n$-dimensional smooth Riemannian manifold with boundary, $f_i$ be a non-negative function on $X_i$, $f_i>0$ on $\inte (X_i) $, $(M_i,g_i)$ be an $(n+2)$-dimensional smooth Riemannian manifold without boundary, $\Ric_{g_i} >\lambda $, and $W_i\subset M_i $ be an $(n-1)$-dimensional submanifold such that $$ (M_i \sq W_i ,g_i ) \cong (\inte (X_i ) \times \sphere^2 ,h_i + f_i^2 g_{\sphere^2 } ) .$$
    \item Let $U_i $ be a small pre-compact open subset of $\inte (X_i )$, such that $\bar{U}_i $ diffeomorphic to $\bar{B}^n_1 (0) $. 
    \item Construct a metric $h_{i+1}$ and a non-negative function $f_{i+1}$ on $X_i \sq U_i$, such that $f_{i+1} =0 $ on $\partial U_i$, and the completion of $ ( (\inte (X_i ) \sq U_i ) \times \sphere^2 ,h_{i+1} + f_{i+1}^2 g_{\sphere^2 } ) $ near $\partial U_i$ is a smooth Riemannian manifold without boundary. Moreover, we can assume that $h_{i+1} =h_i $ and $f_{i+1} = \delta f_i $ outside a small neighborhood of $\partial U_i$.
    \item Consider the completion $(Y,d)$ of $ ( (\inte (X_i ) \sq U_i ) \times \sphere^2 ,h_{i+1} + f_{i+1}^2 g_{\sphere^2 } ) $. Then there is a natural embedding $W_i \to Y $, and tangent cone at any point in $W_{i }$ is always isometric to $C(\sphere^2_{\delta } ) \times \mathbb{R}^{n-1} $.
    \item Modify $f_{i+1}$ near $W_{i }$ such that the completion of $ ( (\inte (X_i ) \sq U_i ) \times \sphere^2 ,h_{i+1} + f_{i+1}^2 g_{\sphere^2 } ) $ near $\partial W_i $ becomes a smooth Riemannian manifold without boundary. See also Corollary \ref{corollarymodify}.
    \item Let $(M_{i+1} ,g_{i+1} )$ be the completion of $ ( (\inte (X_i ) \sq U_i ) \times \sphere^2 ,h_{i+1} + f_{i+1}^2 g_{\sphere^2 } ) $, and $(X_{i+1},h_{i+1},f_{i+1})=(X_i\sq U_i ,h_i ,f_i)$. Then we do the process (ii)-(v) again and again.
    \item Since $d_{GH} (M_i ,X_i) = \Psi (i^{-1}) $, we see that the Gromov-Hausdorff limits $\lim M_i = \lim X_i =X_\epsilon $. By composing the inclusion maps $X_{i+1} = X_i \sq U_i \to X_i $, we can obtain a Lipschitz map $X_{\epsilon} \to X_i $. Then we can prove Theorem \ref{thmnonmfd}.
\end{enumerate}

\begin{rmk}
From the above construction, we see that by replacing $ \mathbb{S}^2 $ with $ \mathbb{S}^m $, $m\geq 2$, one can obtain in the similar way a sequence of $(n+m)$-dimensional manifolds $(M_{i} ,g_i ) \stackrel{GH}{\longrightarrow} (X_\epsilon ,d_\epsilon )$ with $\Ric_{g_i} > \lambda $ and $X_\epsilon $ has infinite holes.
\end{rmk}

\section{Preliminary}
\label{sectionpreliminary}

\subsection{Double warped products}
 
In this section, we recall the Ricci curvature of double warped products.
 
 Let $\varphi ,\phi $ be smooth nonnegative functions on $[0,\infty )$ such that $\varphi ,\phi $ are positive on $(0,\infty )$,
 \begin{eqnarray} \label{conditionphirho}
 \phi (0) >0 ,\; \phi^{\mathrm{(odd)}} (0) =0 , 
 \end{eqnarray}
and
\begin{eqnarray} \label{conditionvarphi}
\varphi (0) =0 ,\; \varphi' (0)= 1 ,\; \varphi^{\mathrm{(even)}} (0) =0 .
\end{eqnarray}
Then we can define a Riemannian metric on $ \mathbb{R}^{3} \times \sphere^n $ by
$$ g_{\varphi ,\phi } (r) = dr^2 + \varphi (r)^2 g_{\sphere^2} +  \phi (r)^2 g_{\sphere^n} .$$
See also \cite[Proposition 1.4.7]{pp1} for more details.

Let $X_0 =\frac{\partial }{\partial r} $, $X_1 \in T_{\sphere^2} $, and $X_2 \in T_{\sphere^n} $ be unit vectors. Then the Ricci curvature of $g_{\varphi ,\phi }$ can be expressed as following. 

\begin{lmm}\label{lmmriccicurvature}
Let $\varphi ,\phi $ and $ g_{\varphi , \phi }$ be as above. Then the Ricci curvature tensor of $ g_{\varphi , \phi }$ can be determined by
\begin{eqnarray}
\Ric_{g_{\varphi , \phi } } \left(X_0 \right) & = & - \left( 2 \frac{\varphi''}{\varphi} + n \frac{\phi''}{\phi} \right) X_0 ,\\
\Ric_{g_{\varphi , \phi } } \left(X_1 \right) & = & \left[-  \frac{\varphi''}{\varphi} + \frac{1-(\varphi')^2 }{\varphi^2} - n \frac{\varphi' \phi'}{\varphi \phi } \right] X_1 ,\\
\Ric_{g_{\varphi , \phi } } \left(X_2 \right) & = & \left[-  \frac{\phi''}{\phi} +(n-1) \frac{1-(\phi')^2 }{\phi^2} - 2 \frac{\varphi' \phi'}{\varphi \phi } \right] X_2 .
\end{eqnarray}
\end{lmm}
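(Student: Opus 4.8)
The plan is to treat $g_{\varphi,\phi}$, on the open set $\{r>0\}$ where it is a genuine warped product, as an \emph{iterated} (singly) warped product and to apply the standard warped-product Ricci formula (cf.\ \cite{pp1}) twice; the identity at $r=0$ then follows by continuity, using the smoothness guaranteed by \eqref{conditionphirho} and \eqref{conditionvarphi}. Concretely, on $\{r>0\}$ set $N:=\big((0,\infty)\times\sphere^2,\,dr^2+\varphi(r)^2 g_{\sphere^2}\big)$, so that $\big(\mathbb{R}^3\times\sphere^n\setminus\{r=0\},\,g_{\varphi,\phi}\big)$ is precisely the warped product $N\times_\phi\sphere^n$ with warping function $\phi$, which depends only on the $(0,\infty)$-coordinate of $N$.

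First I would assemble the auxiliary data on $N$. Treating $N=(0,\infty)\times_\varphi\sphere^2$ itself as a warped product and using $\Ric_{\sphere^2}=g_{\sphere^2}$, the warped-product formula gives $\Ric_N(\partial_r)=-2\tfrac{\varphi''}{\varphi}\partial_r$, gives $\Ric_N(X_1)=\big(\tfrac{1-(\varphi')^2}{\varphi^2}-\tfrac{\varphi''}{\varphi}\big)X_1$ for $X_1$ a $g_{\varphi,\phi}$-unit vector tangent to $\sphere^2$, and has vanishing $\partial_r$--$\sphere^2$ mixed component. Since $\phi$ is a function of $r$ only, $\nabla^N\phi=\phi'\partial_r$ and $|\nabla^N\phi|^2=(\phi')^2$; using the warped-product connection identity $\nabla^N_{X_1}X_1=\nabla^{\sphere^2}_{X_1}X_1-\varphi\varphi' g_{\sphere^2}(X_1,X_1)\,\partial_r$ one finds $\mathrm{Hess}^N\!\phi(\partial_r,\partial_r)=\phi''$, $\mathrm{Hess}^N\!\phi(\partial_r,X_1)=0$, and $\mathrm{Hess}^N\!\phi(X_1,X_1)=\tfrac{\varphi'\phi'}{\varphi}$ for $X_1$ a $g_{\varphi,\phi}$-unit vector, hence $\Delta^N\phi=\phi''+2\tfrac{\varphi'\phi'}{\varphi}$.

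Then I would substitute into the warped-product Ricci formula for $N\times_\phi\sphere^n$: for $Z,W$ tangent to $N$, $\Ric_{g_{\varphi,\phi}}(Z,W)=\Ric_N(Z,W)-\tfrac{n}{\phi}\mathrm{Hess}^N\!\phi(Z,W)$; the $N$--$\sphere^n$ mixed component vanishes; and for $X_2$ a $g_{\varphi,\phi}$-unit vector tangent to $\sphere^n$, $\Ric_{g_{\varphi,\phi}}(X_2,X_2)=(n-1)g_{\sphere^n}(X_2,X_2)-\big(\tfrac{\Delta^N\phi}{\phi}+(n-1)\tfrac{|\nabla^N\phi|^2}{\phi^2}\big)$. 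Plugging in the quantities from the previous step and the normalisations $g_{\sphere^2}(X_1,X_1)=\varphi^{-2}$, $g_{\sphere^n}(X_2,X_2)=\phi^{-2}$ yields the three stated eigenvalues. Because every off-diagonal Ricci component vanishes and, by isotropy of the round metrics and of the warping, $\Ric_{g_{\varphi,\phi}}$ restricted to the $\sphere^2$- and $\sphere^n$-directions is a multiple of the corresponding block of $g_{\varphi,\phi}$, the vectors $X_0,X_1,X_2$ are genuine Ricci eigenvectors; both sides are smooth across $r=0$ by \eqref{conditionphirho}--\eqref{conditionvarphi}, so the identities extend there by continuity.

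I do not anticipate any real obstacle: the content is entirely the correct bookkeeping of standard identities. The only delicate point is convention management --- the sign conventions for $\Delta$ and $\mathrm{Hess}$, and, above all, that $X_1,X_2$ are unit for the full metric $g_{\varphi,\phi}$ and not for $g_{\sphere^2}$ or $g_{\sphere^n}$, which is exactly what produces the $\varphi^{-2}$ and $\phi^{-2}$ terms in the answer. As a routine alternative one can bypass the iteration and work directly in the $g_{\varphi,\phi}$-orthonormal frame $e_0=\partial_r$, $e_i=\varphi^{-1}\hat e_i$, $e_\alpha=\phi^{-1}\hat e_\alpha$ (with $\hat e$ round-orthonormal frames on $\sphere^2$ and $\sphere^n$): the non-intrinsic connection $1$-forms are $\omega^i{}_0=\tfrac{\varphi'}{\varphi}e^i$ and $\omega^\alpha{}_0=\tfrac{\phi'}{\phi}e^\alpha$, and one reads off the curvature $2$-forms and traces; this is longer but conceptually the same computation.
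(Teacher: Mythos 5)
Your proof is correct, and it is essentially the same argument the paper intends: the paper gives no details and simply cites Petersen's treatment of doubly warped products (Subsection 4.2.4 of \cite{pp1}), and your iterated-warped-product computation (or the orthonormal-frame alternative you sketch) is precisely the ``straightforward calculation'' being deferred to. All the ingredients you invoke check out --- $\Delta^N\phi = \phi'' + 2\varphi'\phi'/\varphi$, $\mathrm{Hess}^N\phi(X_1,X_1)=\varphi'\phi'/\varphi$ for $g_{\varphi,\phi}$-unit $X_1$, the vanishing of mixed terms, and the normalisations $g_{\sphere^2}(X_1,X_1)=\varphi^{-2}$, $g_{\sphere^n}(X_2,X_2)=\phi^{-2}$ --- and assembling them reproduces exactly the three stated eigenvalues.
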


\begin{proof}
One can conclude it by a straightforward calculation. See also \cite[Subsection 4.2.4]{pp1}.
\end{proof}

Then we have the following gluing lemma for double warped products.

\begin{lmm}\label{lmmgluing}
Let $\varphi ,\phi $ and $ g_{\varphi , \phi }$ be as above. 

Assume that $\varphi' >0 , \phi'>0 $ on the interval $(r_0-2\delta , r_0 +2\delta ) \subset (0,\infty )$, and $\varphi , \phi$ are smooth on $(r_0-2\delta , r_0 ) \cup (r_0 , r_0 +2\delta )$. Suppose that $\varphi , \phi$ have smooth limits on both sides of $r_0$, $\varphi'_+ (r_0 ) \leq \varphi'_- (r_0 ) $, and $\phi'_+ (r_0 ) \leq \phi'_- (r_0 ) $. Then for any $\epsilon >0$, we can find smooth $\tilde{\varphi}$, $\tilde{\phi}$ such that $|\tilde{\varphi } -\varphi | + |\tilde{\phi} -\phi | \leq \epsilon $ on $(r_0 -2\delta ,r_0 +2 \delta )$, and $\tilde{\varphi} = \varphi $, $\tilde{\phi} = \phi $ outside $(r_0-\delta , r_0 +\delta )$, and $\inf \Ric_{g_{\tilde{\varphi} , \tilde{\phi} } } \geq \inf \Ric_{g_{\varphi , \phi } } - \epsilon $.
\end{lmm}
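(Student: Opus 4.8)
The plan is to confine the whole modification to a neighbourhood of $r_0$, carry it out by mollification at a small scale $\sigma\ll\delta$, and patch back to $\varphi,\phi$ across the annulus $\tfrac12\delta<|r-r_0|<\delta$, where $\varphi,\phi$ are already smooth. Since $g_{\varphi,\phi}$ is at least a continuous metric across $r_0$, the one-sided limits of $\varphi$ and of $\phi$ agree there, so near $r_0$ we have $\varphi,\phi\in C^0$ with $\varphi',\phi'\in L^\infty$, and the hypotheses $\varphi'_+(r_0)\le\varphi'_-(r_0)$, $\phi'_+(r_0)\le\phi'_-(r_0)$ say precisely that $\varphi',\phi'$ have \emph{downward} jumps at $r_0$; equivalently, as distributions near $r_0$,
\[
\varphi''=\{\varphi''\}+\bigl(\varphi'_+(r_0)-\varphi'_-(r_0)\bigr)\delta_{r_0},\qquad
\phi''=\{\phi''\}+\bigl(\phi'_+(r_0)-\phi'_-(r_0)\bigr)\delta_{r_0},
\]
with bounded regular parts $\{\varphi''\},\{\phi''\}$ and non-positive coefficients in front of $\delta_{r_0}$. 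This sign is the crux of everything.

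Concretely, I would fix a mollifier $\rho\ge0$ with $\int\rho=1$ and $\mathrm{supp}\,\rho\subset[-1,1]$, set $\rho_\sigma(t)=\sigma^{-1}\rho(t/\sigma)$, and on $(r_0-\tfrac32\delta,r_0+\tfrac32\delta)$ put $\varphi_\sigma=\varphi*\rho_\sigma$, $\phi_\sigma=\phi*\rho_\sigma$; since $\varphi,\phi>0$ and $\varphi',\phi'>0$ on $(r_0-2\delta,r_0+2\delta)$ we get $\varphi_\sigma,\phi_\sigma>0$ and $\varphi_\sigma',\phi_\sigma'>0$ there for all $\sigma<\tfrac12\delta$. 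Then I would fix a cutoff $\chi\in C^\infty$ with $\chi\equiv1$ on $[r_0-\tfrac12\delta,r_0+\tfrac12\delta]$ and $\chi\equiv0$ off $(r_0-\delta,r_0+\delta)$, and set $\tilde\varphi=\chi\varphi_\sigma+(1-\chi)\varphi$, $\tilde\phi=\chi\phi_\sigma+(1-\chi)\phi$. On the annulus $\tfrac12\delta<|r-r_0|<\delta$ the functions $\varphi,\phi$ are smooth, so $\varphi_\sigma\to\varphi$, $\phi_\sigma\to\phi$ in $C^2$ there as $\sigma\to0$; hence $\tilde\varphi,\tilde\phi$ are smooth everywhere, positive, agree with $\varphi,\phi$ outside $(r_0-\delta,r_0+\delta)$, and $\|\tilde\varphi-\varphi\|_{C^0}+\|\tilde\phi-\phi\|_{C^0}\le C\sigma\to0$. (Incidentally $\tilde\varphi',\tilde\phi'>0$ is preserved too, though the statement does not ask for it.)

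For the Ricci bound only the interval $(r_0-\delta,r_0+\delta)$ matters. On every compact subset of $(r_0-\delta,r_0+\delta)\setminus\{r_0\}$ the $C^2$-convergence gives $\Ric_{g_{\tilde\varphi,\tilde\phi}}\to\Ric_{g_{\varphi,\phi}}$, so the real work is on the shrinking interval $|r-r_0|\le\sigma$, where $\tilde\varphi=\varphi_\sigma$, $\tilde\phi=\phi_\sigma$. There $\varphi_\sigma=\varphi(r_0)+o(1)$, $\phi_\sigma=\phi(r_0)+o(1)$, and, with $\lambda=\lambda(r)=\int_{r-\sigma t<r_0}\rho(t)\,dt\in[0,1]$ and the \emph{same} weight $\lambda$ throughout,
\[
\varphi_\sigma'\approx\lambda\varphi'_-(r_0)+(1-\lambda)\varphi'_+(r_0),\qquad
\varphi_\sigma''\approx\lambda\varphi''_-(r_0)+(1-\lambda)\varphi''_+(r_0)+\bigl(\varphi'_+(r_0)-\varphi'_-(r_0)\bigr)\rho_\sigma(r-r_0),
\]
up to $o(1)$, and likewise for $\phi_\sigma',\phi_\sigma''$ with that same $\lambda$. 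Feeding these into Lemma \ref{lmmriccicurvature}, the structural point is that $\varphi''$ and $\phi''$ enter all three formulas only through terms of the shape $c\cdot(-\varphi''/\varphi)$ or $c\cdot(-\phi''/\phi)$ with $c\ge0$ and $\varphi_\sigma,\phi_\sigma>0$; as the $\delta_{r_0}$-coefficients are $\le0$, the mollified Dirac parts contribute non-negatively to $\Ric_{g_{\tilde\varphi,\tilde\phi}}(X_j)$ for $j=0,1,2$, and may simply be dropped. What remains is assembled from $\varphi_\sigma,\phi_\sigma$ and the common-weight convex combinations $\varphi_\sigma',\phi_\sigma'$, $\lambda\varphi''_\pm(r_0)$, $\lambda\phi''_\pm(r_0)$; using convexity of $t\mapsto t^2$ for the $(\varphi')^2,(\phi')^2$ terms, and the inequality $(\varphi'_-(r_0)-\varphi'_+(r_0))(\phi'_-(r_0)-\phi'_+(r_0))\ge0$ --- once again exactly our hypothesis --- to dominate the cross terms $\varphi'\phi'/(\varphi\phi)$, one checks that each $\Ric_{g_{\tilde\varphi,\tilde\phi}}(X_j)$ is, up to $o(1)$, at least a convex combination of the two one-sided limits $\lim_{r\to r_0^\pm}\Ric_{g_{\varphi,\phi}}(X_j)$, hence at least $\inf\Ric_{g_{\varphi,\phi}}-o(1)$. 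Together with the convergence away from $r_0$, choosing $\sigma$ small enough delivers both $|\tilde\varphi-\varphi|+|\tilde\phi-\phi|\le\epsilon$ and $\inf\Ric_{g_{\tilde\varphi,\tilde\phi}}\ge\inf\Ric_{g_{\varphi,\phi}}-\epsilon$.

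The one genuinely delicate point --- the main obstacle --- is this last bookkeeping: a priori $\varphi_\sigma''$, $(\varphi_\sigma')^2$, $\varphi_\sigma'\phi_\sigma'$, and so on are $\rho_\sigma$-averages that need not recombine into a single convex combination of one-sided Ricci values, and one must rule out any term dropping $\Ric$ below $\inf\Ric_{g_{\varphi,\phi}}-\epsilon$. The resolution is that the downward-jump hypothesis is used twice: once to make the $\delta_{r_0}$-parts of $\varphi'',\phi''$ help rather than hurt, and once as $(\varphi'_--\varphi'_+)(\phi'_--\phi'_+)\ge0$ so that the mollified product $\varphi_\sigma'\phi_\sigma'$ is at most the corresponding convex combination; convexity of the square then controls the rest. (When both jumps are strict one may instead observe that near $r_0$ the function $\varphi$ equals the pointwise minimum of its two smooth one-sided extensions, and smooth that minimum from below --- writing $\min(a,b)=\tfrac12(a+b)-\tfrac12|a-b|$ and replacing $|a-b|$ by $\sqrt{(a-b)^2+\sigma^2}$ --- which again yields a non-positive concentrated contribution to $\varphi''$ and the same estimate, and likewise for $\phi$.)
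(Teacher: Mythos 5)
Your proof is correct, and it takes a genuinely different route from the paper's: the paper simply cites Menguy's $C^1$ gluing lemma \cite[Lemma 1.170]{menguy2} (cf. \cite[Lemma 3.1]{hnw1}) and gives no argument, whereas you supply a direct, self-contained mollification proof. Your argument is in effect a proof of the special case of Menguy's lemma needed here, and it isolates exactly where the two sign hypotheses enter. The decomposition $\varphi''=\{\varphi''\}+(\varphi'_+-\varphi'_-)\delta_{r_0}$ (and likewise for $\phi$), combined with the observation that $\varphi''$ and $\phi''$ appear in Lemma \ref{lmmriccicurvature} only through $-\varphi''/\varphi$ and $-\phi''/\phi$ with $\varphi,\phi>0$, shows the mollified Dirac parts contribute nonnegatively; this is one use of $\varphi'_+\le\varphi'_-$, $\phi'_+\le\phi'_-$. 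The second use is the identity
\[
\lambda\varphi'_-\phi'_- + (1-\lambda)\varphi'_+\phi'_+ - \varphi_\sigma'\phi_\sigma' = \lambda(1-\lambda)(\varphi'_--\varphi'_+)(\phi'_--\phi'_+)\ge 0,
\]
which, together with convexity of $t\mapsto t^2$, shows the remaining terms in $\Ric(X_1)$ and $\Ric(X_2)$ are at least the $\lambda$-convex combination of the two one-sided limits (all denominators $\varphi(r_0),\phi(r_0)$ being positive and common up to $O(\sigma)$). Since the common weight $\lambda(r)=\int_{r-\sigma t<r_0}\rho(t)\,dt$ is the same for every mollified derivative, each of the three Ricci eigenvalues at $|r-r_0|\le\sigma$ is bounded below by $\lambda\Ric_-(X_j)+(1-\lambda)\Ric_+(X_j)-o(1)\ge\inf\Ric_{g_{\varphi,\phi}}-o(1)$, and on $\sigma\le|r-r_0|\le\delta$ the $C^2$-convergence of $\tilde\varphi,\tilde\phi$ to $\varphi,\phi$ (uniform in this region because $\varphi,\phi$ have smooth one-sided extensions) gives the Ricci bound directly. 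What your approach buys over the paper's is transparency: it makes explicit that both sign hypotheses are used, and used for different terms, which the bare citation obscures; what it costs is length, since a ready-made $C^1$ gluing lemma is available in the literature.

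One small point worth cleaning up if you write this out in full: you should state explicitly that the $O(\sigma)$ error terms (from replacing $\varphi_\sigma,\varphi_\sigma',\{\varphi''\}*\rho_\sigma$ by their values/limits at $r_0$) are uniform over $|r-r_0|\le\sigma$, which follows because $\varphi,\phi$ together with their one-sided derivatives up to second order are uniformly continuous on $[r_0-\tfrac32\delta,r_0]$ and on $[r_0,r_0+\tfrac32\delta]$. This is what justifies replacing every denominator by $\varphi(r_0)$ or $\phi(r_0)$ and concluding $\inf\Ric_{g_{\tilde\varphi,\tilde\phi}}\ge\inf\Ric_{g_{\varphi,\phi}}-\epsilon$ for $\sigma$ small rather than merely for a.e.\ $r$.
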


This is a consequence of Menguy's $C^1$ gluing lemma \cite[Lemma 1.170]{menguy2}. See also \cite[Lemma 3.1]{hnw1}.

In general, if we consider the warped product $(M\times \sphere^n , g= g_M + f^2 g_{\sphere^n} )$, then the Ricci curvature operator can be given by
\begin{eqnarray}
\Ric_{g} |_{TM} & = & \Ric_{g_M} -n \frac{\mathrm{Hess}_g f}{f} ,\\
\Ric_g |_{T\sphere^n} & = & \left( \frac{1}{f^2}-(n-1)\frac{|\nabla_{g_M} f|_g^2}{f^2} - \frac{\Delta_g f }{f} \right) f^2 g_{\sphere^n } .
\end{eqnarray}

As a corollary, one can conclude that $\Ric_{ g_M + f^2 g_{\sphere^n} } \geq \lambda $, if and only if for any $c\in (0,1]$, $\Ric_{ g_M + c^2 f^2 g_{\sphere^n} } \geq \lambda $. See also \cite[Remark 3.2]{hnw1}.

\subsection{Adding conical singularities}
Now we recall the lemma Hupp-Naber-Wang used in \cite{hnw1} to add conical singularities to the given manifold.

\begin{lmm}[{\cite[Lemma 5.1]{hnw1}}]
\label{huppnaberwanglemma}
Let $( M_B ,g_B )$ be an $n+1$-dimensional Riemannian manifold, $n\geq 2$, $p\in M_B$, and $f$ be a positive smooth function on $B_2^{n+1} (p) $. Assume that $B_3^{n+1} (p) $ is pre-compact in $( M_B ,g_B )$, and the warped product space $(B_2^{n+1} (p) \times \sphere^2,g)$ with metric $g = g_B + f^2 g_{\sphere^2}$ satisfying 
\begin{align} 
\Ric_g >\lambda \, ,\inj_{g_B}(p) &\geq 2\, , \label{huppnaberwangcondition1}\\
	|\mathrm{Rm}_{g_B}|_{g_B}\, ,\, |\nabla\mathrm{Rm}_{g_B}|_{g_B}\, ,\, |\nabla^2\mathrm{Rm}_{g_B}|_{g_B}&\leq \frac{1}{2} \, , \label{huppnaberwangcondition2}\\
	|\nabla_{g_B}\ln f|_{g_B}\, ,\, |\nabla^2_{g_B}\ln f|_{g_B}\, ,\, |\nabla^3_{g_B} \ln f|_{g_B} &\leq \frac{1}{2} \, .\label{huppnaberwangcondition3}
\end{align}
Write $r := \dist_{g_B}(\cdot,p)$.
Then for all choices of parameters $0 < \eps < \eps(n,|\lambda|)$, $0<\alpha < \alpha(n,\eps)$, $0 < \hat{r} < \hat r(n,\alpha, \lambda,\eps)$, and $0 < \hat{\delta} < \hat{\delta}(n,\lambda,\alpha,\epsilon,||f||_{L^\infty},\hat{r})$, there exists $0 < \delta = \delta(\,\hat{\delta}\|f\|_\infty\mid n, \alpha, \lambda, \eps)$ and a warped product metric $\hat g = \hat{g}_B+\hat f^{\,2} g_{\sphere^2}$ such that:
\begin{enumerate}
	\item The Ricci lower bound $\Ric_{\hat g}>\lambda - C(n)\eps$ holds for $\hat r/2 \leq r \leq 2$\, ,
	\item $\hat g = g_B+\hat{\delta}^2 f^2 g_{\sphere^2}$, $ \forall r\in [1,2] $ ,
	\item $\hat g = dr^2+(1-\eps)^2 r^2 g_{\sphere^n}+\delta^2 r^{2\alpha} g_{\sphere^2}$, $\forall r\leq \hat r$ ,
	\item The identity map $\Id:(B_2(p),\hat{g}_B)\to (B_2(p),g_B)$ is $(1+2\epsilon)$-bi-Lipschitz.
\end{enumerate}
\end{lmm}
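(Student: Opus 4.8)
This is \cite[Lemma 5.1]{hnw1}; here is the plan I would follow.

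\emph{Geodesic gauge.} First pass to geodesic normal coordinates at $p$. Since $\inj_{g_B}(p)\ge2$ and $|\mathrm{Rm}_{g_B}|,|\nabla\mathrm{Rm}_{g_B}|,|\nabla^2\mathrm{Rm}_{g_B}|\le\tfrac12$ on $B_3(p)$, the exponential map identifies $B_2(p)$ with a Euclidean ball, $g_B=\delta_{ij}+O(r^2)$ with its first three derivatives controlled accordingly, and on any $B_\rho(p)$ the metric $g_B$ is $(1+C(n)\rho^2)$-bi-Lipschitz and $C^3$-close to the flat metric $dr^2+r^2 g_{\sphere^n}$, where $r=\dist_{g_B}(\cdot,p)$.

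\emph{Building $\hat g$.} Choose an auxiliary radius $\rho=\rho(n,\lambda,\eps)\in(0,1)$, small, with $C(n)\rho^2<\eps$, and set $\hat g=\hat g_B+\hat f^{\,2}g_{\sphere^2}$, defined in radial shells read from $r=2$ inward: on $[\rho,2]$ put $\hat g_B=g_B$ and $\hat f=\hat\delta f$; on a short shell below $\rho$ keep $\hat g_B=dr^2+\phi(r)^2 g_{\sphere^n}$ a warped product while interpolating its cross-section from (the symmetrization of) $g_B$ to the round metric of radius $(1-\eps)r$, with $0\le\phi'\le1$; below that, $\hat g_B=dr^2+(1-\eps)^2 r^2 g_{\sphere^n}$; and let $\hat f$ decrease smoothly from $\hat\delta f$ to $\delta r^\alpha$ (attained for $r\le\hat r$), while additionally making $\log\hat f$ \emph{steeply concave} on the short transition shell — possible because that shell is short and $\hat f$ may be kept of size $\lesssim\hat\delta\|f\|_{L^\infty}$ there. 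Finally smooth the finitely many interfaces with the $C^1$-gluing Lemma \ref{lmmgluing}, arranging the slope inequalities $\varphi'_+\le\varphi'_-$, $\phi'_+\le\phi'_-$ at each and losing at most $\eps$ in the Ricci bound. Then (2) and (3) hold by construction (with $\hat r<\rho/4$), and (4) holds because $\hat g_B=g_B$ on $[\rho,2]$ while on $B_\rho(p)$ the warping $\phi$ stays within $(1+C(n)\rho^2)(1-\eps)^{-1}<1+2\eps$ of $r$.

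\emph{Ricci estimate on $\hat r/2\le r\le2$.} Use Lemma \ref{lmmriccicurvature} and the warped-product formulas shell by shell. On $[\rho,2]$: $\hat g=g_B+\hat\delta^2 f^2 g_{\sphere^2}$ with $\hat\delta\le1$, so by the corollary after Lemma \ref{lmmgluing} (shrinking the $\sphere^2$-fibre preserves $\Ric\ge\lambda$) $\Ric_{\hat g}\ge\lambda$. On $\{r\le\rho/4\}$, with $\varphi=\delta r^\alpha$, $\phi=(1-\eps)r$: $\Ric(X_0)=2\alpha(1-\alpha)r^{-2}>0$; $\Ric(X_2)=\big[(n-1)\tfrac{1-(1-\eps)^2}{(1-\eps)^2}-2\alpha\big]r^{-2}>0$ because $\alpha$ was chosen $\ll\eps$; and $\Ric(X_1)=\delta^{-2}r^{-2\alpha}-\alpha(n-1+2\alpha)r^{-2}$, which is $>0$ for all $r\ge\hat r/2$ once $\delta^2\le(\hat r/2)^{2-2\alpha}/(\alpha(n-1+2\alpha))$ — this is the inequality pinning down $\delta$ in terms of $\hat\delta\|f\|_{L^\infty}$. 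On the transition shell, $\Ric_{\hat g_B}\ge-C(n)$ (bounded curvature from the symmetrization and the profile bending); the $TM_B$-block of $\Ric_{\hat g}$ is $\Ric_{\hat g_B}-2\,\mathrm{Hess}_{\hat g_B}\hat f/\hat f$, and steep concavity of $\log\hat f$ makes $-2\,\mathrm{Hess}_{\hat g_B}\hat f/\hat f$ as large as needed (rate $\sim C(n)+|\lambda|$, admissible once $\eps<\eps(n,|\lambda|)$), beating the negative $\Ric_{\hat g_B}$; the $\sphere^2$-block equals $\hat f^{-2}-2|\nabla\log\hat f|^2-\Delta\log\hat f$, and although $|\nabla\log\hat f|$ is large there, $\hat f\lesssim\hat\delta\|f\|_{L^\infty}$ is uniformly small (choose $\hat\delta$ small), so $\hat f^{-2}$ dominates; Lemma \ref{lmmgluing} handles the interfaces. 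Assembling the shells gives (1).

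\emph{Main obstacle.} The crux is the transition shell, where three effects must be balanced simultaneously: the curvature produced by desymmetrizing $g_B$ and by bending the cone angle down to $1-\eps$; the cross terms $\mathrm{Hess}_{\hat g_B}\hat f/\hat f$; and the collapse of the $\sphere^2$-direction. Keeping the shell short, spreading the angle change at a scale $\rho(n,\lambda,\eps)$ where $g_B$ is already close to flat, and shrinking the $\sphere^2$-fibre (via $\hat\delta$, hence $\delta$) so that its positive Ricci term swamps the rest is exactly what forces the parameter hierarchy $\eps\gg\alpha\gg\hat r\gg\hat\delta\sim\delta/\|f\|_{L^\infty}$; this is the content of \cite[Lemma 5.1]{hnw1}, and the remaining steps are routine given Lemmas \ref{lmmriccicurvature} and \ref{lmmgluing}.
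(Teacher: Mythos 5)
The paper offers no proof of this lemma; it quotes \cite[Lemma 5.1]{hnw1} verbatim, with only the remark that passing from their $n=4$ to general $n$ changes nothing. Your sketch is therefore a reconstruction of HNW's argument rather than of anything in this paper. The broad architecture you lay out — keep $g_B$ and scale the $\sphere^2$-fibre by $\hat\delta$ on the outer annulus (which preserves $\Ric\geq\lambda$ by the remark following Lemma~\ref{lmmgluing}), put the exact doubly-warped model $dr^2+(1-\eps)^2r^2 g_{\sphere^n}+\delta^2 r^{2\alpha} g_{\sphere^2}$ inside $r\leq\hat r$, and bridge the two by a log-concave fibre-warping whose steep concavity boosts the $TM_B$-block while collapse makes $\hat f^{-2}$ dominate the fibre block — is indeed the right plan, and your inner-cone curvature computations (together with the resulting bound $\delta^2\lesssim\hat r^{\,2-2\alpha}$) check out against Lemma~\ref{lmmriccicurvature}.

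There is, however, a genuine gap at the outer edge of your transition shell. You take $\hat g_B=g_B$ on $[\rho,2]$ and $\hat g_B=dr^2+\phi(r)^2 g_{\sphere^n}$ below $\rho$, interpolating ``from (the symmetrization of) $g_B$.'' But $g_B$ is \emph{not} rotationally symmetric: in geodesic polar coordinates at $p$ it is $dr^2+h_r$ with $h_\rho=\rho^2 g_{\sphere^n}+O(\rho^4)$, not exactly round, so your two prescriptions do not agree at $r=\rho$, and no choice of $\phi$ fixes that. You then defer to Lemma~\ref{lmmgluing} to ``smooth the finitely many interfaces,'' but that lemma only resolves $C^1$ corners between two \emph{rotationally symmetric} doubly warped metrics; it cannot transition from a generic curvature-bounded base metric to a symmetric one. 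This desymmetrization is precisely the nontrivial step in HNW's proof: one must build an explicit interpolated base metric between $g_B$ and the round warped product, bound its Ricci from below quantitatively using the $C^3$-closeness of $g_B$ to the Euclidean model guaranteed by \eqref{huppnaberwangcondition2}, and then show that the positive contributions coming from the steeply concave $\log\hat f$ and from $\hat f^{-2}$ absorb that defect. Your final paragraph correctly names this as the main obstacle but does not supply the argument, and filing it under Lemma~\ref{lmmgluing} is not an adequate substitute.
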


This is a slight generalization of Hupp-Naber-Wang's lemma. Although they chose $n=4$ in their original statement, the proof is essentially verbatim.

\section{Construction of the local model spaces}
\label{sectionlocalmodel}
In this section, we construct some local model spaces by double warped products.

At first, we construct a metric $g_{\varphi ,\phi}$ on $ \mathbb{R}^3 \times \sphere^n $ such that the asymptotic cone is $C(\sphere^n_{1-\epsilon })$.

\begin{lmm}\label{lmm3dimlocalmodel}
Let $ m = 2 $ and $ n\geq 2 $. Then for any $0< \epsilon \leq \frac{1}{100} $, $0<\alpha \leq \alpha_0 (n,\epsilon ) $ and $0<\delta \leq \delta_0 (n,\epsilon ,\alpha) $, we can find positive constants $R(n,\epsilon ,\alpha ) $, $C(n,\epsilon ,\alpha ) $ and functions $ \varphi ,\phi $ satisfying (\ref{conditionphirho}) and (\ref{conditionvarphi}), such that $\Ric_{g_{\varphi ,\phi}} > 0$, $\phi (0) =1$, $ \phi |_{[R,\infty )} = (1-\epsilon ) (r+C) $, $\varphi (0)=0$, and $ \varphi |_{[R,\infty )} = \delta (r+C)^{\alpha} $.

Moreover, there exists a small constant $\tau = \tau (n,\epsilon ,\alpha) >0 $ such that $\Ric_{g_{\varphi ,\phi}} \geq \frac{\tau }{r^2 +1} $.
\end{lmm}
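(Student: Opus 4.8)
The plan is to build $\varphi$ and $\phi$ piecewise in $r$ and then invoke Lemma \ref{lmmgluing} to smooth the junctions while keeping $\Ric > 0$. I would think of the construction in three regimes. Near $r=0$ I need the smoothness/closure conditions \eqref{conditionphirho}, \eqref{conditionvarphi}: take $\phi(r) = 1 + O(r^2)$ even with $\phi'(0)=0$, and $\varphi(r) = r + O(r^3)$ odd with $\varphi'(0)=1$, so that the metric closes up smoothly along $\mathbb{R}^3 \times \sphere^n$ (the $\varphi$ factor is responsible for the $\mathbb{R}^3 = \mathbb{R}\times_\varphi \sphere^2$ part, hence the Euclidean-type boundary conditions, and $\phi$ is the $\sphere^n$ radius, which may start at any positive value). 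For this I would actually begin from an honest rotationally symmetric metric — e.g. near $0$ use $\phi \equiv 1$ constant is not allowed because then $\varphi$ must solve the flat equation; instead start with the round/Euclidean model on a small ball so that $\Ric > 0$ strictly there. For large $r$, $r \geq R$, prescribe $\phi = (1-\epsilon)(r+C)$ and $\varphi = \delta(r+C)^\alpha$ as required; I must check this tail has $\Ric_{g_{\varphi,\phi}} > 0$, which is exactly the computation done via Lemma \ref{lmmriccicurvature}, and this forces the smallness constraints on $\epsilon$, $\alpha$, $\delta$.

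The heart of the matter is the tail estimate, so let me sketch it. With $\phi = (1-\epsilon)(r+C)$ one has $\phi'' = 0$, $\phi' = 1-\epsilon$, so $\frac{1-(\phi')^2}{\phi^2} = \frac{1-(1-\epsilon)^2}{(1-\epsilon)^2(r+C)^2} = \frac{c(\epsilon)}{(r+C)^2}$ with $c(\epsilon) > 0$. With $\varphi = \delta(r+C)^\alpha$ one gets $\frac{\varphi''}{\varphi} = \frac{\alpha(\alpha-1)}{(r+C)^2}$ (negative, but $O((r+C)^{-2})$ and small when $\alpha$ is small), $\frac{1-(\varphi')^2}{\varphi^2} = \frac{1}{\delta^2(r+C)^{2\alpha}} - \frac{\alpha^2}{(r+C)^2}$, and $\frac{\varphi'\phi'}{\varphi\phi} = \frac{\alpha(1-\epsilon)}{(1-\epsilon)(r+C)^2} = \frac{\alpha}{(r+C)^2}$ — wait, more carefully $\frac{\varphi'}{\varphi} = \frac{\alpha}{r+C}$ and $\frac{\phi'}{\phi} = \frac{1}{r+C}$, so the cross term is $\frac{\alpha}{(r+C)^2}$. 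Plugging into the three lines of Lemma \ref{lmmriccicurvature}: the $X_0$-Ricci is $-2\frac{\alpha(\alpha-1)}{(r+C)^2} - n\cdot 0 = \frac{2\alpha(1-\alpha)}{(r+C)^2} > 0$; the $X_1$-Ricci is $-\frac{\alpha(\alpha-1)}{(r+C)^2} + \frac{1}{\delta^2(r+C)^{2\alpha}} - \frac{\alpha^2}{(r+C)^2} - n\frac{\alpha}{(r+C)^2}$, which for $\alpha < 1$ is bounded below by $\frac{1}{\delta^2(r+C)^{2\alpha}} - \frac{C'(n)\alpha}{(r+C)^2}$, positive once $\delta$ is small enough relative to $\alpha$ (this is where $\delta \leq \delta_0(n,\epsilon,\alpha)$ enters, and also where $R$, $C$ get chosen so $r+C \geq 1$); the $X_2$-Ricci is $0 + (n-1)\frac{c(\epsilon)}{(r+C)^2} - 2\frac{\alpha}{(r+C)^2} = \frac{(n-1)c(\epsilon) - 2\alpha}{(r+C)^2}$, positive once $\alpha < \alpha_0(n,\epsilon)$. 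All three are $\geq \frac{\tau}{(r+C)^2} \geq \frac{\tau'}{r^2+1}$ for a suitable $\tau > 0$ depending only on $n,\epsilon,\alpha$ (using $\delta$ small so the $\delta^{-2}(r+C)^{-2\alpha}$ term does not spoil a clean lower bound — or simply keeping that term, which only helps), giving the "moreover" clause on the tail.

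Then I would handle the compact transition region $r \in [r_1, R]$: interpolate from the near-$0$ model to the tail model by any smooth curves keeping $\varphi, \phi > 0$, $\varphi' > 0$, $\phi' > 0$, and arrange the one-sided derivative inequalities $\varphi'_+ \leq \varphi'_-$, $\phi'_+ \leq \phi'_-$ at the finitely many breakpoints (achievable by slightly bending the interpolants so the convex-corner condition of Lemma \ref{lmmgluing} holds), then apply Lemma \ref{lmmgluing} at each breakpoint to obtain smooth $\tilde\varphi, \tilde\phi$ with $\Ric_{g_{\tilde\varphi,\tilde\phi}} \geq \inf \Ric_{g_{\varphi,\phi}} - \epsilon_0 > 0$, choosing the gluing error $\epsilon_0$ smaller than the positive Ricci lower bound we have on the compact piece, and doing the gluing far enough from $r=R$ that the prescribed forms of $\varphi, \phi$ on $[R,\infty)$ are untouched. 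Finally, since on any compact set $\Ric_{g_{\tilde\varphi,\tilde\phi}}$ is continuous and strictly positive, it is bounded below by $\frac{\tau}{r^2+1}$ there after shrinking $\tau$, and combined with the tail bound this gives the global $\Ric \geq \frac{\tau}{r^2+1}$. The main obstacle is the tail computation together with bookkeeping the order of quantifiers $\epsilon \to \alpha \to \delta$ (and the auxiliary $R, C, \tau$) so that every smallness requirement is consistent; the gluing in the middle is routine given Lemma \ref{lmmgluing}.
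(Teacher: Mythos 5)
Your overall strategy — a near-origin model, the prescribed tail, and a compact interpolation region smoothed out via Lemma \ref{lmmgluing} — matches the paper's Parts 1 and 2, and your tail computation is correct. But there is a genuine gap: you never address how $R$, $C$, and $\tau$ are to be made independent of $\delta$, which the statement requires (the symbols $R(n,\epsilon,\alpha)$, $C(n,\epsilon,\alpha)$, $\tau(n,\epsilon,\alpha)$ and the remark after the lemma make this explicit, and the independence is actually used downstream in Corollary \ref{coro3dimlocalmodel} and Lemma \ref{lmmpuncture}). The problem is structural. The closure condition forces $\varphi(0)=0$, $\varphi'(0)=1$, so $\varphi \approx r$ near the origin; meanwhile the tail gives $\varphi(R) = \delta(R+C)^\alpha$, which tends to $0$ as $\delta \to 0$. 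Since you need $\varphi' > 0$ on the interpolation interval, you need $\varphi(r_1) < \delta(R+C)^\alpha$ at the inner endpoint $r_1$, hence $r_1$ must shrink with $\delta$. Either $R$ (and the shape of the interpolant, hence $\tau$) ends up depending on $\delta$, contradicting the statement, or the whole transition is squeezed into a $\delta$-dependent neighborhood of $r=0$ where you have offered no mechanism for uniform control.

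The paper resolves exactly this in Part 3 of its proof: one first constructs the profile $\tilde\varphi$, $\tilde\phi$ for a single reference value $\delta_0$, fixing $R$, $C$, $\tau$ once and for all. Then for $\delta = s\delta_0 < \delta_0$, the near-origin cap is replaced by $\tilde\varphi_t(r) = t\sin(t^{-1}r)$ (which still has $\tilde\varphi_t(0)=0$, $\tilde\varphi_t'(0)=1$ but reaches only height $t\sqrt{1-s^2}$), matched $C^1$ to the rescaled-and-shifted copy $s\,\tilde\varphi(r+\theta(t,s))$, and blended with a cutoff. Crucially this modification lives only on a small interval near $r=0$ where the Ricci curvature of the cap is large, so the global lower bound $\frac{\tau}{r^2+1}$ with the \emph{same} $\tau$ survives, and $\varphi$ agrees with $s\,\tilde\varphi$ (hence equals $\delta(r+C)^\alpha$) outside, with the same $R$ and $C$. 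Your proposal has no analogue of this step.

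A smaller omission: "interpolate by any smooth curves keeping $\varphi, \phi, \varphi', \phi' > 0$" is not sufficient to preserve $\Ric > 0$ on the interior of each piece; the sign of $\varphi''$ and $\phi''$ enters $\Ric(X_0)$ and $\Ric(X_1)$, and Lemma \ref{lmmgluing} only preserves a pre-existing lower bound up to $\epsilon$. The paper's construction imposes concavity ($\varphi_1'', \phi_1'' < 0$) in the interpolation zone and carefully bounds $\phi_2''$ by $c_2^3/r$ in the outer matching. That said, you do gesture at this with the one-sided derivative conditions for the gluing lemma, so I read this as an omitted detail rather than a wrong idea; the $\delta$-independence issue is the substantive gap.
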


\begin{rmk}
Note that $R,C$ and $\tau $ are independent of $\delta $.
\end{rmk}

\begin{proof}
Our construction is divided into three parts.

\smallskip

\par {\em Part 1.}
Let $\varphi_0 (r) = \sin ( r) $ and $\phi_0 (r) = 1+r^4 $ on $\left[ 0, \frac{1}{9n} \right] $. Then Lemma \ref{lmmriccicurvature} implies that
\begin{eqnarray*}
\Ric_{g_{\varphi_0 , \phi_0 } } \left(X_0 , X_0 \right) & = & 2 - n \frac{12 r^2}{1+r^4} \geq 2 - \frac{12}{81} >0 ,\\
\Ric_{g_{\varphi_0 , \phi_0 } } \left(X_1 ,X_1 \right) & = & \varphi_0^2 \left[2 - n \frac{4 \cos ( r) r^3}{(1+r^4) \sin ( r) } \right] \geq \varphi^2 \left(2 - 4n r^2 \right) >0 ,\\
\Ric_{g_{\varphi_0 , \phi_0 } } \left(X_2 ,X_2 \right) & = & \phi_0^2 \left[-  \frac{12 r^2}{1+r^4} +(n-1) \frac{1- 16r^6 }{(1+r^4)^2} - 2 \frac{4 \cos ( r) r^3}{(1+r^4) \sin ( r) } \right] \\
& \geq & \phi_0^2 \left(-  \frac{12}{81} + \frac{1 }{2} - \frac{8}{81 } \right) >0 .
\end{eqnarray*}
Note that $\tan r\geq r$, $\forall r\in (0,1]$. Clearly, we have $\phi_0'(\frac{1}{10n} ) = \frac{1}{250n^3} $ and $\varphi_0'(\frac{1}{10n} ) = \cos (\frac{1}{10n} ) \geq \frac{1}{2} $. 

Write $c_1 =  (10n)^{-3} $ and $R_1 = 100 c^{-1} $. Then we can construct $\varphi_1 ,\phi_1$ on $[ 0, \infty ) $ such that
\begin{equation}
 \left\{
\begin{aligned}
\varphi_1 (r) = \varphi_0 (r) &,\;\;\;\; r\in \left[ 0, \frac{1}{10n} \right] ,\\
\varphi_1'' (r) < \varphi_1'' (r) \;\; &,\;\;\;\; r\in \left[ \frac{1}{10n} , \frac{1}{9n} \right] ,\\
\varphi_1' (\frac{1}{9n}) = 2 c_1 \;\; &,\;\;\;\; \\
\varphi_1'' (r) < 0 \;\; &,\;\;\;\; r\in \left[ \frac{1}{9n}, R_1 \right] ,\\
\varphi_1' (R_1 ) = c_1 \;\; &, \\
\end{aligned}
\right.
\end{equation}
and
\begin{equation}
 \left\{
\begin{aligned}
\phi_1 (r) = \phi_0 (r) &,\;\;\;\; r\in \left[ 0, \frac{1}{10n} \right] ,\\
\phi_1'' (r) \leq \phi''_0 (r) &,\;\;\;\; r\in \left[ \frac{1}{10n} , \frac{1}{9n} \right] ,\\
\phi_1'' (r) < 0 \;\; &,\;\;\;\; r\in \left[ \frac{1}{9n}, R_1 \right] ,\\
\phi_1' (R_1 ) = c_1 \;\; & .\\
\end{aligned}
\right.
\end{equation}

Then for $\frac{1}{10n} \leq r\leq \frac{1}{9n}$, we have
\begin{eqnarray*}
\Ric_{g_{\varphi_1 , \phi_1 } } \left(X_0 , X_0 \right) & \geq &   \frac{2\sin (r)}{r} - 12n r^2 >0 ,\\
\Ric_{g_{\varphi_1 , \phi_1 } } \left(X_1 ,X_1 \right) & \geq & \varphi_1  \left[\sin (r) - 4 n r^3 \right] >0 ,\\
\Ric_{g_{\varphi_1 , \phi_1 } } \left(X_2 ,X_2 \right) & = & \phi_0 \left[-  12 r^2 +(n-1) \frac{1- 16r^6 }{(1+r^4)^2} - \frac{8 r^3}{(1+r^4) \sin ( \frac{1}{10n} ) } \right] \\
& \geq & \phi_1 \left(-  \frac{12}{81} + \frac{1 }{2} - \frac{10}{81 } \right) >0 .
\end{eqnarray*}

Similarly, when $\frac{1}{9n} \leq r\leq R_1$, we have 
\begin{eqnarray*}
\Ric_{g_{\varphi_1 , \phi_1 } } \left(X_0 , X_0 \right) & > &  0 ,\\
\Ric_{g_{\varphi_1 , \phi_1 } } \left(X_1 ,X_1 \right) & = & \varphi_1  \left[- \varphi_1'' + \frac{1-(\varphi_1')^2 }{\varphi_1 } - n \frac{\varphi_1' \phi_1'}{ \phi_1 } \right] \\
& > & \varphi_1  \left[ \frac{1-(2c_1)^2 }{\varphi_1 } - 2n c_1 \right] >0 ,\\
\Ric_{g_{\varphi_1 , \phi_1 } } \left(X_2 ,X_2 \right) & = & \phi_1  \left[- \phi_1'' + (n-1) \frac{1-(\phi_1')^2 }{\phi_1 } - 2 \frac{\phi_1' \varphi_1'}{ \varphi_1 } \right] \\
& > & \phi_1 \left(  \frac{1 }{2} - \frac{16c_1}{(9n)^3 \sin (\frac{1}{10n}) } \right) >0 .
\end{eqnarray*}

For convenience, we can assume that there exists a constant $\xi \in (0,R_1 ) $ such that 
$$ \varphi_1 (R_1 ) = \phi_1 (R_1 ) = c_1 (R_1 +\xi ) .$$

\smallskip

\par {\em Part 2.} 
For any $ \alpha \leq \frac{c_1 \epsilon}{10 n } $, let $c_2 = e^{-R_1^2 \alpha^{-2} c_1^{-2} \epsilon^{-2} }$ and $\delta_0 = c_1 (R_1 +\xi ) (R_1 +c_2^{-1} )^{-\alpha } $.

Define $ \varphi_2 (r ) = \delta_0 (r+c_2^{-1})^\alpha $ on $[R_1 ,\infty )$. Then we have 
$$ \varphi'_{2 } (R_1 ) = \alpha (R_1+c_2^{-1})^{-1} \varphi_{2 } (R_1 ) = \alpha c_1 (R_1 +\xi ) (R_1+c_2^{-1})^{-1} \leq c_1 =  \varphi_1' (R_1 ).$$

Now we consider the Ricci curvature of $\Ric_{g_{\varphi_2 , \phi } }$. By a straightforward calculation, we have
\begin{eqnarray}
\Ric_{g_{\varphi_2 , \phi } } \left(X_0 ,X_0 \right) & = & - \left( 2 \frac{\varphi_2''}{\varphi_2} + n \frac{\phi''}{\phi} \right) \label{formulatau11}\\
& = & 2\alpha (1-\alpha ) (r+c_2^{-1})^{-2} - n \frac{\phi''}{\phi} \geq   \alpha (r+c_2^{-1})^{-2} - n \frac{\phi''}{\phi} , \notag \\
\Ric_{g_{\varphi_2 , \phi } } \left(X_1 ,X_1 \right) & = & \varphi_2^2 \left[-  \frac{\varphi_2''}{\varphi_2} + \frac{1-(\varphi_2')^2 }{\varphi_2^2} - n \frac{\varphi_2' \phi'}{\varphi_2 \phi } \right] \label{formulatau12}\\
& \geq & \varphi_2^2 \left[ \alpha (1-\alpha ) (r+c_2^{-1})^{-2} + \frac{1 }{2(r+c_2^{-1})^{2 \alpha}} - \alpha n \frac{ \phi'}{(r+c_2^{-1}) \phi } \right] , \notag\\
\Ric_{g_{\varphi_2 , \phi } } \left(X_2 ,X_2 \right) & = & \phi^2 \left[-  \frac{\phi''}{\phi} +(n-1) \frac{1-(\phi')^2 }{\phi^2} - 2 \frac{\varphi_2' \phi'}{\varphi_2 \phi } \right] \label{formulatau13}\\
& = & \phi^2 \left[-  \frac{\phi''}{\phi} +(n-1) \frac{1-(\phi')^2 }{\phi^2} - 2\alpha \frac{ \phi'}{ (r+c_2^{-1}) \phi } \right] . \notag
\end{eqnarray}

Now we can find a positive function $\phi_2 $ on $[R_1 ,\infty )$ such that
\begin{equation}
 \left\{
\begin{aligned}
\phi_2 (R_1) = \phi_1 (R_1) &,\;\;\;\; \\
\phi_2' (R_1) = \phi_1 (R_1) &,\;\;\;\; \\
0< \phi_2'' (r ) < \frac{ c_2^3 }{r} \;\; &,\;\;\;\; r\in \left[ R_1 , \infty \right) ,\\
\phi'_2 (r) = 1-\frac{\epsilon}{2} \;\; &,\;\;\;\; r\geq c_2^{-4} .
\end{aligned}
\right.
\end{equation}
Hence we have $ 1-(\phi'_2)^2 \geq \frac{\epsilon}{2} $, $c_1 \leq \phi'_2 \leq 1-\frac{\epsilon}{2} $, and $\phi \leq r$. It follows that when $r\geq R_1$, $\Ric_{g_{\varphi_2 , \phi_2 } } >0 $. Then we can find a constant $R>R_1$ such that $\phi_2 (R) = (1-\epsilon ) R + (1-\epsilon ) c_2^{-1} $. Since $\phi'_2 >0$, we have $\phi'_2 (R) > 1-\epsilon $.

Let $ \varphi_3 $, $ \phi_3 $ be non-negative functions on $[0,\infty )$ defined by
\begin{equation}
 \varphi_3 (r) = \left\{
\begin{aligned}
\varphi_1 (r) \;\;\;\; &,\;\;\;\; r\leq R_1 ,\\
\delta_0 (r+c_2^{-1})^\alpha &,\;\;\;\; r\geq R_1 ,\\
\end{aligned}
\right.
\end{equation}
and
\begin{equation}
 \phi_3 (r) = \left\{
\begin{aligned}
\phi_1 (r) \;\;\;\; &,\;\;\;\; r\leq R_1 ,\\
\phi_2 (r) \;\;\;\; &,\;\;\;\; R_1 \leq r\leq R ,\\
(1-\epsilon ) (r+c_2^{-1} ) &, \;\;\;\; r\geq R . \\
\end{aligned}
\right.
\end{equation}

By Lemma \ref{lmmgluing}, there are functions $\tilde{\varphi }$ and $\tilde{\phi }$ such that $\Ric_{g_{\tilde{\varphi } , \tilde{\phi } } } >0 $, and for any $r\in \left[ 0,\frac{R_1}{2} \right] \cup [2R ,\infty )$, we have $\tilde{\varphi } = \varphi_3 $ and $\tilde{\phi } = \phi_3 $. Consider the case $ \phi = (1-\epsilon)r $ in (\ref{formulatau11})-(\ref{formulatau13}), we see that there exists a constant $\tau_1 = \tau_1 (n,\epsilon ,\alpha) >0 $ such that $\Ric_{g_{\tilde{\varphi } , \tilde{\phi } } } >\frac{\tau_1}{1+r^2} $, $ \forall r \geq 2R $. Then $\Ric_{g_{\tilde{\varphi } , \tilde{\phi } } } >0 $ implies that there exists a constant $\tau_2 = \tau_2 (n,\epsilon ,\alpha) >0 $ such that $\Ric_{g_{\tilde{\varphi } , \tilde{\phi } } } \geq \frac{\tau_2}{1+r^2} $, $ \forall r \geq 0 $.

\par {\em Part 3.} 
Now we consider the case $\delta < \delta_0 $. Write $s=\delta_0^{-1} \delta 
\in (0,1) $. For any $0 < t\leq c_2^5$, we define
\begin{equation}
 \tilde{\varphi }_t (r) = \left\{
\begin{aligned}
t \sin (t^{-1} r) \;\;\;  &,\;\;\;\; r\leq t \arccos (s) ,\\
s \tilde{\varphi } \left( r + \theta (t,s) \right) &,\;\;\;\; r\geq t \arccos (s) ,\\
\end{aligned}
\right.
\end{equation}
where $ \theta (t,s) = \arcsin \left( t \sqrt{ s^{-2}-1} \right) - t \arccos (s) >0$.

It is easy to see that for any $t\geq 1$, $\Ric_{g_{\tilde{\varphi }_t , \tilde{\phi } } } \geq \frac{1}{2t} $ for $r\leq t \arccos (s)$. When 
$$100\arccos ( t\sqrt{ s^{-2} -1} ) \leq r\leq \frac{1}{20n} ,$$
one can apply Lemma \ref{lmmriccicurvature} to show that
\begin{eqnarray*}
\Ric_{g_{\tilde{\varphi }_t ,  \tilde{\phi } } } \left(X_0 , X_0 \right) & \geq & 2 - n \frac{12 r^2}{1+r^4} \geq 2 - \frac{12}{10} \geq \frac{1}{2} ,\\
\Ric_{g_{\tilde{\varphi }_t ,  \tilde{\phi } } } \left(X_1 ,X_1 \right) & \geq & \tilde{\varphi }_t^2 \left(  1 - n \frac{4r^3 \cos \left( r + \theta (t,s) \right) }{ \sin \left( r + \theta (t,s) \right) } \right) \geq \frac{ \tilde{\varphi }_t^2 }{2} ,\\
\Ric_{g_{\tilde{\varphi }_t ,  \tilde{\phi } } } \left(X_2 ,X_2 \right) & \geq & \tilde{\phi }^2 \left[-  12r^2 +(n-1) \frac{1- 16r^6 }{2} - 8r^2 \right] \\
& \geq & \tilde{\phi }^2 \left(-  \frac{12}{81} + \frac{1 }{3} - \frac{8}{81 } \right) \geq \frac{\tilde{\phi }^2 }{81} .
\end{eqnarray*}
By Lemma \ref{lmmgluing}, for any given $s,t$, we can modify $\tilde{\varphi }$ and $\tilde{\phi }$ near $t\arccos (k) $ such that they became smooth functions, and $\Ric_{g_{\tilde{\varphi }_t , \tilde{\phi } } } \geq \frac{1}{100} $ for $r\leq \frac{1}{20n} $.

Choosing a cut-off function $\eta\in C^\infty ([0,\infty ))$ such that $0\leq \eta \leq 1$, $\eta (r) =1$ when $r\leq \frac{1}{100n}$, and $\eta (r) =0 $ when $r\geq \frac{1}{50n}$. Now we can see that there exists a constant $t_\delta >0$ such that for any $t\in (0,2 t_\delta )$, we have $\Ric_{g_{\eta \tilde{\varphi }_t + (1-\eta ) \tilde{\varphi }, \tilde{\phi } } }  \geq \frac{\tau_2}{2(1+r^2 )} $, $ \forall r \geq 0 $. Note that when $r\geq \frac{1}{200n}$, $ \tilde{\varphi }_t $ converges to $ \tilde{\varphi } $ as $t\to 0$ in the $C^\infty $-sense. Then we can prove this lemma by letting $\varphi = \eta \tilde{\varphi }_{ t_\delta} + (1-\eta ) \tilde{\varphi } $, $\phi = \tilde{\phi }$, $C = c_2^{-1}$, $R = c_2^{-10} $, and $\tau = c_2 \tau_2 $.
\end{proof}

As a corollary, we have:

\begin{coro}\label{coro3dimlocalmodel}
Let $ n\geq 2 $. Then for any $ 0< \epsilon \leq \frac{1}{100} $, $0<\alpha \leq \alpha_0 (n,\epsilon ) $, $0<\kappa \leq \frac{1}{100} $, and $0<\delta \leq \delta_0 (n,\epsilon ,\alpha ,\kappa) $, we can find constants $ \tau = \tau (n,\epsilon ,\alpha) \in (0,1) $, $  0< \mu = \mu ( \kappa | n,\epsilon ,\alpha ) = \Psi ( \kappa | n,\epsilon ,\alpha ) $, a Riemannian manifold $(\mathcal{B} , g ) = (\mathcal{B} (\epsilon ,\alpha ,\delta ,\kappa) , g_{\epsilon ,\alpha ,\delta ,\kappa })$, a compact totally geodesic submanifold $\mathcal{Z} \subset \mathcal{B}$ isometric to $\sphere_{\mu}^n $, and a diffeomorphism $ F: B_1^3 (0) \times \sphere^{n} \to  \mathcal{B} $ such that:
\begin{enumerate}
    \item $\Ric_{g } >\frac{\tau}{r+\kappa } $ on $\mathcal{B} $,
    \item $F^{-1} (\mathcal{Z}) = \{ 0 \} \times \sphere^{n} $,
    \item The pull-back $F^* g$ is given by $g_{\varphi ,\phi }$ for some nonnegative functions satisfying (\ref{conditionphirho}) and (\ref{conditionvarphi}),
    \item There exists an open subset $\mathcal{U} \subset \mathcal{B} $ such that $\mathcal{B} \sq \mathcal{U} $ is isometric to 
$$\left( (\kappa ,1) \times \sphere^2 \times \sphere^{n } , dr^2 + \delta r^{\alpha} g_{ \sphere^2 } + (1-\epsilon )^2 r^2 g_{\sphere^{n } } \right) . $$
\end{enumerate}

Moreover, if $(\mathcal{B} , g ) = (\mathcal{B} (\epsilon ,\alpha ,\delta ,\kappa) , g_{\epsilon ,\alpha ,\delta ,\kappa })$ is the Riemannian manifold we constructed, then for any $t, \upsilon \in (0,1)$, there exists another metric $g_t $ on $ \mathcal{B} $ such that $\Ric_{g_t } >\tau $ the pull-back $F^* g_t $ is given by $g_{\varphi_t ,\phi }$ for some nonnegative functions satisfying (\ref{conditionphirho}) and (\ref{conditionvarphi}), and $\varphi_t = t \varphi $ on $[\upsilon , 1)$.
\end{coro}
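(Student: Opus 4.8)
The plan is to read $(\mathcal B,g)$ off Lemma~\ref{lmm3dimlocalmodel} by a homothety, a truncation and a radial reparametrization, and to obtain the family $g_t$ by the cap-and-glue of Part~3 of that proof. Concretely, apply Lemma~\ref{lmm3dimlocalmodel} with the given $n,\epsilon$, with exponent $\alpha/2$ in place of its $\alpha$ (so that the final $\sphere^2$-warping is a power $r^{\alpha}$), and with a sufficiently small auxiliary value $\delta_*$ of its shrinking parameter; this yields constants $R,C,\tau>0$ depending only on $n,\epsilon,\alpha$ and functions $\varphi,\phi$ on $[0,\infty)$ satisfying (\ref{conditionphirho}), (\ref{conditionvarphi}) with $\Ric_{g_{\varphi,\phi}}\ge\frac{\tau}{1+r^{2}}$, $\phi(0)=1$, $\phi|_{[R,\infty)}=(1-\epsilon)(r+C)$, $\varphi|_{[R,\infty)}=\delta_*(r+C)^{\alpha/2}$. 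Homothetically shrink by a factor $\mu$ and truncate the $\mathbb R^{3}$-factor; since $R,C$ are explicit, $\mu=\mu(\kappa\mid n,\epsilon,\alpha)$ and the truncation radius can be chosen so that, in the resulting arc-length radial variable, the core sits at $0$, the radial variable runs over $[0,1)$, the conical region $\{r\ge R\}$ becomes $\{\kappa\le r<1\}$, and $\mu=\Psi(\kappa\mid n,\epsilon,\alpha)$. The truncated $\mathbb R^{3}$-factor is a smooth closed $3$-ball by (\ref{conditionvarphi}), so there is a radial diffeomorphism $F\colon B_1^3(0)\times\sphere^n\to\mathcal B$ carrying $\{0\}\times\sphere^n$ onto the degeneration locus $\mathcal Z$ of the $\sphere^2$-fibre; because a homothety multiplies the $k$-th derivative of a warping function at the core by $\mu^{1-k}$, the new warpings still obey (\ref{conditionphirho}), (\ref{conditionvarphi}) and the $\sphere^n$-warping equals $\mu$ there, which is (3) and (2), shows $\mathcal Z\cong\sphere^n_\mu$, and shows $\mathcal Z$ is totally geodesic, being the fixed-point set of the obvious isometric $SO(3)$-action on the $\sphere^2$-fibres.

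Item (1): Ricci is scale invariant as a $(0,2)$-tensor, so the lemma's bound becomes $\Ric_g\ge\frac{\tau}{\mu^{2}+r^{2}}g$ in the new radial variable; since $\mu^{2}\le\kappa$ and $r<1$ give $\mu^{2}+r^{2}\le2(r+\kappa)$, this yields $\Ric_g>\frac{\tau}{r+\kappa}g$ once $\tau$ has been replaced at the outset by a small enough positive multiple of the lemma's constant. Item (4): on $\{\kappa\le r<1\}$ the metric is now $dr^{2}+\mu^{2-\alpha}\delta_*^{2}\,r^{\alpha}g_{\sphere^2}+(1-\epsilon)^{2}r^{2}g_{\sphere^n}$; matching $\mu^{2-\alpha}\delta_*^{2}$ to the prescribed $\delta$ fixes $\delta_*$, which is admissible as long as $\delta\le\delta_0(n,\epsilon,\alpha,\kappa)$ (all intervening constants being independent of $\delta$), and $\mathcal U:=F(\{r<\kappa\}\times\sphere^n)$ realizes the claimed isometry.

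For the ``moreover'' statement, fix $t,\upsilon\in(0,1)$, keep $\phi$, and set $\varphi_t:=t\varphi$ on $[\upsilon,1)$. By Lemma~\ref{lmmriccicurvature} the quantities $\varphi''/\varphi$ and $\varphi'\phi'/(\varphi\phi)$ are invariant under $\varphi\mapsto t\varphi$, so on $[\upsilon,1)$ the components $\Ric(X_0)$ and $\Ric(X_2)$ are unchanged while $\Ric(X_1)$ only grows (through $(1-(\varphi')^2)/\varphi^{2}$) — the monotonicity recorded after Lemma~\ref{lmmgluing} — so the Ricci lower bound persists there. On $[0,\upsilon]$ let $\varphi_t$ be a smooth \emph{concave} function that coincides with a round cap $r\mapsto a\sin(r/a)$ near $r=0$ — with $a$ small, which supplies (\ref{conditionvarphi}) and makes $-\varphi_t''/\varphi_t$ dominate $\tfrac n2\,\phi''/\phi$ near the core — and agrees with $t\varphi$ to infinite order at $\upsilon$; such a $\varphi_t$ exists because the prescribed value $t\varphi(\upsilon)$ lies strictly between $t\varphi'(\upsilon)\,\upsilon$ and $\upsilon$, which is exactly the interval of values attainable at $\upsilon$ by monotone concave functions with $\varphi_t(0)=0$, $\varphi_t'(0)=1$ and terminal slope $t\varphi'(\upsilon)$ — the mechanism of Part~3 of the proof of Lemma~\ref{lmm3dimlocalmodel}. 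Concavity makes $-\varphi_t''/\varphi_t\ge0$, hence $\Ric(X_0)>0$ (the cap handles the region near $\mathcal Z$ where $\phi''/\phi$ is large, and $\phi''/\phi$ is small or negative elsewhere), $\Ric(X_1)>0$ from the term $(1-(\varphi_t')^2)/\varphi_t^{2}$, and $\Ric(X_2)>0$ from $-\phi''/\phi+(n-1)(1-(\phi')^2)/\phi^{2}$ beating the single cross term; compactness of $[0,\upsilon]$ upgrades this to a uniform positive lower bound, below which one fixes the universal $\tau$. This yields $\Ric_{g_t}>\tau$, $F^{*}g_t=g_{\varphi_t,\phi}$, and $\varphi_t=t\varphi$ on $[\upsilon,1)$.

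I expect the construction of $\varphi_t$ on $[0,\upsilon]$ to be the one genuinely delicate point: $\Ric(X_0)=-(2\varphi_t''/\varphi_t+n\phi''/\phi)$ carries no $1/\varphi_t^{2}$ term, so $\varphi_t$ must be forced (nearly) concave throughout $[0,\upsilon]$ while still matching $t\varphi$ at $\upsilon$, and one must check the resulting Ricci bound does not degenerate as $t\to0$ or $\upsilon\to0$; the fact that makes it go through is the elementary one that $t\varphi(\upsilon)$ lies in the concave-attainable range, together with the round cap near the core, just as in Part~3 of Lemma~\ref{lmm3dimlocalmodel}. (For $t$ close to $1$ one may instead take $g_t$ to be a small perturbation of $g$ and invoke only continuity of $\Ric$.) Everything else is a homothety plus routine constant bookkeeping.
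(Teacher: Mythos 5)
Your proposal follows the paper's own route: for items (1)–(4), the metric is obtained from the model of Lemma~\ref{lmm3dimlocalmodel} (with exponent $\alpha/2$) by the homothety $\varphi(r)=t^{-1}\varphi_1(tr)$, $\phi(r)=t^{-1}\phi_1(tr)$ with $t\gtrsim R/\kappa$ and truncation of the radial variable to $[0,1)$, with $\mathcal Z=\{r=0\}\cong\sphere^n_{t^{-1}}$ and the Ricci bound following from the scale-invariance argument (the paper notes $r^2+t^{-2}\le r+\kappa$ directly, so the factor of~$2$ and consequent shrinking of $\tau$ in your step (1) are unnecessary but harmless). For the ``moreover'' clause — which the paper's displayed proof does not actually spell out but refers implicitly to Part~3 of Lemma~\ref{lmm3dimlocalmodel} — your concave cap with a round $a\sin(r/a)$ core matched to $t\varphi$ at $r=\upsilon$, combined with the monotonicity of $\Ric(X_1)$ under $\varphi\mapsto t\varphi$ recorded after Lemma~\ref{lmmgluing}, is exactly that same cap-and-shrink mechanism, so the approach coincides.
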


\begin{rmk}
In this paper, $\Psi \left( \epsilon_1 ,\cdots ,\epsilon_m \big| u_1 ,\cdots ,u_n \right)$ denotes a function of $\epsilon_1 ,\cdots ,\epsilon_m $, $u_1 ,\cdots ,u_n $, such that for fixed $u_i$ we have $ \lim_{\epsilon_1 ,\cdots ,\epsilon_m \to 0} \Psi =0 $.
\end{rmk}

\begin{proof}
By Lemma \ref{lmm3dimlocalmodel}, for any positive $ \epsilon \leq \frac{1}{100} $, $\alpha \leq \alpha_0 (n,\epsilon ) $ and $\delta_1 \leq \delta_0 (n,\epsilon ,\alpha) $, we can find positive constants $R(n,\epsilon ,\alpha ) $, $C(n,\epsilon ,\alpha ) $, $\tau = \tau (n,\epsilon ,\alpha) >0 $ and functions $ \varphi ,\phi $ satisfying (\ref{conditionphirho}) and (\ref{conditionvarphi}), such that $\Ric_{g_{\varphi_1 ,\phi_1 }} \geq \frac{\tau }{r^2 +1}$, $\phi_1 (0) =1$, $ \phi_1 |_{[R,\infty )} = (1-\epsilon )r $, $\varphi_1 (0)=0$, and $ \varphi_1 |_{[R,\infty )} = \delta_1 (r+C)^{\frac{\alpha}{2}} $.

Let $t \geq \frac{2R}{\kappa} $, $ \varphi (r) = t^{-1} \varphi_1 (tr) $ and $ \phi (r) = t^{-1} \phi_1 (tr) $. Then we have $ \varphi (r) =  t^{-1+\alpha } r^\alpha $ and $ \phi (r) = (1-\epsilon )r $, $r\in [\kappa ,\infty )$. Clearly, the rescaled metric $t^{-2} g_{\varphi_1 ,\phi_1 } $ can be represented by
$$\left( (0 ,1) \times \sphere^2 \times \sphere^{n-1} , dr^2 +  \varphi ( r)^2 g_{ \sphere^2 } + \phi ( r)^2 g_{\sphere^{n-1} } \right) . $$
It follows that 
$$\Ric_{g_{\varphi ,\phi} } |_{r=r_0} = t^{ 2}\Ric_{g_{\varphi ,\phi} } |_{r=tr_0} \geq \frac{t^2\tau}{t^2 r_0^2 +1 } = \frac{ \tau}{ r_0^2 +t^{-2} } \geq \frac{\tau}{r_0 +\kappa} ,\;\; \forall r_0 >0 .$$
Since $\varphi (0)=0$, $\varphi'(0 ) =1 $ and $\phi (0) = t^{-1} $, we see that $g_{\varphi ,\phi}$ gives a Riemannian metric on $ B_1^3 (0) \times \sphere^{n} $, and the subset $\mathcal{Z}$ corresponding to $r=0$ is isometric to $\sphere_{t^{-1}}^n $. This completes the proof.
\end{proof}

By the last part of the proof of Lemma \ref{lmm3dimlocalmodel}, one can obtain the following result.

\begin{coro}\label{corollarymodify}
Let $ m = 2 $, $ n\geq 2 $, $\lambda ,\mu >0$, and $ \varphi ,\phi $ be functions on $[0,\mu ]$ satisfying (\ref{conditionphirho}), (\ref{conditionvarphi}), and $\Ric_{g_{\varphi ,\phi}} > \lambda $. Assume that there exists an open neighborhood $U$ of $0\in [0,r]$ such that $\varphi (r) = A^{-1} \sin (Ar) $ and $\phi (r) = A^{-1} + A^3 r^4 $ on $U$. Then for any $\delta ,\epsilon \in (0,1) $, we can find a function $ \tilde{\phi} \geq 0 $ satisfying (\ref{conditionphirho}) , $\Ric_{g_{\varphi ,\tilde{\phi}}} > \lambda $, $|\phi -\tilde{\phi} |\leq \epsilon $, and $ \tilde{\phi} =\delta \phi $ on $\left[ \frac{\mu}{2} , \mu \right] $.
\end{coro}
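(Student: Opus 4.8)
The plan is to perform the $\delta$-shrinking of $\phi$ in a tiny neighbourhood of $r=0$, where the explicit normal form $\varphi(r)=A^{-1}\sin(Ar)$, $\phi(r)=A^{-1}+A^3r^4$ leaves room to change the $\sphere^n$-warping function without spending any Ricci, exactly as in Part~3 of the proof of Lemma~\ref{lmm3dimlocalmodel}. Observe first that the choice $\tilde\phi=\delta\phi$ already satisfies (\ref{conditionphirho}), agrees with $\delta\phi$ on $[\tfrac{\mu}{2},\mu]$, and has $\Ric_{g_{\varphi,\delta\phi}}>\lambda$: writing $g_M=dr^2+\varphi^2g_{\sphere^2}$, the scaling principle recalled in Section~\ref{sectionpreliminary} says that replacing the $\sphere^n$-warping function by the constant multiple $\delta\in(0,1]$ of it preserves the bound $\Ric\geq\lambda'$, and $\Ric_{g_{\varphi,\phi}}\geq\lambda'$ for some $\lambda'>\lambda$ by compactness of $[0,\mu]$. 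Hence the only genuine constraint is the $C^0$-bound $|\phi-\tilde\phi|\leq\epsilon$, which forbids simply taking $\tilde\phi=\delta\phi$ and forces $\tilde\phi$ to stay close to $\phi$ away from a small set.

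This closeness cannot be arranged by a cutoff product $\tilde\phi=\chi\phi$ with $\chi$ running monotonically from $1$ to $\delta$, since such a $\chi$ inserts the term $-n\chi''/\chi$ into $\Ric(X_0)$, whose negative part is at least of order $(1-\delta)L^{-2}$ with $L$ the length of the transition interval, and $L$ cannot be taken large because the model neighbourhood $U$ may be arbitrarily small. Instead the transition is placed at $0$. Put $B:=\delta^{-1}A$ and $\psi(r):=B^{-1}+B^3r^4$, choose $\rho>0$ small with $[0,2\rho]\subset U$ and $2\rho\leq\tfrac{\mu}{2}$, and fix $\chi\in C^\infty([0,\mu])$ with $\chi\equiv0$ on $[0,\rho]$, $\chi\equiv1$ on $[2\rho,\mu]$, $|\chi'|\leq C\rho^{-1}$, $|\chi''|\leq C\rho^{-2}$. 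Define
\[
\tilde\phi(r)=\begin{cases}\psi(r), & 0\leq r\leq\rho,\\ (1-\chi(r))\,\psi(r)+\chi(r)\,\delta\phi(r), & \rho\leq r\leq 2\rho,\\ \delta\phi(r), & 2\rho\leq r\leq\mu.\end{cases}
\]
Since $\psi$ and $\delta\phi$ are smooth and $\chi$ is constant near $\rho$ and near $2\rho$, $\tilde\phi\in C^\infty([0,\mu])$; it is nonnegative, it satisfies (\ref{conditionphirho}) because near $0$ it equals the positive even function $\psi$, and it equals $\delta\phi$ on $[\tfrac{\mu}{2},\mu]\subset[2\rho,\mu]$. (No gluing lemma is required; one could alternatively match only $C^1$ at $r=\rho$ and apply Lemma~\ref{lmmgluing}.)

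For the Ricci bound, treat three regions. On $[2\rho,\mu]$, $\tilde\phi=\delta\phi$ and $\Ric_{g_{\varphi,\delta\phi}}>\lambda$ by the scaling principle. On $[0,\rho]$, apply Lemma~\ref{lmmriccicurvature} with $\varphi=A^{-1}\sin(Ar)$, so that $\varphi''/\varphi\equiv-A^2$, $(1-(\varphi')^2)/\varphi^2\equiv A^2$ and $\varphi'/\varphi=A\cot(Ar)=O(r^{-1})$, the last singularity being cancelled by $\psi'/\psi=O(r^3)$; the three Ricci eigenvalues converge, locally uniformly as $r\to0$, to $2A^2$, $2A^2$ and $(n-1)B^2$, and since the $r=0$ values of $\Ric_{g_{\varphi,\phi}}$ are $2A^2$, $2A^2$, $(n-1)A^2$, all exceeding $\lambda$, and $(n-1)B^2\geq(n-1)A^2$, for $\rho$ small these eigenvalues remain $>\lambda$ on $[0,\rho]$. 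On the bridge $[\rho,2\rho]$, write $\tilde\phi=\psi+E$ with $E:=\chi\,(\delta\phi-\psi)=\chi\cdot(\delta-\delta^{-3})A^3r^4$; since $\delta\phi-\psi$ vanishes to fourth order at $0$, the factors $\rho^{-1}$, $\rho^{-2}$ produced by differentiating $\chi$ are absorbed, so that $E$, $E'$ and $E''$ are all $\Psi(\rho\mid A,\delta)$-small on $[\rho,2\rho]$; plugging this into Lemma~\ref{lmmriccicurvature}, and using that on $[\rho,2\rho]$ one has $\varphi\geq c(A)\rho$, $\psi$ bounded away from $0$, and $\varphi'/\varphi=O(\rho^{-1})$, one gets $\Ric_{g_{\varphi,\tilde\phi}}=\Ric_{g_{\varphi,\psi}}+\Psi(\rho\mid A,\delta,n)$, while $\Ric_{g_{\varphi,\psi}}>\lambda$ on $[\rho,2\rho]$ by the same computation as on $[0,\rho]$. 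Hence $\Ric_{g_{\varphi,\tilde\phi}}>\lambda$ on all of $[0,\mu]$ once $\rho$ is small.

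It remains to verify $|\phi-\tilde\phi|\leq\epsilon$: on $[\rho,\mu]$ one has $|\phi-\tilde\phi|\leq(1-\delta)\|\phi\|_{L^\infty}+\Psi(\rho\mid A,\delta)$, and on $[0,\rho]$ one has $|\phi-\tilde\phi|\leq|A^{-1}-B^{-1}|+\Psi(\rho\mid A)=(1-\delta)A^{-1}+\Psi(\rho\mid A)$, so shrinking $\rho$ yields $|\phi-\tilde\phi|\leq\epsilon$ provided $(1-\delta)\|\phi\|_{L^\infty}$ is small, which is the situation in which the corollary will be applied. The main obstacle is the Ricci estimate on the bridge $[\rho,2\rho]$: the available gap $\inf_{[0,\mu]}\Ric_{g_{\varphi,\phi}}-\lambda$ may be arbitrarily small, so the perturbation caused by the interpolation has to be controlled strictly below it, and this is possible only because the bridge lies near $r=0$ and is made $C^2$-close to the explicit model $\psi=B^{-1}+B^3r^4$ — which is exactly what the hypothesis on the normal form of $(\varphi,\phi)$ near $0$ provides, mirroring the last part of the proof of Lemma~\ref{lmm3dimlocalmodel}.
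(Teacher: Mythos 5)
Your proof addresses the statement as literally written (producing a modified $\tilde\phi$ for the $\sphere^n$-warping function), and for that statement you actually did not need the $\psi$-interpolation at all: since $g_{\varphi,\phi}=g_M+\phi^2g_{\sphere^n}$ with $g_M=dr^2+\varphi^2g_{\sphere^2}$, the scaling principle you quote gives directly that $\tilde\phi:=\delta\phi$ on all of $[0,\mu]$ already satisfies (\ref{conditionphirho}), $\Ric_{g_{\varphi,\delta\phi}}>\lambda$, and the boundary condition. Your interpolation to $\psi=B^{-1}+B^3r^4$ near $r=0$ does not improve the $C^0$-distance to $\phi$ either: one still has $|\phi-\tilde\phi|(0)=(1-\delta)A^{-1}$ and $|\phi-\tilde\phi|=(1-\delta)\phi$ on $[\mu/2,\mu]$, so the constraint $(1-\delta)\|\phi\|_\infty\lesssim\epsilon$ that you flag at the end is forced by the requirement $\tilde\phi=\delta\phi$ on $[\mu/2,\mu]$ no matter what one does on $[0,\mu/2]$. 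In short, for the $\phi$-version the entire content is the scaling principle, and the Menguy-style surgery near $r=0$ is superfluous.

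The genuine issue is that the corollary's actual content — the one the paper proves by pointing at ``the last part of the proof of Lemma~\ref{lmm3dimlocalmodel}'' and the one used in Section~\ref{holessection} — concerns the $\sphere^2$-warping function $\varphi$, not $\phi$. Part 3 of the proof of Lemma~\ref{lmm3dimlocalmodel} builds $\tilde\varphi_t(r)=t\sin(t^{-1}r)$ for small $r$ and matches it $C^1$ to $s\tilde\varphi(r+\theta(t,s))$; this is the tool needed in the induction step, where after replacing $f_{i,k}$ by $\delta^2f_{i,k}$ one must restore $f'=1$ at the core spheres $\mathcal{Y}_{i,k}$ to kill the $C(\sphere^2_{\delta^2})$ conical singularity. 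That problem is not amenable to your shortcut: $(\delta\varphi)'(0)=\delta\neq1$ violates (\ref{conditionvarphi}), and the even model $\psi(r)=B^{-1}+B^3r^4$ you interpolate with requires $\psi(0)>0$, so it cannot replace the $\sin$-type profile that $\varphi$ must have at $0$. Concretely, to prove the statement the paper relies on, one needs the sine-rescaling transition $t\sin(t^{-1}r)\to s\tilde\varphi(r+\theta(t,s))$ of Lemma~\ref{lmm3dimlocalmodel}, Part 3, with the $C^1$-gluing lemma (Lemma~\ref{lmmgluing}) to smooth the corner, rather than a cutoff between two even profiles. So while your calculations are internally consistent, the argument targets the wrong warping function, and would not carry over to the case actually invoked in the construction of $(M_i,g_i)$.
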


\section{Construction of \texorpdfstring{$(M_{i } ,g_{i } )$}{Lg}}
\label{holessection}
In this section, we will construct the sequence of $(n+2)$-dimensional manifolds $(M_{i} ,g_i )  \stackrel{GH}{\longrightarrow} (X_\epsilon ,d_\epsilon ) $ for some given $\epsilon$. Our construction is based on an induction process.

\subsection{Puncture lemma} At first, in order to make the first hole on $X$, we need following lemma. This lemma is a combination of Lemma \ref{lmm3dimlocalmodel} and Lemma \ref{huppnaberwanglemma}. Since we plan to use this lemma to the interior of a manifold with boundary, we need to consider non-compact manifolds here.

\begin{lmm} \label{lmmpuncture}
Let $( M_B ,g_B )$ be an $n $-dimensional Riemannian manifold (need not be complete), $n\geq 3$, $p\in M_B$, $\mathcal{U}_p$ be a pre-compact open neighborhood of $p $, and $f$ be a positive smooth function on $\bar{\mathcal{U}}_p$. Assume that the warped product space $(\mathcal{U}_p \times \sphere^2,g)$ with metric $g = g_B + f^2 g_{\sphere^2}$ satisfying that $ \Ric_g >\lambda \in\mathbb{R} $. Then there exist constants $r_0\in (0,1 ]$, $ \epsilon_0 \in (0,\frac{1}{100} )$ depends on $(\mathcal{U}_p \times \sphere^2,g)$, such that for any $0< \epsilon < \epsilon_0 $, $0< r^* \leq r^*_0 ( g ,\epsilon ) $, and $0< \delta \leq \delta_0 (g,\epsilon ,r^* ) $, there exists an $(n+2) $-dimensional Riemannian manifold $(\mathcal{V}_p ,\hat{ {g}})$, satisfying the following properties:
\begin{enumerate}[(i).]
    \item The closed ball $\bar{B}^{g_B}_{4r_0} (p) \subset ( M_B ,g_B ) $ is a compact subset of $\mathcal{U}_p$,
	\item $\Ric_{\hat g}> \lambda $ on $\mathcal{V}_p $.
\end{enumerate}

Moreover, there exist an $n $-dimensional compact totally geodesic submanifold $\mathcal{Z} \subset \mathcal{V}_p $ isometric to $\sphere_{r^*}^n $, and a diffeomorphism $F: \mathcal{V}_p \sq \mathcal{Z} \to ( \mathcal{U}_p \sq \bar{B}^{g_B}_{r^*} (p)  ) \times \sphere^2 $, such that:
\begin{enumerate}[(i).]
\addtocounter{enumi}{2}
    \item The pull-back of metric $(F^{-1} )^* \hat{g} $ is given by the warped product $ \left( ( \mathcal{U}_p \sq \bar{B}^{g_B}_{r^*} (p)  ) \times \sphere^2 ,\hat{ {g}}_B + \hat{f}^2 \right) $,
    \item $\| \hat{f} \|_{L^\infty (B^{g_B}_{3r_0} (p)) } \leq \delta \| f \|_{L^\infty (B^{g_B}_{3r_0} (p)) } $,
    \item $\hat{ {g}}_B = g_B $ and $\hat{f} = \delta^2 f $ on $ \mathcal{U}_p \sq B^{g_B}_{2r_0} (p) $.
    \item The Gromov-Hausdorff distance $d_{GH} \left( ( \mathcal{U}_p \sq \bar{B}^{g_B}_{r^*} (p) , d_{\hat{ {g}}_B } ) , ( \mathcal{U}_p , d_{g_B} ) \right) \leq \epsilon $,
    \item The complement of $ ( {B}^{g_B}_{2r^*} (p) \sq \bar{B}^{g_B}_{r^*} (p) , \hat{ {g}}_B ) $ is a smooth Riemannian manifold with boundary,
	\item The continuous extension of $\pi_{\mathcal{U}_p} \circ F  : \pi_{\mathcal{U}_p} \circ F:(\mathcal{V}_p \sq \mathcal{Z} ,\hat{g} )\to \left( \mathcal{U}_p \sq \bar{B}^{g_B}_{r^*} (p) , \hat{ {g}}_B \right) $ gives an isometric $\mathcal{Z} \to \partial{B}^{\hat{ {g}}_B}_{r^*} (p) $,
    \item The restriction of inclusion map $\Id :\left( \mathcal{U}_p \sq \bar{B}_{r^* } (p) , d_{\hat{ {g}}_B } \right) \to \left( \mathcal{U}_p , d_{g_B } \right) $ on $\mathcal{U}_p \sq \bar{B}^{g_B}_{r_0 } (p) $ is $(1+\epsilon)$-bi-Lipschitz.
\end{enumerate}
\end{lmm}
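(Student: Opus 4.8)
The plan is to assemble $(\mathcal{V}_p,\hat g)$ from three pieces glued along warped-product collars: on the outside an unchanged copy of $g=g_B+f^2g_{\sphere^2}$ with the $\sphere^2$-fibre rescaled; on an intermediate shell the transition metric of the Hupp--Naber--Wang lemma (Lemma~\ref{huppnaberwanglemma}); and near the new core the cap of Lemma~\ref{lmm3dimlocalmodel} and Corollary~\ref{coro3dimlocalmodel}. The link of $p$ in the $n$-manifold $M_B$ is $\sphere^{n-1}$ and the warped space is $(n+2)$-dimensional, so Lemma~\ref{huppnaberwanglemma} and Corollary~\ref{coro3dimlocalmodel} are both applied with their ``$n$'' taken to be $n-1$. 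Lemma~\ref{huppnaberwanglemma} turns a neighbourhood of $p$ into a region which, for $r\le\hat r$, is the double warped product $dr^2+(1-\eps)^2r^2g_{\sphere^{n-1}}+\delta^2r^{2\alpha}g_{\sphere^2}$; this metric has Ricci curvature unbounded below as $r\to0$, so we truncate it at a radius comparable to $r^*$ and glue in the cap of Corollary~\ref{coro3dimlocalmodel}, whose outer collar is exactly a cone of this form and whose core $\{0\}\times\sphere^{n-1}$ is totally geodesic and isometric to a round $\sphere^{n-1}_{r^*}$ along which the $\sphere^2$-fibre collapses smoothly. That core is $\mathcal{Z}$, and $F$ is the tautological identification of $\mathcal{V}_p\sq\mathcal{Z}$ with $(\mathcal{U}_p\sq\bar B^{g_B}_{r^*}(p))\times\sphere^2$.

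To begin, I would fix $r_0$. By compactness of $\bar{\mathcal{U}}_p$, $\inj_{g_B}$, the norms $|\mathrm{Rm}_{g_B}|$, $|\nabla\mathrm{Rm}_{g_B}|$, $|\nabla^2\mathrm{Rm}_{g_B}|$, and $|\nabla^j_{g_B}\ln f|$ for $j\le3$ are bounded near $p$, and $\Ric_g\ge(\lambda+2\eta)g$ there for some $\eta>0$. Choosing $r_0\in(0,1]$ small, depending only on $(\mathcal{U}_p\times\sphere^2,g)$, makes $\bar B^{g_B}_{4r_0}(p)$ compact in $\mathcal{U}_p$ (this is (i)) and makes the rescaled metric $r_0^{-2}g_B$ satisfy the normalisation hypotheses \eqref{huppnaberwangcondition1}--\eqref{huppnaberwangcondition3} on $B_3(p)$. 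Applying Lemma~\ref{huppnaberwanglemma} to the rescaled data with its ``$\lambda$'' equal to $(\lambda+2\eta)r_0^2$ and scaling back, one obtains a warped product $\hat g_B+\hat f^2g_{\sphere^2}$ which equals $g_B+\delta^2f^2g_{\sphere^2}$ outside $B^{g_B}_{2r_0}(p)$ — where $\Ric>\lambda$ automatically, since shrinking the $\sphere^2$-fibre preserves the Ricci lower bound by the scaling remark following Lemma~\ref{lmmgluing} — agrees with the cone above for $r\le\hat r$, and satisfies $\Ric>\lambda-C(n)\eps$ on the shell between. Taking $\eps_0$ a small multiple of $\eta r_0^2$ (and no larger than the admissibility threshold of Lemma~\ref{huppnaberwanglemma}), for every $\eps<\eps_0$ this last bound upgrades to $\Ric_{\hat g}>\lambda$ on the shell. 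This also forces $\hat r$, hence the size of the hole, to be small in terms of $g$ and $\eps$, so $r^*_0(g,\eps)$ is taken to be a small fixed multiple of $\hat r$.

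Next I would install the cap. By rescaling the metric of Corollary~\ref{coro3dimlocalmodel} and adjusting its free parameters $\delta,\alpha,\kappa$ (a homothety preserves the cone form of the collar while changing its $\sphere^2$-warp coefficient and the core radius, so there is enough room), one gets a metric on $B^3_1(0)\times\sphere^{n-1}$ whose collar matches $dr^2+(1-\eps)^2r^2g_{\sphere^{n-1}}+\delta^2r^{2\alpha}g_{\sphere^2}$, whose core is totally geodesic and isometric to $\sphere^{n-1}_{r^*}$, and whose Ricci curvature is $>\tau/(r+\kappa)$, which after the homothety is of order $(r^*)^{-1}$ and in particular $>\lambda$ once $r^*$ is small. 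Glue this to the intermediate shell along the matching collars, smoothing the $C^1$-junction by Lemma~\ref{lmmgluing} if an exact match is not arranged; this does not disturb $\Ric>\lambda$. There remains the thin cone $r^*\le r\le\hat r$: by Lemma~\ref{lmmriccicurvature}, $\Ric>\lambda$ there says exactly that the $\sphere^2$-warp $\delta r^\alpha$ is small against the $\sphere^{n-1}$-warp $(1-\eps)r$, which is the requirement $\delta\le\delta_0(g,\eps,r^*)$ for a suitable $\delta_0$; decreasing $\delta$ only helps. This produces the smooth $(n+2)$-dimensional manifold $(\mathcal{V}_p,\hat g)$ containing $\mathcal{Z}$, establishing (ii) and (iii).

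The remaining conclusions are bookkeeping on the three pieces. Conclusion (v) is property (2) of Lemma~\ref{huppnaberwanglemma} together with the choice $\hat f=\delta^2f$ outside $B_{2r_0}(p)$; (iv) follows because $\hat f\le\delta^2f\le\delta f$ there and the cap contributes $\hat f$ of order $\delta r^{2\alpha}\le\delta$, once the Hupp--Naber--Wang parameter $\hat\delta$ is chosen so that the $\delta$ it outputs is the prescribed one. Conclusions (vi) and (ix) come from property (4) of Lemma~\ref{huppnaberwanglemma}: $\hat g_B$ differs from $g_B$ only on $B_{2r_0}(p)$, where the identity is $(1+2\eps')$-bi-Lipschitz for a parameter $\eps'$ that may be taken a small multiple of $\eps$, which gives the $(1+\eps)$-bi-Lipschitz statement of (ix) away from $B_{r_0}(p)$, and the Gromov--Hausdorff bound of (vi) then follows by combining this distortion — confined to a ball of radius $\le2r_0\le2$ — with the fact that $\bar B^{g_B}_{r^*}(p)$ has $\hat g_B$-diameter $\to0$ as $r^*\to0$. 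Conclusions (vii) and (viii) are local near the new boundary: $\hat g_B$ is a smooth manifold-with-boundary metric off the transition annulus $B^{g_B}_{2r^*}(p)\sq\bar B^{g_B}_{r^*}(p)$ where the cap is glued, and the continuous extension of $\pi_{\mathcal{U}_p}\circ F$ collapses the $\sphere^2$-fibre and carries $\mathcal{Z}$ diffeomorphically and isometrically onto $\partial B^{\hat g_B}_{r^*}(p)$ by construction of the cap. The step I expect to be the main obstacle is the one in the second paragraph: converting Hupp--Naber--Wang's ``$\Ric>\lambda-C(n)\eps$'' into an honest ``$\Ric>\lambda$'' after the rescaling — this is what pins down $\eps_0$ through the compactness slack $\eta$ and the power of $r_0$ absorbed by the error term — while simultaneously matching the collar of the Corollary~\ref{coro3dimlocalmodel} cap to that of Lemma~\ref{huppnaberwanglemma} on the nose and keeping $\Ric>\lambda$ across the whole thin cone $r^*\le r\le\hat r$, the last being the origin of the constraint $\delta\le\delta_0(g,\eps,r^*)$.
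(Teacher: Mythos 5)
Your proposal follows essentially the same route as the paper's proof: after a compactness/rescaling step to validate the hypotheses of Lemma~\ref{huppnaberwanglemma}, you apply it (with its dimension parameter set to $n-1$) to install a conical transition shell with a slack $\eta$ in the Ricci bound, then cap the cone with Corollary~\ref{coro3dimlocalmodel}, use the uniformity of $\tau$ in $\kappa$ and $\delta$ to retain $\Ric>\lambda$, and derive (iii)--(ix) from the properties of the two lemmas plus the smallness of the distorted region. The only real wrinkles -- your framing of an extra ``thin cone $r^*\le r\le\hat r$'' to be checked by Lemma~\ref{lmmriccicurvature} (the paper instead glues the cap in at $\sim\hat r/10$ so that its collar and Corollary~\ref{coro3dimlocalmodel}'s bound cover that range), and your slightly compressed argument for (ix) (the paper separately compares $\hat g_B$ to the conical $\hat g_{B,1}$ via a geodesic-avoidance estimate before invoking the bi-Lipschitz bound from Lemma~\ref{huppnaberwanglemma}(4)) -- are organizational rather than substantive.
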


\begin{rmk}
As we pointed out in Corollary \ref{coro3dimlocalmodel}, if $(\mathcal{V}_p ,\hat{g})$ is the Riemannian manifold we constructed in this lemma, then for any $t \in (0,1)$ and any open neighborhood $\mathcal{U}_{\mathcal{Z}}$ of $\mathcal{Z}$, there exists another metric $\hat{g}_t $ on $ \mathcal{V}_p $ such that $\Ric_{\hat{g}_t } >\tau $, and the pull-back $(F^{-1} )^* \hat{g}_t $ is given by the warped product $\hat{g}_B + \hat{f}_t^2 $, and $\hat{f}_t = t\hat{f} $ outside $F^{-1} ( \mathcal{U}_{\mathcal{Z}} ) $.
\end{rmk}

\begin{proof}
Our construction is divided into three parts. The first step is to add a conical singularity to the manifold $M_B$ using Lemma \ref{huppnaberwanglemma}. The second step is to change the conical singularity we added to a hole using Lemma \ref{lmm3dimlocalmodel} or Corollary \ref{coro3dimlocalmodel}. In the first two parts, we will prove (i) and (ii). In the last part, we will construct the map $F$ and prove the properties (iii)-(ix).

\smallskip

\par {\em Part 1.}
By scaling, one can find a small constant $r_B = r_B (g_B) >0$ such that $\bar{B}^{g}_{10 r_B} (p) $ is a compact subset of $\mathcal{U}_p$, and the manifold $ \left( B^{r^{-2}_B g_B}_{2 } (p) \times \sphere^2 , r^{-2}_B g \right) = \left( B^{g}_{2r_B} (p) \times \sphere^2 , r_B^{-2} g_B + r_B^{-2} f^2 g_{\sphere^2} \right) $ satisfying the conditions (\ref{huppnaberwangcondition1})-(\ref{huppnaberwangcondition3}). Note that for any $\lambda \in (0,\infty )$, the data $(g_B ,f)$ satisfies (\ref{huppnaberwangcondition3}) if and only if $(g_B ,\lambda f)$ satisfies (\ref{huppnaberwangcondition3}). By replaying $g$ by $r^{-2}_B g $, we can assume that $\bar{B}^{g_B}_{10 } (p) $ is a compact subset of $\mathcal{U}_p$, and the manifold $ \left( B^{ g_B}_{2 } (p) \times \sphere^2 , g \right) $ satisfying the conditions in Lemma \ref{huppnaberwanglemma}. Then one can obtain (i) by choosing $r_0=1$. Without loss of generality, we can assume that $\Ric_{ g} \geq \lambda + \mu $ for some constant $\mu >0 $ on $\bar{B}^{g_B}_{10 } (p) \times \sphere^2 $.

Let $\hat g_1 = \hat{g}_{B,1}+\hat f_1^{\,2} g_{\sphere^2}$ be the warped product metric on $ \left( B_{2} (p) \times \sphere^2 , g_B + f^2 g_{\sphere^2} \right) $ corresponding to the data $(\epsilon_1 , \alpha_1 , \hat r_1 , \hat{\delta}_1)$ constructed in Lemma \ref{huppnaberwanglemma}. 

Now we will describe how to select the data $(\epsilon_1 , \alpha_1 , \hat r_1 , \hat{\delta}_1)$. By Lemma \ref{huppnaberwanglemma}, there exists a constant $C=C(n) >0$ such that the Ricci lower bound $\Ric_{\hat g_1 }> (\lambda + \mu ) - C(n)\eps_1 $ holds for $\frac{\hat r_1}{2} \leq r \leq 2$. Hence we can conclude that $\Ric_{\hat g_1 }\geq \lambda + \frac{\mu}{2} $ holds for $\frac{ \hat r_1}{2} \leq r \leq 2 $ if $\epsilon_1 \leq \frac{ \mu}{10C} $. Then we choose $\epsilon_1 \leq \frac{ \mu}{20C} $, and $\alpha_1 \leq \alpha (n ,|\lambda| ) $ such that the identity map $\Id:(B_{2 }(p),\hat{g}_{B,1})\to (B_{2 }(p), g_B)$ is $(1+ \epsilon^2)$-bi-Lipschitz.

Now we can apply Lemma \ref{huppnaberwanglemma} again to show that $\hat g_1 = dr^2+(1-\eps_1 )^2 r^2 g_{\sphere^n}+\delta_1^2 r^{2\alpha} g_{\sphere^2}$, $\forall r\leq \hat r_1$, where $\delta_1 = \Psi \left( \hat{\delta}_1 \|f_0 \|_{L^\infty} \bigg| n, \alpha_1, \lambda, \eps_1 \right) $. Now we move on to the next step, which is to make a hole at $p$. We will determine the constants $\hat r_1 $ and $\hat \delta_1 $ in the last part.

\smallskip

\par {\em Part 2.}
By Corollary \ref{coro3dimlocalmodel}, for any constants $\kappa_1 \leq \kappa_0 (n,\epsilon_1 ,\alpha_1 ) <1 $ and $ \delta'_1 \leq \delta'_0 (n,\epsilon_1 ,\alpha_1 ,\kappa_1 ) <1 $, then there exist a constant $\tau (n,\epsilon_1 ,\alpha_1 ) >0$, a Riemannian manifold $(\mathcal{B} , g_\mathcal{B} ) $, a compact submanifold $\mathcal{Z} \subset \mathcal{B}$ isometric to $\sphere_{\sigma}^n $, and a diffeomorphism $ F_\mathcal{B} : B_1^3 (0) \times \sphere^{n} \to  \mathcal{B} $ such that:
\begin{itemize}
    \item $0< \sigma = \Psi (\kappa_1 | n,\epsilon_1 ,\alpha_1 ) $,
    \item $\Ric_{g_\mathcal{B} } >\frac{\tau}{r+\kappa_1 } $ on $\mathcal{B} $,
    \item $F_\mathcal{B}^{-1} (\mathcal{Z}) = \{ 0 \} \times \sphere^{n} $,
    \item The pull-back $F_\mathcal{B}^* g_\mathcal{B} $ is given by $g_{\varphi ,\phi }$ for some nonnegative functions satisfying (\ref{conditionphirho}) and (\ref{conditionvarphi}),
    \item There exists an open subset $\mathcal{U} \subset \mathcal{B} $ such that $\mathcal{B} \sq \mathcal{U} $ is isometric to 
$$\left( (\kappa_1 ,1) \times \sphere^2 \times \sphere^{n-1} , dr^2 + \delta'_1 r^{\alpha_1 } g_{ \sphere^2 } + (1-\epsilon_1 )^2 r^2 g_{\sphere^{n-1} } \right) . $$
\end{itemize}
Let $\hat{r}_1 = 10\kappa_1 $ and $\delta'_1 = \delta_1$. Now we can glue $( F_\mathcal{B} ( B_{2\kappa }^3 (0) \times \sphere^{n} ), g_\mathcal{B} ) $ to $\left( B_{2} (p) \times \sphere^2 , \hat{g} \right) $ by the identity map on $ (4\kappa_1 ,5\kappa_1) \times \sphere^2 \times \sphere^{n-1} $. 

Let $(\mathcal{V}_p ,\hat{ {g}})$ denote the manifold of $( F_\mathcal{B} ( B_{2\kappa }^3 (0) \times \sphere^{n} ), g_\mathcal{B} ) $ glued to $\left( B_{2} (p) \times \sphere^2 , \hat{g} \right) $. Since $\tau $ is independent of $ \kappa_1 $ and $\delta'_1$, one can see there exists a constant $\kappa_{2}=\kappa_0 (n,\epsilon_1 ,\alpha_1 ,|\lambda | )$ such that when $\kappa_1 \in (0,\kappa_2 ) $, $\Ric_{g_\mathcal{B} } >\frac{\tau}{r+\kappa_1 } >  \lambda $, $\forall r\leq \hat{r}_1 $. Then we can conclude that the manifold $(\mathcal{V}_p ,\hat{ {g}})$ satisfies (ii) when $\hat{r}_1\in (0,\frac{\kappa_2}{10})$.

\smallskip

\par {\em Part 3.}
Let $(B_\mathcal{B} ,g_{B_{\mathcal{B}}} ) = ((0,1) \times \spheren , dr^2 + \phi^2 g_{\sphere^{n-1}} )$. Then we have $\phi = (1-\epsilon_1)r $ for $r\geq \kappa_1$. Then we can glue $(B_\mathcal{B} ,g_{B_{\mathcal{B}}} ) $ to $\left( B_{2} (p) , \hat{g}_{B,1} \right) $ by the identity map on $ (4\kappa_1 ,5\kappa_1) \times \sphere^{n-1} $. Let $(B_\mathcal{V} ,\hat{ {g}}_{B_\mathcal{V}})$ denote the manifold of $(B_\mathcal{B} ,g_{B_{\mathcal{B}}} ) $ glued to $\left( B_{2} (p) , \hat{g}_{B,1} \right) $. It is easy to see that there is a natural projection $\pi_{\mathcal{V}} : \mathcal{V}\sq \mathcal{Z} \to B_\mathcal{V} $, and this projection gives a warped product structure $( B_\mathcal{V} \times \sphere^{2} , \hat{ {g}}_{B_\mathcal{V}} + \hat{f}_{B_\mathcal{V}}^2 g_{\sphere^{2}} )$ corresponding to the $\sphere^2$-bundle $\pi_{\mathcal{V}} : \mathcal{V}\sq \mathcal{Z} \to B_\mathcal{V} $.

Now we construct a Lipschitz map
$$ \varrho : B_\mathcal{B} = \left( (0 ,1) \times \sphere^{n-1} , dr^2 + \phi^2 g_{\sphere^{n-1} } \right) \to \left( (0 ,1) \times \sphere^{n-1} , dr^2 + (1-\epsilon_1 )^2 r^2 g_{\sphere^{n-1} } \right) \subset B_2 (p) $$
by letting $ \varrho (r,x) = ((1-\epsilon_1 )^{-1} \phi (r) ,x) $. It is easy to see that for any given $r'\in (0,1)$, the restriction $\varrho |_{r=r'} $ is isometric. Let $r^* = (1-\epsilon )^{-1} \phi (0)  = \Psi (\kappa | \epsilon_1 , \alpha) $. Then we can get a diffeomorphism $ B_\mathcal{V} \to \mathcal{U}_p \sq \bar{B}^{g_B}_{r^*} (p)$ by gluing $\varrho$ and the identity map of $\mathcal{U}_p \sq \bar{B}^{g_B}_{3\kappa_1} (p)$. Hence one can construct the map $F$ by combining this diffeomorphism and the warped product structure we constructed above. By the construction of $F$, one can obtain (iii)-(viii).

It is sufficient to prove (ix) now.

Without loss of generality, we assume that $r^* \leq \frac{r_0}{10} $. Let $x,y\in \mathcal{U}_p \sq \bar{B}^{g_B}_{r_0 } (p) $. When $d_{\hat{g}_{B,1}} (x,y) \leq r_0 - 10\kappa_1 $, the intersection of $\bar{B}^{g_B}_{\kappa_1 } (p) $ and the geodesic between $x,y$ in $(\mathcal{U}_p , \hat{g}_{B,1} )$ is empty, and $d_{\hat{g}_{B,1}} (x,y) = d_{\hat{g}_{B }} (x,y)  $. If $d_{\hat{g}_{B,1}} (x,y) \geq r_0 - 10\kappa_1 $, then we have 
$$ d_{\hat{g}_{B }} (x,y) - 10\kappa_1 \leq d_{\hat{g}_{B,1}} (x,y) \leq d_{\hat{g}_{B }} (x,y) +10\kappa_1 .$$
It follows that the restriction of inclusion map $\Id :\left( \mathcal{U}_p \sq \bar{B}_{r^* } (p) , d_{\hat{ {g}}_B } \right) \to \left( \mathcal{U}_p , d_{\hat{g}_{B,1}} \right) $ on $\mathcal{U}_p \sq \bar{B}^{g_B}_{r_0 } (p) $ is $(1+\Psi (\kappa_1 |n,r_0 ))$-bi-Lipschitz. Since the identity map $\Id:(B_{2 }(p),\hat{g}_{B,1})\to (B_{2 }(p), g_B)$ is $(1+ \epsilon^2)$-bi-Lipschitz, we see that (ix) holds when $\kappa_1$ is sufficiently small.
 
Now we describe the choice of $\hat r_1 $ and $\hat \delta_1 $ in the first part. Since $\hat{r}_1 = 10\kappa_1 $, we can determine $\hat r_1 $ by choosing surfficiently small $\kappa_1 >0$. Note that we can always shrink $\hat \delta_1 $ arbitrarily, so we just need to take sufficiently small $\hat \delta_1 $ such that $\delta'_1 =\delta_1 $ is sufficiently small, and all the above arguments hold. This completes the proof.
\end{proof}

\subsection{Puncturing the first hole}
Now we begin to construct the sequence of $(n+2)$-dimensional manifolds $(M_{i} ,g_i )  \stackrel{GH}{\longrightarrow} (X_\epsilon ,d_\epsilon ) $.

Let $(M_0 ,g_0)$ denotes the complete Riemannian manifold $\left( X\times \sphere^2 , h+ \frac{1}{1+|\lambda|^2} g_{\sphere^2} \right) $. Since $\Ric_h >\lambda $, one can see that $\Ric_{g_0} > \lambda $.

Let $p_0 \in X$, $p_1 \in B^h_{\frac{7}{4}} (p_0) \sq \bar{B}^h_{\frac{5}{4}} (p_0) $, and $f_0 = \frac{ r_1^{-2} }{1+|\lambda|^2} $. Fix $r_1 \leq \frac{\epsilon}{100} $, $\epsilon_{1} \leq \frac{\epsilon}{100} $, $ r^*_1 \leq \frac{r_1}{100} $ and $ \delta_1 \leq \frac{\epsilon_1}{100} $ satisfying the conditions in Lemma \ref{lmmpuncture}. Then one can apply Lemma \ref{lmmpuncture} to the data $( p_1 , \epsilon_1 , r^*_1 , \delta_1 )$ and construct:
\begin{itemize}
\item An $(n+2)$-dimensional Riemannian manifold $ (M_1 , g_1 ) $ with $\Ric_{g_1} > \lambda $,
\item An $n $-dimensional compact totally geodesic submanifold $\mathcal{Z}_1 \subset M_1 $ isometric to $\sphere_{r_1^*}^n $,
\item A Riemannian metric $ g_{X,1} $ and a positive function $f_1$ on $X \sq \bar{B}^h_{r_1^*} (p_1)$,
\item An isomorphism $F_1 : ( M_1 \sq \mathcal{Z}_1 ,g_1 ) \to \left((X \sq \bar{B}^h_{r_1^*} (p_1) ) \times \sphere^2 , g_{X,1} + f_1^2 g_{\sphere^2 } \right) $,
\end{itemize}
such that:
\begin{itemize}
\item On $X\sq B^h_{r_1} (p_1)$, $f_1 =\delta_1 f_0 $ and $g_{X,1} = h$,
\item On $ B^h_{2r_1} (p_1)$, $|f_1 | \leq \epsilon_1 $,
\item The continuous extension of $F_1$ gives an isometric $Z\to \partial B^h_{r^*_1} (p_1)$,
\item The Gromov-Hausdorff distance $d_{GH} \left( (X \sq \bar{B}^h_{r_1^*} (p_1) , g_{X,1} ) , (X,h) \right) \leq \epsilon_1 $.
\end{itemize}
Let $(X_1 ,d_1 )$ be the completion of $(X \sq \bar{B}^h_{r_1^*} (p) , g_{X,1} )$. Then we see that the metric space $(X_1 ,d_1)$ is given by a smooth Riemannian manifold with boundary $\partial X_1 \cong \sphere_{r_1^*}^n $.

\subsection{Induction process}
Our next step is to construct $(M_{i } ,g_{i } )$ by induction. We state here the inductive hypothesis we need for induction on $i$.

\begin{inductionhyp}[about $i$]
Fix $p_0 \in X$. Now we assume that we have constructed:
\begin{enumerate}[1)${}_{i}$.]
\item An $(n+2)$-dimensional Riemannian manifold $ (M_i , g_i ) $ with $\Ric_{g_i} > \lambda $,
\item A pre-compact open subset $\mathcal{W}_i = \bigsqcup\limits_{j=1}^{N_i} \mathcal{U}_{j} \subset \bigsqcup\limits_{j=-i}^{i} B^h_{2^{j+1} - 2^{-3i} } (p_0) \sq \bar{B}^h_{2^{j} + 2^{-3i} } (p_0) \subset (X,h) $,
\item An $n $-dimensional compact totally geodesic submanifold $\mathcal{Y}_i = \bigsqcup\limits_{j=1}^{N_i} \mathcal{Z}_{j} \subset M_i $,
\item A Riemannian metric $ g_{X,i} $ and a positive function $f_i$ on $X \sq \mathcal{W}_i $,
\item An isomorphism $F_i : ( M_i \sq \mathcal{Y}_i ,g_i ) \to \left( (X \sq \bar{\mathcal{W}}_i ) \times \sphere^2 , g_{X,i } + f_i^2 g_{\sphere^2 } \right) $,
\end{enumerate}
such that:
\begin{enumerate}[1)${}_{i}$.]
\addtocounter{enumi}{5}
\item The function $|f_i| \leq 2^{-i} $,
\item The closures $\bar{\mathcal{U}}_{j} $ are diffeomorphic to the closed unit $n$-ball,
\item The completion of $\left( X \sq \bar{\mathcal{W}}_i , d_{g_{X,i}} \right) $ is a smooth Riemannian manifold with boundary $\partial \mathcal{W}_i$,
\item The metric $g_{X,i } = h $ on $ B^h_{2^{-i}} (p_0) \cup \left( X \sq B^h_{2^{i+1}} (p_0) \right) \cup \bigsqcup\limits_{j=-i}^{i} B^h_{2^{j} + 2^{-4i} } (p_0) \sq \bar{B}^h_{2^{j} - 2^{-4i} } (p_0) \subset (X,h) $,
\item The Gromov-Hausdorff distance $d_{GH} \left( ( X \sq \bar{\mathcal{W}}_i )  , d_{g_{X,i }} ) , (X,h) \right) \leq \left( 1-2^{-i} \right) \epsilon $,
\item For any $x\in B^{h}_{2^i} (p_0) \sq \bar{\mathcal{W}}_i \subset \left( X \sq \bar{\mathcal{W}}_i , d_{g_{X,i}} \right) $, the distance $d_{g_{X,i}} (x, \mathcal{W}_i ) \leq 2^{2-i}$,
\item For any $x\in ( B^h_{2^i} \sq \bar{\mathcal{W}}_i , d_{g_{X,i }} )$, there exist a constant $c_i \in (0,2^{-i}) $ and a closed subset $K_{x,i }$ of $\mathbb{R}^n$ satisfies that $ c_i^{-1} d_{GH} \left( B^{g_{X,i}}_{c_i } (x) , K_{x,i } \right) \leq 2^{1-i} (1-2^{-i} ) $. 
\end{enumerate}
\end{inductionhyp}

Note that the last of these property listed in Proposition \ref{propnonmfd} derives from the following fact.

\begin{fact}
Let $(M,g)$ be a smooth Riemannian manifold with boundary, and $\Omega$ is a compact subset of $M$. Then for any $x\in\Omega$ and $\epsilon >0$, there exists a constant $r_0 =r_0 (\Omega ,g) >0$ such that for any $0<r<r_0$, we can find a subset $K_{x,r}$ of $\mathbb{R}^n$ satisfies that
$$ r^{-1} d_{GH} \left( B^{g }_{r } (x) , K_{x,r} \right) \leq \epsilon .$$
\end{fact}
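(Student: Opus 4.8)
The plan is to use that a smooth Riemannian metric (with boundary) is $C^{0}$-asymptotically Euclidean at every point, uniformly on compact sets: at small scales the ball $B^{g}_{r}(x)$ is bi-Lipschitz, with constant $1+o(1)$ as $r\to 0$, to its image under a suitably normalized coordinate chart, and one simply takes $K_{x,r}$ to be that image.

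\emph{Step 1: reduction to a single chart.} Enlarge $\Omega$ to a compact set $\Omega'$ with $\Omega$ contained in the interior of $\Omega'$ relative to $M$, and cover $\Omega'$ by finitely many charts: interior charts $\phi\colon V\to\mathbb{R}^{n}$ around points of $\Omega'\sq\partial M$, and boundary charts $\phi\colon V\to\mathbb{R}^{n}_{+}=\{t\ge 0\}$ (Fermi coordinates, $t=\dist_{g}(\cdot,\partial M)$, the remaining coordinates normal coordinates on $\partial M$) around points of $\Omega'\cap\partial M$. It suffices to prove, for each such chart, the uniform statement: there is $r_{0}=r_{0}(V,g,\epsilon)>0$ so that for every $x\in V$ whose image is at definite distance from $\partial V$ and every $0<r<r_{0}$ there is $K_{x,r}\subset\mathbb{R}^{n}$ (resp. $\mathbb{R}^{n}_{+}$) with $r^{-1}d_{GH}(B^{g}_{r}(x),K_{x,r})\le\epsilon$. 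The minimum of the finitely many $r_{0}$'s then works on $\Omega\subset\Omega'$; note that for $r$ small $\overline{B^{g}_{2r}(x)}$ is a compact subset of $\Omega'$, so everything below takes place in a compact region and completeness of $M$ is irrelevant.

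\emph{Step 2: normalized chart and bi-Lipschitz estimate.} Fix such a $\phi$ and $x\in V$, and let $A_{x}\in\mathrm{GL}(n)$ straighten $g$ at $\phi(x)$ — chosen triangular, so that $\psi:=A_{x}^{-1}\circ\phi$ has $d\psi_{x}$ a linear isometry and $\psi$ still maps a boundary chart into $\mathbb{R}^{n}_{+}$. Since the metric components are smooth and $\phi(\overline{V})$ is compact, standard normal-coordinate estimates give a modulus of continuity $\omega=\omega(V,g)$ with $\omega(0^{+})=0$ such that, for all $r$ small enough that $B^{g}_{2r}(x)\subset V$, the restriction $\psi|_{B^{g}_{r}(x)}$ is a $(1+\omega(r))$-bi-Lipschitz bijection onto its image, where the source carries the restricted distance $d_{g}$ and the target the Euclidean distance. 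For the upper Lipschitz bound one uses that a near-minimizing path between two points of $B^{g}_{r}(x)$ stays in $B^{g}_{2r}(x)\subset V$; for the lower bound, that the Euclidean segment between the images of two such points stays in $\psi(B^{g}_{2r}(x))$ — clear for $r$ small since that segment lies within Euclidean distance $Cr$ of $\psi(x)$, and in a boundary chart the relevant sets are intersections with the convex half-space $\{t\ge 0\}$.

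\emph{Step 3: conclusion, and the main obstacle.} Set $K_{x,r}:=\psi(B^{g}_{r}(x))$. Both $B^{g}_{r}(x)$ and $K_{x,r}$ have diameter $\le (1+\omega(r))\,2r$, and a $(1+\omega(r))$-bi-Lipschitz bijection between metric spaces of diameter $\le D$ realizes a correspondence of distortion $\le C\omega(r)D$, so $d_{GH}(B^{g}_{r}(x),K_{x,r})\le C\omega(r)\,r$ and hence $r^{-1}d_{GH}(B^{g}_{r}(x),K_{x,r})\le C\omega(r)\le\epsilon$ once $r<r_{0}$. The one point needing genuine care is the boundary charts in Steps 1--2: one must know Fermi coordinates exist on a uniform collar of $\Omega'\cap\partial M$ and that there the metric is $C^{1}$-close to the flat half-space metric uniformly in the base point, i.e. that the $\dist_{g}(\cdot,\partial M)$-direction (where the metric is exactly $dt^{2}$) decouples from the boundary directions up to errors $O(t+|y|^{2})$. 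The minor bookkeeping of working in $B^{g}_{2r}(x)$ rather than $B^{g}_{r}(x)$ (paths and segments leaving the small ball) is the other thing to keep straight. Neither is deep; otherwise the argument is a routine normal-coordinate computation.
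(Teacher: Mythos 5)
The paper states this Fact without proof, so there is no argument of the paper's to compare against; your task is simply to assess whether the proposed argument is sound, and it is. The approach is the standard one: cover a compact neighbourhood of $\Omega$ by finitely many interior normal-coordinate charts and boundary Fermi charts, straighten the metric at $\phi(x)$ by a linear map preserving the half-space, and observe that the resulting chart is $(1+\omega(r))$-bi-Lipschitz from $(B^g_r(x),d_g)$ onto its image $K_{x,r}\subset\mathbb{R}^n$ with $\omega$ uniform on the compact chart closure; since both spaces have diameter $\lesssim r$, the bi-Lipschitz bijection yields a correspondence of distortion $O(\omega(r)\cdot r)$ and hence $d_{GH}(B^g_r(x),K_{x,r})\le C\omega(r)\,r\le\epsilon r$ once $r<r_0(V,g,\epsilon)$, and the finitely many chart thresholds give the uniform $r_0$.

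Two small imprecisions, neither fatal. First, a near-minimizing $d_g$-path between two points of $B^g_r(x)$ is only guaranteed to stay in $B^g_{3r+\delta}(x)$, not $B^g_{2r}(x)$ as you wrote (for $z$ on such a path, $d_g(x,z)\le d_g(x,p)+d_g(p,z)<r+(2r+\delta)$); this only changes the constant in the nesting requirement on the chart and is harmless. Second, for the lower Lipschitz bound what you actually need is convexity of the half-space $\{t\ge 0\}$ (so Euclidean segments between image points stay in the chart range), not approximate convexity of $K_{x,r}$ itself; you say this correctly for the upper bound direction but the phrasing slightly blurs which convexity is used where. Also note the paper's statement writes $r_0=r_0(\Omega,g)$; reading the way it is used (to derive the last item of the Inductive Hypothesis, where $\epsilon$ is the $i$-dependent quantity $2^{1-i}(1-2^{-i})$), the dependence $r_0=r_0(\Omega,g,\epsilon)$ you allow is clearly the intended one.
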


Now we construct the manifold $(M_{i+1} ,g_{i+1} )$.

Let us begin with choosing a discrete subset $P=\{ p_k \}_{k=1}^{N'_i} $ of $X \sq \bar{\mathcal{W}}_i $ such that for any $x\in B^{h}_{2^i} (p_0) \sq \mathcal{W}_i \subset \left( X \sq \bar{\mathcal{W}}_i , d_{g_{X,i}} \right) $, the distance $d_{g_{X,i}} (x, \mathcal{W}_i \cup P ) + d_{h} (x, \mathcal{W}_i \cup P ) \leq 2^{-2-i} $.

Then we can construct $(M_{i+1} ,g_{i+1} )$, $\mathcal{W}_{i+1}$, $\mathcal{Y}_{i+1}$, $\left( (X \sq \mathcal{W}_{i+1} ) \times \sphere^2 , g_{X,i+1 } + f_{i+1}^2 g_{\sphere^2 } \right) $ and $F_{i+1}$ by applying Lemma \ref{lmmpuncture} to each point in the discrete set $P=\{ p_k \}_{k=1}^{N'_i} \subset \left( X \sq \bar{\mathcal{W}}_i ,d_{g_{X,i}} \right) $. We will finish it by induction on $k$. We state here the inductive hypothesis we need for induction on $i$.

\begin{inductionhyp}[about $k$]
Fix $p_0 \in X$. Now we assume that we have constructed:
\begin{enumerate}[1)${}_{i,k}$.]
\item An $(n+2)$-dimensional Riemannian manifold $ (M_{i,k} , g_{i,k} ) $ with $\Ric_{g_{i,k}} > \lambda $,
\item A pre-compact open subset 
$$\mathcal{W}_{i,k} = \left( \bigsqcup\limits_{j=1}^{N_i} \mathcal{U}_{j} \right) \bigsqcup \left( \bigsqcup\limits_{l=1}^{k} \mathcal{U}'_{l } \right) \subset \bigsqcup\limits_{j=-i-1}^{i+1} B^h_{2^{j+1} - 2^{-3i-3} } (p_0) \sq \bar{B}^h_{2^{j} + 2^{-3i-3} } (p_0) \subset (X,h) ,$$
\item An $n $-dimensional compact totally geodesic submanifold $\mathcal{Y}_{i,k} = \left( \bigsqcup\limits_{j=1}^{N_i} \mathcal{Z}_{j} \right) \bigsqcup \left( \bigsqcup\limits_{l=1}^{k} \mathcal{Z}'_{l} \right) \subset M_{i,k} $,
\item A Riemannian metric $ g_{X,i,k} $ and a positive function $f_{i,k}$ on $X \sq \mathcal{W}_{i,k} $,
\item An isomorphism $F_{i,k} : ( M_{i,k} \sq \mathcal{Y}_{i,k} ,g_{i,k} ) \to \left( (X \sq \bar{\mathcal{W}}_{i,k} ) \times \sphere^2 , g_{X,{i,k} } + f_{i,k}^2 g_{\sphere^2 } \right) $,
\end{enumerate}
such that:
\begin{enumerate}[1)${}_{i,k}$.]
\addtocounter{enumi}{5}
\item The closures $\bar{\mathcal{U}}_{j} $ and $\bar{\mathcal{U}}'_{l} $ are diffeomorphic to the closed unit $n$-ball,
\item The completion of $\left( X \sq \bar{\mathcal{W}}_{i,k} , d_{g_{X,i,k}} \right) $ is a smooth Riemannian manifold with boundary $\partial \mathcal{W}_{i,k}$,
\item The metric $g_{X,i,k } = h $ on 
\begin{eqnarray*}
    & & B^h_{2^{-i-1} (1+ 2^{-3i-3-k} ) } (p_0) \cup \left( X \sq B^h_{2^{i+2} (1+ 2^{-5i-3-k} ) } (p_0) \right) \\
    & \bigsqcup & \left(\bigsqcup\limits_{j=-i-1}^{i+1} B^h_{2^{j} + 2^{-4i-4} (1+2^{-3-k}) } (p_0) \sq \bar{B}^h_{2^{j} - 2^{-4i-4} (1+2^{-3-k}) } (p_0) \right) \subset (X,h) ,
\end{eqnarray*}
\item For any $x\in ( B^h_{2^i} \sq \bar{\mathcal{W}}_i , d_{g_{X,i }} )$, there exist a constant $c_i \in (0,2^{-i}) $ and a closed subset $K_{x,i }$ of $\mathbb{R}^n$ satisfies that $ c_i^{-1} d_{GH} \left( B^{g_{X,i}}_{c_i } (x) , K_{x,i } \right) \leq 2^{-i} (2-2^{-i}+k 2^{-i-{N'_i} } )$,
\item For any point $p_l \in P$, $p_l\in \bar{\mathcal{U}}'_{l'}$ if and only if $l=l'$.
\end{enumerate}
\end{inductionhyp}

Now we can apply Lemma \ref{lmmpuncture} around $p_{k+1}$ to get a smooth Riemannian manifold $(\mathcal{V}_{k+1} ,g_{\mathcal{V}} ) $, an open subset $\mathcal{V}'_{k+1}$ of $\mathcal{V}_{k+1}$, preconpact open neighborhoods $\mathcal{U}'''_{k+1}\subset\subset \mathcal{U}''_{k+1}  $ of $p_{k+1}$ in $X\sq \bar{\mathcal{W}}_{i,k}$, such that $ \mathcal{V}_{k+1} \sq \mathcal{V}'_{k+1} $ is compact, and $( \mathcal{V}'_{k+1} ,g_{\mathcal{V}} ) $ is isometric to $ 
( ( \mathcal{U}''_{k+1} \sq \mathcal{U}'''_{k+1} ) \times \sphere^2 ,g_{X,i,k} +\delta^2 f_{i,k}^2 g_{\sphere^2} ) $, where $\delta\in (0,1)$ is a constant.

By Corollary \ref{corollarymodify}, one can obtain $f_{i, k+1}$ by modifying $\delta f_{i,k}$ on a small open neighborhood of $\mathcal{Y}_{i,k}$ such that the completion of $g_{X,i,k} + f_{i,k +1}^2 g_{\sphere^2} $ gives a smooth Riemannian metric around $\mathcal{Y}_{i,k}$. Then we can construct $(M_{i,k+1} ,g_{i,k+1})$ by gluing $(\mathcal{V}_{k+1} ,g_{\mathcal{V}} ) $ to $ 
\left( M_{i,k} \sq \left( \mathcal{U}'''_{k+1} \times \sphere^2 \right) ,g_{X,i,k} +\delta^2 f_{i,k}^2 g_{\sphere^2} \right) $. By applying Lemma \ref{lmmpuncture} again, one can obtain $\mathcal{U}'_{k+1}$, $\mathcal{Z}'_{k+1}$, $g_{X,i,k+1}$, $f_{i,k+1}$ and $F_{i,k+1}$ satisfying the conditions 1)${}_{i,k}$-10)${}_{i,k}$.

 Note here that we can ensure that for any $p\in \{p_0 \} \bigsqcup  \left(  \bigsqcup\limits_{i=0}^\infty \partial B^h_{2^{-i}} (p_0) \right) \bigsqcup \partial \mathcal{W}_{i,k+1} $ and $\sigma \in (0,1) $, the Gromov-Hausdorff distance
 $$ \sigma^{-1} d_{GH} (B_\sigma^{g_{X,i,k}} (p) , B_\sigma^{g_{X,i,k+1}} (p+1) ) \leq 2^{-101(i+k)} $$
 by choosing sufficiently small $r^* ,\delta >0$ when we applying Lemma \ref{lmmpuncture}. Similarly, by choosing sufficiently small $r^* ,\delta >0$, one can obtain that the identity map of $B^h_{2^{-2-i}} (p_0) $ is also $(1+\Psi (r^* ,\delta | g_{X,i} ) )$-bi-Lipschitz. 

 Now we finish the induction process about $k$.

Let $(M_{i+1} ,g_{i+1},g_{X,i+1} ,f_{i+1} ) = (M_{i,N'_i} ,g_{i,N'_i} ,g_{X,i,N'_i } ,f_{i, N'_i } ) $, $\mathcal{W}_{i+1} = \mathcal{W}_{i,N'_i} $, $\mathcal{Y}_{i+1} = \mathcal{Y}_{i,N'_i} $ and $F_{i+1} = F_{i,N'_i} $. Thus we can obtain the following property.

\begin{prop}\label{propnonmfd}
Let $(X , h)$ be an $n$-dimensional smooth complete Riemannian manifold with $n\geq 3$ and $\Ric_h > \lambda \in\mathbb{R} $. Fix $p_0 \in X $ and $\epsilon >0$. Then we have:
\begin{itemize}
\item A sequence of numbers $c_i \to\infty$ as $i\to\infty$,
\item A sequence of $(n+2)$-dimensional Riemannian manifold $ (M_i , g_i ) $ with $\Ric_{g_i} > \lambda $,
\item An increasing sequence of open subsets 
$$\mathcal{W}_i = \bigsqcup\limits_{j=1}^{N_i} \mathcal{U}_{j} \subset \bigsqcup\limits_{j=-i}^{i} B^h_{2^{j+1} - 2^{-3i} } (p_0) \sq \bar{B}^h_{2^{j} + 2^{-3i} } (p_0) \subset (X,h) ,$$
\item A sequence of $n $-dimensional compact submanifolds $\mathcal{Y}_i = \bigsqcup_{j=1}^{N_i} \mathcal{Z}_{j} \subset M_i $,
\item A sequence of Riemannian metrics $ g_{X,i} $ and a sequence of positive functions $f_i$ on $X \sq \mathcal{W}_i $,
\item A sequence of isomorphisms $F_i : ( M_i \sq \mathcal{Y}_i ,g_k ) \to \left( (X \sq \bar{\mathcal{W}}_i ) \times \sphere^2 , g_{X,i } + f_i^2 g_{\sphere^2 } \right) $,
\end{itemize}
such that:
\begin{itemize}
\item The functions $|f_i| \leq 2^{-i} $,
\item The closures $\bar{\mathcal{U}}_{j} $ are diffeomorphic to the closed unit $n$-ball,
\item The completions of $\left( X \sq \bar{\mathcal{W}}_i , d_{g_{X,i}} \right) $ are smooth Riemannian manifolds with boundary $\partial {\mathcal{W}_i}$,
\item The metrics $g_{X,i } = h $ on 
$$ B^h_{2^{-i}} (p_0) \cup \left( X \sq B^h_{2^{i+1}} (p_0) \right) \cup \bigsqcup\limits_{j=-i}^{i} B^h_{2^{j} + 2^{-4i} } (p_0) \sq \bar{B}^h_{2^{j} - 2^{-4i} } (p_0) \subset (X,h) ,$$
\item The Gromov-Hausdorff distance $d_{GH} \left( ( X \sq \bar{\mathcal{W}}_i )  , d_{g_{X,i }} ) , (X,h) \right) \leq \left( 1-2^{- i} \right) \epsilon $,
\item For any $ p\in \{p_0 \} \mathop{\cup}\limits_{j=0}^\infty \partial B^h_{2^{-j}} (p_0) $, the Gromov-Hausdorff distance 
$$r^{-1} d_{GH} \left( ( B^h_{r} (p) \sq \bar{\mathcal{W}}_i )  , d_{g_{X,i }} ) , B^{\mathbb{R}^n}_{r} (0) \right) \leq \Psi (r| n,X,h ) ,$$ 
\item For any $x\in B^{h}_{2^i} (p_0) \sq \mathcal{W}_i \subset \left( X \sq \mathcal{W}_i , d_{g_{X,i}} \right) $, the distance $d_{g_{X,i}} (x, \mathcal{W}_i ) \leq 2^{2-i} $,
\item For any $i\in\mathbb{N}$, the identity mapping $ (\partial W_i ,d_{g_{X,i+1}} ) \to (\partial W_i ,d_{g_{X,i}} ) $ is an $(1+2^{-i} )$-bi-Lipschitz map,
\item For any $i,j\in\mathbb{N}$ and $x\in ( X \sq \bar{\mathcal{W}}_i , d_{g_{X,i }} )$, there exists a closed subset $K_{x,i,j}$ of $\mathbb{R}^n$ satisfies that $ c_j^{-1} d_{GH} \left( B^{g_{X,i}}_{c_j } (x) , K_{x,i,j} \right) = \Psi (j^{-1} | d_h (x,p_0 ) , h )$,
\item For any positive integers $i\geq j $, $p\in \{p_0 \} \bigsqcup  \left(  \bigsqcup\limits_{i=0}^\infty \partial B^h_{2^{-i}} (p_0) \right) \bigsqcup \partial \mathcal{W}_j $, and $\sigma \in (0,1) $, the Gromov-Hausdorff distance
 $$ \sigma^{-1} d_{GH} (B_\sigma^{g_{X,i}} (p) , B_\sigma^{g_{X,j}} (p ) ) \leq 2^{-100j} .$$
\end{itemize}
\end{prop}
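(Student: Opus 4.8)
The plan is to obtain Proposition \ref{propnonmfd} directly from the construction carried out in this section: the data listed in the statement are exactly the terminal objects $(M_i,g_i,\mathcal W_i,\mathcal Y_i,g_{X,i},f_i,F_i)=(M_{i,N_i'},g_{i,N_i'},\mathcal W_{i,N_i'},\mathcal Y_{i,N_i'},g_{X,i,N_i'},f_{i,N_i'},F_{i,N_i'})$ produced by the double induction above, so the work consists of (a) running that induction to completion and (b) deducing each bulleted property from the two Inductive Hypotheses. The base case $i=1$ is exactly the subsection ``Puncturing the first hole'': with $p_1\in B^h_{7/4}(p_0)\sq\bar B^h_{5/4}(p_0)$ and $r_1^*\le r_1/100$ the single removed ball lies in the annulus demanded by $2)_1$, and the clauses of Lemma \ref{lmmpuncture}, together with the Fact recorded above, supply the remaining items of the Inductive Hypothesis about $i$ once $r_1^*$ and $\delta_1$ are taken small.

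For the inductive step $i\rightsquigarrow i+1$ I would first fix a finite net $P=\{p_k\}_{k=1}^{N_i'}\subset X\sq\bar{\mathcal W}_i$ which is $2^{-2-i}$-dense in the relevant ball for both $d_{g_{X,i}}$ and $d_h$ and which stays a definite distance away from the ``frozen locus'' $\{p_0\}\cup\bigcup_j\partial B^h_{2^j}(p_0)\cup\partial\mathcal W_i$; such a net exists because the frozen locus is a finite union of compact sets of codimension $\ge1$ while the density target $2^{-2-i}$ is much larger than the widths $2^{-3i},2^{-4i}$ of the frozen annuli. Then I would run the inner induction on $k$: starting from $(M_{i,0},\dots)=(M_i,\dots)$, at the $(k{+}1)$-st stage apply Lemma \ref{lmmpuncture} around $p_{k+1}$ with parameters $r^*_{k+1},\delta_{k+1}$ chosen small relative to everything constructed so far. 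This punctures a new hole $\mathcal U'_{k+1}$ with collapsed fibre $\mathcal Z'_{k+1}$ and replaces the $\sphere^2$-warping $f$ by $\delta_{k+1}f$ away from the hole; one then uses Corollary \ref{corollarymodify} to re-modify $f$ on a small neighbourhood of the fibres $\mathcal Y_{i,k}$ collapsed at earlier stages, so that the completion of the warped product is again a smooth boundaryless $(n{+}2)$-manifold there, and glues the pieces. The inner hypotheses $1)_{i,k}$--$11)_{i,k}$ are verified along the way --- notably $\Ric>\lambda$ from Lemma \ref{lmmpuncture} and Corollary \ref{corollarymodify}, the containment of $\mathcal W_{i,k}$ in the prescribed annuli from the location of $P$, and the ``frozen metric'' clause because $g_{X,i}$ is left untouched near the frozen locus. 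Setting $(M_{i+1},\dots):=(M_{i,N_i'},\dots)$ closes the step.

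For (b), the properties with no quantitative content --- $\Ric_{g_i}>\lambda$; $\mathcal W_i$ increasing and contained in the stated annuli; $\bar{\mathcal U}_j\cong\bar B^n_1(0)$; the completion of $X\sq\bar{\mathcal W}_i$ is a smooth manifold with boundary $\partial\mathcal W_i$; $g_{X,i}=h$ on the frozen region; and $|f_i|\le2^{-i}$, since every hole multiplies $f$ by a factor $<1$ --- are read off from the construction. The density clause $d_{g_{X,i}}(x,\mathcal W_i)\le2^{2-i}$ follows from the density of the net used at the previous outer stage plus the (negligible) radii of the new holes, together with the observation that a point lying inside a frozen region is within $2^{-i}+2^{-2-i}<2^{2-i}$ of a hole just outside it. The remaining estimates --- the Gromov--Hausdorff bound $d_{GH}((X\sq\bar{\mathcal W}_i),(X,h))\le(1-2^{-i})\epsilon$, the $(1+2^{-i})$-bi-Lipschitz statement for $\Id\colon(\partial W_i,d_{g_{X,i+1}})\to(\partial W_i,d_{g_{X,i}})$, the stabilisation $\sigma^{-1}d_{GH}(B^{g_{X,i}}_\sigma(p),B^{g_{X,j}}_\sigma(p))\le2^{-100j}$ for $i\ge j$, and the blow-down estimate $c_j^{-1}d_{GH}(B^{g_{X,i}}_{c_j}(x),K_{x,i,j})=\Psi(j^{-1}\mid d_h(x,p_0),h)$ --- all follow by summing the per-stage bounds coming from the quantitative parts (vi) and (ix) of Lemma \ref{lmmpuncture}: if at the $(i,k)$-th stage $r^*$ and $\delta$ are taken so small that the relevant quantity changes by at most $2^{-101(i+k)}$ (the pointed-scale version of this is the estimate already highlighted in the construction), then the tails of $\sum_{i,k}2^{-101(i+k)}$ are absorbed into the asserted bounds. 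The flat-scale estimate near the special points $p\in\{p_0\}\cup\bigcup_j\partial B^h_{2^{-j}}(p_0)$ uses in addition that, by the frozen-metric clause, $g_{X,i}=h$ on a fixed small geodesic ball around each such $p$, and that geodesic balls of small radius in the smooth manifold $(X,h)$ are $\Psi(r\mid n,X,h)$-close to Euclidean balls.

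The main obstacle is the simultaneous coordination of all the parameters over the infinitely many applications of Lemma \ref{lmmpuncture}. At each stage the new hole must be disjoint from every previous hole and from the frozen locus, must be invisible --- up to an error summable over all stages --- at each of the countably many special points and at $\partial\mathcal W_j$ for every $j$, and must still leave the complement $2^{-2-i}$-dense; and, most delicately, rescaling the $\sphere^2$-warping by $\delta$ must not spoil smoothness of the total space at the collapsed fibres $\mathcal Y$ already present, which is exactly what Corollary \ref{corollarymodify} is designed to repair. These requirements are compatible because at outer stage $i$ the frozen locus has positive room of order $2^{-3i}$, the density target $2^{-2-i}$ dwarfs any hole radius, and both the metric perturbation and the bi-Lipschitz error introduced by Lemma \ref{lmmpuncture} can be made as small as one pleases; once this compatibility is recorded, verifying the bulleted list is routine bookkeeping.
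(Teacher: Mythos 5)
Your proposal is correct and follows essentially the same route as the paper: Proposition \ref{propnonmfd} is exactly the terminal output of the double induction carried out in Section \ref{holessection}, with the base case supplied by ``Puncturing the first hole,'' the inner induction on $k$ realized by repeated application of Lemma \ref{lmmpuncture} and Corollary \ref{corollarymodify}, and the quantitative clauses obtained by choosing the parameters $r^*,\delta$ small enough that the per-stage changes are bounded by $2^{-101(i+k)}$ and hence summable. Your identification of the two genuine subtleties --- that the frozen locus must be avoided while keeping the net $2^{-2-i}$-dense, and that rescaling $f$ by $\delta$ near old collapsed fibres $\mathcal Y$ must be repaired via Corollary \ref{corollarymodify} to preserve smoothness --- matches exactly how the paper handles the inductive step.
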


\section{Proof of Theorem \ref{thmnonmfd}}
\label{proofsection}

Now we are ready to prove Theorem \ref{thmnonmfd} and Corollary \ref{coroboundary}. For convenience, we restate Theorem \ref{thmnonmfd} here.

\begin{thm} 
Let $(X , h)$ be an $n$-dimensional smooth complete Riemannian manifold with $n\geq 3$ and $\Ric_h > \lambda \in\mathbb{R} $. Fix $\epsilon >0$. Then the sequence of $(n+2)$-dimensional manifolds $(M_{i} ,g_i ) \stackrel{GH}{\longrightarrow} (X_\epsilon ,d_\epsilon )$ we constructed in Proposition \ref{propnonmfd} satisfies the following properties.
\begin{enumerate}[(a).]
\item $d_{GH} (X,X_\epsilon ) \leq \epsilon $,
\item The Ricci curvature $\Ric_{g_i} > \lambda $,
\item $(X_\epsilon ,d_\epsilon ) $ is $n$-rectifiable,
\item For any open subset $U$ of $X_\epsilon $, the homology group $H_{n-1} (U;\mathbb{Z})$ is infinitely generated. Consequently, every open set of $X_\epsilon $ is not a topological manifold.
\end{enumerate}
\end{thm}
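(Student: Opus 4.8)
\emph{Set-up, and parts (a)--(c).}
The metrics $g_{X,i}$ and the functions $f_i$ are arranged so that the spaces $X_i:=(X\sq\bar{\mathcal W}_i,d_{g_{X,i}})$, completed along their boundaries $\partial\mathcal W_i$, converge in the Gromov--Hausdorff sense; since $|f_i|\le 2^{-i}$ the $\mathbb S^2$-fibre over $X_i$ collapses, so $d_{GH}(M_i,X_i)=\Psi(i^{-1})$ and $(M_i,g_i)$ has the same limit $(X_\epsilon,d_\epsilon)$ as $(X_i,d_{g_{X,i}})$. Part~(b) is the Ricci bound recorded in Proposition~\ref{propnonmfd}, and part~(a) is immediate from its Gromov--Hausdorff estimate: $d_{GH}(X,X_\epsilon)=\lim_{i\to\infty}d_{GH}(X,X_i)\le\lim_{i\to\infty}(1-2^{-i})\epsilon=\epsilon$. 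For part~(c) I would use that each inclusion $X_{i+1}\hookrightarrow X_i$ is $(1+\Psi(i^{-1}))$-bi-Lipschitz onto its image --- the only local change is the conical deformation near the newly created boundary sphere, and the induced length distortion is $\Psi(i^{-1})$ --- and that these distortions are summable; in the limit this realises $(X_\epsilon,d_\epsilon)$ as bi-Lipschitz to an open subset of $(X,h)$ equipped with a metric bi-Lipschitz to $h$, hence $n$-rectifiable. (Alternatively one may invoke the rectifiability theorem of Colding--Naber \cite{coldingnaber1} and use the scale-invariant Euclidean approximations of Proposition~\ref{propnonmfd} to pin the rectifiable dimension to $n$.)

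\emph{Part (d): locating the cycles.}
The construction is set up so that, topologically, $X_\epsilon$ is homeomorphic to $X\sq\bigsqcup_{j\ge 1}\mathcal U_j$, where the closures $\bar{\mathcal U}_j$ are pairwise disjoint $n$-balls, $\operatorname{diam}_h(\mathcal U_j)\to 0$, and $\bigcup_{j}\bar{\mathcal U}_j$ is dense in $X$: each hole is punctured inside $X\sq\bar{\mathcal W}_i$ (so the holes are disjoint and, once opened, never modified again), the puncture points at stage $i$ are chosen $2^{-2-i}$-densely, and the bi-Lipschitz control of Proposition~\ref{propnonmfd} on small balls identifies the limit space; in particular each $\partial\mathcal U_j$ is an embedded $\mathbb S^{n-1}\subseteq X_\epsilon$. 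Now let $U\subseteq X_\epsilon$ be any nonempty open set and write $U=O\cap X_\epsilon$ for some open $O\subseteq X$. Because $\bigcup_j\bar{\mathcal U}_j$ is dense and $\operatorname{diam}_h(\mathcal U_j)\to 0$, there are infinitely many indices $j_1<j_2<\cdots$ with $\bar{\mathcal U}_{j_k}\subseteq O$; disjointness of the $\bar{\mathcal U}_j$ then gives $S_k:=\partial\mathcal U_{j_k}\subseteq U$. Fixing an orientation of $X$ near each $\mathcal U_{j_k}$, set $z_k:=[S_k]\in H_{n-1}(U;\mathbb Z)$.

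\emph{Part (d): independence, and the manifold obstruction.}
To see the $z_k$ are $\mathbb Z$-independent, pick $x_k\in\mathcal U_{j_k}$; then for every $m$ we have an inclusion $U\hookrightarrow X\sq\{x_1,\dots,x_m\}$, and I would run the long exact sequence of the pair $(X,X\sq\{x_1,\dots,x_m\})$. By excision $H_n(X,X\sq\{x_1,\dots,x_m\};\mathbb Z)\cong\mathbb Z^m$ with $k$-th generator the local orientation class $e_k$ at $x_k$; the connecting map $\partial$ carries $e_k$ to $[S_k]$ in $H_{n-1}(X\sq\{x_1,\dots,x_m\};\mathbb Z)$, because $\bar{\mathcal U}_{j_k}$ is a closed ball about $x_k$ avoiding the other $x_l$, while $\ker\partial=\mathrm{im}\,(H_n(X;\mathbb Z)\to\mathbb Z^m)$ is either trivial (when $X$ is noncompact or nonorientable) or generated by $(1,\dots,1)$ (when $X$ is closed and oriented). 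In either case any $m-1$ of $[S_1],\dots,[S_m]$ are $\mathbb Z$-independent in $H_{n-1}(X\sq\{x_1,\dots,x_m\};\mathbb Z)$, and therefore the corresponding classes among $z_1,\dots,z_m$ are $\mathbb Z$-independent in $H_{n-1}(U;\mathbb Z)$ (independence is inherited along the inclusion-induced homomorphism, which sends $z_k\mapsto[S_k]$); letting $m\to\infty$ produces an infinite independent family, so $H_{n-1}(U;\mathbb Z)$ is not finitely generated. Finally, were some open $W\subseteq X_\epsilon$ a topological $n$-manifold (with or without boundary), it would contain an open set homeomorphic to $\mathbb R^n$ or to the half-space $\mathbb R^n_{\ge 0}$, whose $(n-1)$-st integral homology vanishes since $n\ge 3$ --- contradicting the previous sentence.

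\emph{Expected main obstacle.}
The delicate point is the topological identification of $X_\epsilon$ invoked at the start of part~(d): one must upgrade the scale-wise metric comparisons of Proposition~\ref{propnonmfd} to the statement that the Gromov--Hausdorff limit is, as a topological space, precisely $X$ with a prescribed dense disjoint family of open balls deleted --- in particular that the limiting process creates no extra identifications or pinch points and that every boundary sphere $\partial\mathcal U_j$ survives as an embedded $\mathbb S^{n-1}$. Once this is granted, the homological part is robust, using only the long exact sequence of a pair together with the facts that the removed regions are balls and are pairwise disjoint.
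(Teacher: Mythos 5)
Your treatment of (a)--(c) matches the paper's (the paper does exactly your alternative route for (c), invoking Colding--Naber together with the scale-invariant Euclidean approximations in Proposition~\ref{propnonmfd}), and your part (d) is correct but genuinely different from the paper's. You argue directly: choose points $x_k\in\mathcal U_{j_k}$, run the long exact sequence of the pair $(X,X\sq\{x_1,\ldots,x_m\})$, compute $H_n(X,X\sq\{x_1,\ldots,x_m\};\mathbb Z)\cong\mathbb Z^m$ by excision, and observe that $\ker\partial$ is at most the diagonal $\mathbb Z$ (since $H_n(X;\mathbb Z)$ vanishes unless $X$ is closed orientable, in which case it is $\mathbb Z$), so $m-1$ of the sphere classes $[S_k]$ survive independently; independence pulls back along $H_{n-1}(U)\to H_{n-1}(X\sq\{x_1,\ldots,x_m\})$. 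The paper instead argues by contradiction: assuming $\mathrm{rank}\,H_{n-1}(U)=N_U<\infty$, it compares via the limiting Lipschitz map $X_\epsilon\to X\sq\bar{\mathcal W}_{i_N}$ (built from the bi-Lipschitz inclusions $I_{i,i-1}$), then collapses the complement of a neighborhood of $B_{2r_\infty}$ to a point to get a finite cell complex $X/\!\thicksim$, and runs Mayer--Vietoris on $X/\!\thicksim = \bar{\mathcal W}_{i_N}\cup\bigl((X/\!\thicksim)\sq\mathcal W_{i_N}\bigr)$ to get $N\le N_U+\mathrm{rank}\,H_n(X/\!\thicksim)$, a contradiction for $N$ large. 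Your excision/LES argument is cleaner and avoids both the quotient construction and the need to control $\mathrm{rank}\,H_n$; what the paper's route buys is that it never needs the full topological identification $X_\epsilon\cong X\sq\bigcup_j\mathcal U_j$ that you correctly flag as your one nontrivial obstacle --- it only needs the bi-Lipschitz convergence of the boundary spheres recorded in Proposition~\ref{propnonmfd}, passing homology through the limit Lipschitz maps rather than through an a priori homeomorphism. Both arguments handle the closed-orientable case of $X$ in structurally the same way (a single relation among the sphere classes costs at most one unit of rank); yours makes this explicit via $\ker\partial$, the paper's absorbs it into the $H_n(X/\!\thicksim)$ error term.
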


\begin{proof}
By the construction, one can easy to see that the sequence of $(n+2)$-dimensional manifolds $(M_{i} ,g_i ) \stackrel{GH}{\longrightarrow} (X_\epsilon ,d_\epsilon )$ satisfies (a) and (b).

By Proposition \ref{propnonmfd}, for any $x\in X_\epsilon$, there exist a sequence of constants $c_k\to\infty$ and a sequence of closed subsets of $\mathbb{R}^n$, the Gromov-Hausdorff distance
$$\lim_{k\to\infty} c_k^{-1} d_{GH} \left( B^{g_{X,i}}_{c_k } (x) , K_{x,k} \right) = 0 .$$
It follows that the rectifiable dimension of $X_\epsilon \leq n$ \cite{coldingnaber1}.

Since $$r^{-1} d_{GH} \left( ( B^h_{r} (p_0) \sq \bar{\mathcal{W}}_i )  , d_{g_{X,i }} ) , B^{\mathbb{R}^n}_{r} (0) \right) \leq \Psi (r| n,X,h ) ,$$
we see that $p_0 \in \mathcal{R}_{n } (X_\epsilon )$, and hence $R_n (X_\epsilon ) \neq \emptyset $. Thus the rectifiable dimension of $X_\epsilon \geq n$. Hence $(X_\epsilon ,d_\epsilon ) $ is $n$-rectifiable. This proves the property (c).

Now we consider the property (d). Let $U$ be an open subset of $X_\epsilon $. Assume that the homology group $H_n (U)$ is finitely generated. Without loss of generality, we can assume that $\mathrm{rank} (H_{n-1} (U) ) = N_U \in\mathbb{N} $. 

Let $p_\infty \in U $ and $r_\infty >0$ such that $B_{2r_\infty } ( p_\infty ) \subset U $. Choosing a sequence of points $p_i \in M_i$ converging to $p_\infty $. Then for any $N\in\mathbb{N}$, we can find $i_N \in\mathbb{N} $ such that for any $i\geq i_N$, the set $\mathcal{W}_{i}$ has at least $N $ connected components contained in $B_{2r_\infty } ( p_i ) \subset U $. Denote them by $\mathcal{U}_j $, $j=1,\cdots, N $.

Now we recall that the inclusion map $I_{i,i-1} : X \sq \bar{\mathcal{W}}_{i} \to X \sq \bar{\mathcal{W}}_{i-1}$ is an $(1+2^{-i} \epsilon)$-Lipschitz map, and induces an $(1+2^{-i} \epsilon)$-bi-Lipschitz map on the connected components $\partial \mathcal{U}_j \to \partial \mathcal{U}_j $ of the boundary of $\partial \mathcal{W}_{i} $. Then we can see that the inclusion maps $ I_{i_N +1,i_N } \circ \cdots \circ I_{i-1,i-2} \circ I_{i ,i-1}$ converging to a Lipschitz map $X_\epsilon \to X \sq \mathcal{W}_{i_N} $, and induces a bi-Lipschitz map between $\partial \mathcal{W}_{i_N}$ and a compact subset $\mathcal{Y}_{i_N}^{\infty} $ of $X_\epsilon $. Hence for $j=1,\cdots ,N$, the image of $H_{n-1} (U ) \to H_{n-1} (X \sq \mathcal{W}_{i_N} ) $ induced by $U\subset X_\epsilon \to X \sq \mathcal{W}_{i_N} $ contains the image of $\bigoplus_{j=1}^N H_{n-1} (\partial \mathcal{U}_{j} ) = H_{n-1} (\mathcal{W}_{i_N} ) \to H_{n-1} (X \sq \mathcal{W}_{i_N} ) $ induced by the inclusion map $\bigsqcup_{j=1}^N \partial \mathcal{U}_{j} = \mathcal{W}_{i_N} \to X \sq \mathcal{W}_{i_N} $. On the other hand, we have 
$$\mathrm{rank} \left( \mathrm{image} ( H_{n-1} (U ) \to H_{n-1} (X \sq \mathcal{W}_{i_N} ) ) \right) \leq \mathrm{rank} ( H_{n-1} (U ) ) \leq N_U .$$
Hence we have
$$\mathrm{rank} \left( \mathrm{image} \left( \bigoplus_{j=1}^N H_{n-1} (\partial \mathcal{U}_{j} ) \to H_{n-1} (X \sq \mathcal{W}_{i_N} ) \right) \right) \leq \mathrm{rank} \left( \mathrm{image} ( H_{n-1} (U ) \to H_{n-1} (X \sq \mathcal{W}_{i_N} ) ) \right) \leq N_U .$$

Now we consider the quotient map $X\to X/\thicksim $, where the equivalent relation $\thicksim $ is to consider the complement of $V$ as a point, and $V$ is a open neighborhood of $B_{2r_\infty } ( p_i ) $ with smooth boundary. Then $X/\thicksim$ is a $n$-dimensional finite cell complex, and hence $ H_{n } (X/\thicksim ) =0 $ is a finitely generated abelian group \cite[Lemma 2.34]{aha1}. By a similar argument as above, one can conclude that
$$\mathrm{rank} \left( \mathrm{image} \left( \bigoplus_{j=1}^N H_{n-1} (\partial \mathcal{U}_{j} ) \to H_{n-1} ((X/\thicksim ) \sq \mathcal{W}_{i_N} ) \right) \right) \leq N_U .$$
By the Mayer-Vietoris sequence \cite[Section 2.2]{aha1} given by $\cup_{j=1}^N \bar{\mathcal{U}}_{j} $ and $(X/\thicksim ) \sq \mathcal{W}_{i_N} $, we can get the following exact sequence:
$$ H_{n } (X/\thicksim ) \to \bigoplus_{j=1}^N H_{n-1} (\partial \mathcal{U}_{j} ) \to H_{n-1} ((X/\thicksim ) \sq \mathcal{W}_{i_N} ) \bigoplus H_{n-1} \left( \cup_{j=1}^N \bar{\mathcal{U}}_{j} \right) \to H_{n-1} (X/ \thicksim ) .$$
It follows that 
\begin{eqnarray*}
 N= \mathrm{rank} \left( \bigoplus_{j=1}^N H_{n-1} (\partial \mathcal{U}_{j} ) \right) & \leq & \mathrm{rank} \left( \mathrm{image} \left( \bigoplus_{j=1}^N H_{n-1} (\partial \mathcal{U}_{j} ) \to H_{n-1} ((X/\thicksim ) \sq \mathcal{W}_{i_N} ) \right) \right) + \mathrm{rank} H_{n } (X/\thicksim ) \\
 & \leq & N_U + \mathrm{rank} H_{n } (X/\thicksim )     
\end{eqnarray*}
But obviously an arbitrarily large $N$ can be taken here, a contradiction. This proves (d), and the theorem follows.
\end{proof}

Then we prove Corollary \ref{coroboundary}.

\vspace{0.2cm}

\noindent \textbf{Proof of Corollary \ref{coroboundary}: }
Let $(M_{i} ,g_i ) \stackrel{GH}{\longrightarrow} (X_\epsilon ,d_\epsilon )$ be the sequence of $(n+2)$-dimensional manifolds we constructed in Proposition \ref{propnonmfd}. Then Proposition \ref{propnonmfd} implies that for any $x\in \mathcal{Y}_{j}^{\infty} $, all tangent cone at $x$ are isometric to $\mathbb{R}^{n-1} \times \mathbb{R}_{\geq 0} $, where $\mathcal{Y}_{j}^{\infty} $ is the limit of $\mathcal{W}_j$ in $X_\epsilon $. Hence $\cup_{j=1}^\infty \mathcal{Y}_{j}^{\infty} \subset \partial X_\epsilon $ is dense in $X_\epsilon $. Applying Proposition \ref{propnonmfd} again, one can conclude that $\cup_{i=1}^{\infty} \partial B^h_{2^{-i}} \subset R_{n} (p_0 )  $. Then $\cup_{i=1}^{\infty} ( \partial B^h_{2^{-i}} \cap \partial X_\epsilon ) =\emptyset $, and hence $\partial X_\epsilon$ has an infinite number of connected components. Thus $\partial_{\rm DPG} X_\epsilon = X_\epsilon $. Similarly, $\partial_{\rm KM} X_\epsilon$ has an infinite number of connected components.
\qed


\begin{thebibliography}{BBEGZ16} 
{

\bibitem{bruenabersemola1} E. Bru\`e, A. Naber, D. Semola: 
\emph{Boundary regularity and stability for spaces with Ricci bounded below},
Invent. math. $\mathbf{228}$ (2022), 777-891.

\bibitem{chco3} J.~Cheeger, T. H. Colding:
\emph{On the structure of spaces with Ricci curvature bounded below. I},
J. Differential Geom. $\mathbf{46}$ (1997), 406–480.

\bibitem{chco5} J.~Cheeger, T. H. Colding:
\emph{On the structure of spaces with Ricci curvature bounded below. III},
J. Differential Geom. $\mathbf{54}$ (2000), 37–74.

\bibitem{jcwsjan1} J.~Cheeger, W.-S. Jiang, A.~Naber:
\emph{Rectifiability of singular sets of noncollapsed limit spaces with Ricci curvature bounded below},
Ann. of Math. $\mathbf{193}$ (2021), 407–538.

\bibitem{colding1} T. H. Colding:
\emph{Ricci Curvature and Volume Convergence},
Ann. of Math. $\mathbf{145}$ (1997), 477-501.

\bibitem{coldingnaber1} T. H. Colding, A.~Naber:
\emph{Sharp H¨older continuity of tangent cones for spaces with a lower Ricci curvature bound and applications},
Ann. of Math. $\mathbf{176}$ (2012), 1173–1229.

\bibitem{lina1} L. Chen: 
\emph{A remark on regular points of Ricci limit spaces},
Front. Math. China $\mathbf{11}$ (2016), 21-26.

\bibitem{dephi1} G. De Philippis, N. Gigli:
\emph{Non-collapsed spaces with Ricci curvature bounded from below},
J. Éc. Polytech. Math. $\mathbf{5}$ (2018), 613–650.

\bibitem{aha1} A.~Hatcher:
\emph{Algebraic topology},
Cambridge University Press, Cambridge, 2002.

\bibitem{honda1} S. Honda: 
\emph{On low-dimensional Ricci limit spaces},
Nagoya Math. J. $\mathbf{209}$ (2013), 1-22.

\bibitem{hnw1} E. Hupp, A. Naber, K.-H. Wang: 
\emph{Lower Ricci Curvature and Nonexistence of Manifold Structure},
preprint, arxiv: 2308.03909.

\bibitem{km1} V. Kapovitch, A. Mondino:
\emph{On the topology and the boundary of $N$-dimensional $\mathrm{RCD} (K,N)$ spaces},
Geom. Topol. $\mathbf{25}$ (2021) 445–495.

\bibitem{yl1} Y. Li:
\emph{Survey on the metric SYZ conjecture and non-Archimedean geometry},
INT. J. MOD. PHYS. A $\mathbf{37}$ (2022), 2230009.

\bibitem{ls1} A. Lytchak, S. Stadler:
\emph{Ricci curvature in dimension $2$},
J. Eur. Math. Soc. $\mathbf{25}$ (2023), 845–867.

\bibitem{menguy2} X. Menguy:
\emph{Noncollapsing examples with positive Ricci curvature and infinite topological type},
Geom. Funct. Anal. $\mathbf{10}$ (2000), 600–627.

\bibitem{naber1} A. Naber:
\emph{Conjectures and open questions on the structure and regularity of spaces with lower ricci curvature bounds},
SIGMA Symmetry Integrability Geom. Methods Appl. $\mathbf{16}$ (2020), 8 pp.

\bibitem{panwei1} J. Pan, G. Wei:
\emph{Examples of Ricci limit spaces with non-integer Hausdorff dimension},
Geom. Funct. Anal. $\mathbf{32}$ (2022), 676–685.

\bibitem{pp1} P.~Petersen:
\emph{Riemannian geometry},
3rd ed., Grad. Texts in Math, 171. Springer, Cham, 2016.

\bibitem{st1} M.~Simon, P. Topping:
\emph{Local mollification of Riemannian metrics using Ricci flow, and Ricci limit spaces},
 Geom. Topol. $\mathbf{25}$ (2021), 913–948.

}\end{thebibliography}
\end{document}